\newenvironment{myFStyle}{}{}
\newenvironment{myFunction}{
    \smallskip
    \begin{myFStyle}
        \RestyleAlgo{plain}
        \LinesNotNumbered
        \SetAlgoNoLine
        \begin{algorithm}[H]
        }{
        \end{algorithm}
    \end{myFStyle}
}
\newtheorem{lemma}{Lemma}
\newtheorem{proposition}{Proposition}
\newtheorem{theorem}{Theorem}
\newtheorem{corollary}{Corollary}
\newtheorem{claim}{Claim}
\newtheorem{question}{Question}
\theoremstyle{definition}
\newtheorem{remark}{Remark}
\newcommand{\mcg}{\mathscr G} 
\newcommand{\mcf}{\mathscr F} 
\newcommand{\NN}{\mathbb N} 
\DeclareMathOperator{\St}{St} 
\renewcommand{\L}{\text{L}} 
\newcommand{\conc}{\circ} 
\newcommand{\id}{\text{id}} 
\newcommand{\Ls}[2]{\text{L}_{#2}(#1)} 
\newcommand{\LDt}[2]{\text{L}_{#2}(#1)} 
\newcommand{\dir}{\text{dir}}
\newcommand{\dirU}{\text{dirU}}
\newcommand{\dirV}{\text{dirV}}
\newcommand{\lleft}{\text{1st}\xspace} 
\newcommand{\rright}{\text{2nd}\xspace} 
\newcommand{\st}{\text{St}\xspace} 
\newcommand{\med}{\text{Med}\xspace} 
\newcommand{\distance}{\text{Dist}\xspace} 
\newcommand{\rootS}{\text{Root}\xspace} 
\newcommand{\repD}{\text{Rep}\xspace} 
\newcommand{\conv}{\rm conv\xspace}	
\DeclareMathOperator{\dist}{d} 
\newcommand{\mproj}[2]{\text{Pr}{(#1,#2)}} 
\newcommand{\TODO}[1][]{%
    \begin{center}%
        \textcolor{red}{\textbf{...TODO}%
        \ifthenelse{\equal{#1}{\empty}}{}{\textit{(#1)}}%
        \textbf{...}}%
    \end{center}%
}
\newcommand{\dls}{\text{DLS}\xspace} 
\newcommand{\tc}{\textbf{(TC)}\xspace} 
\newcommand{\qc}{\textbf{(QC)}\xspace} 
\newcommand{\funcName}[1]{%
        \texttt{#1}%
    \xspace%
}
\newcommand{\procName}[3][]{%
    \ifthenelse{\equal{#1}{\empty}}{%
        \hyperref[#2]{%
            \textsc{#3}%
        }%
    }{
        \textsc{#3}%
    }\xspace%
}
\def\/{\discretionary{}{}{}} 
\newcommand{\+}{\_\allowbreak}
\newcommand{\distEncTree}{\funcName{Enc\+Tree}}
\newcommand{\distDecTree}{\funcName{Dist\+Tree}}
\newcommand{\encStarBri}{\funcName{Enc\+Star}}
\newcommand{\distEncBri}[1][]{\procName[#1]{alg_distencBri}{Enc\+Dist}}
\newcommand{\distDecBri}[1][]{\procName[#1]{alg_distdecBri}{Distance}}
\let\Input\KwData
\let\Output\KwResult
\definecolor{SeaGreen}{RGB}{46,139,87}
\definecolor{mygreen}{RGB}{0,180,0}
\title{%
    Distance labeling schemes for $K_4$-free bridged graphs%
    }
\author{Victor Chepoi, Arnaud Labourel, and S\'ebastien Ratel}
\date{}
\thanks{The short version of this paper appeared at SIROCCO 2020}
\begin{document}

\begin{abstract}
    $k$-Approximate distance labeling schemes are schemes that label the
    vertices of a graph with short labels in such a way that the
    $k$-approximation of the distance between any two vertices $u$ and $v$ can
    be determined efficiently by merely inspecting the labels of $u$ and $v$,
    without using any other information. One of the important problems is
    finding natural classes of graphs admitting exact or approximate distance
    labeling schemes with labels of polylogarithmic size.
    In this paper, we describe a $4$-approximate distance labeling scheme for
    the class of $K_4$-free bridged graphs. This scheme uses labels of
    poly-logarithmic length $O(\log n^3)$ allowing a constant decoding time.
    Given the labels of two vertices $u$ and $v$, the decoding function returns
    a value between the exact distance $d_G(u,v)$ and its quadruple $4d_G(u,v)$.
\end{abstract}

\maketitle

\begin{small}
    \centerline{Aix Marseille Univ, Université de Toulon, CNRS, LIS, Marseille,
    France}

    \centerline{\texttt{\{victor.chepoi, arnaud.labourel,
    sebastien.ratel\}@lis-lab.fr}}
\end{small}

\section{Introduction}

\subsection{Distance labeling schemes} 

A \emph{(distributed) labeling scheme} is a way of distributing the global 
representation of a graph over its vertices, by giving them some local 
information. The goal is  to be able to answer specific queries using only 
local knowledge. Peleg \cite{Peleg00} gave a wide survey of the importance of 
such localized data structures in distributed computing and of the types of 
queries based on them. Such queries can be of various types (see \cite{Peleg00}
for a comprehensive list), but adjacency, distance, or routing can be listed 
among the most fundamental ones. The quality of a labeling scheme is measured 
by the size of the labels of vertices and the time required to answer the 
queries. Adjacency, distance, and routing labeling schemes for general graphs 
need labels of linear size. Labels of size $O(\log n)$ or $O(\log^2 n)$ are
sufficient for such schemes on trees.
Finding natural classes of graphs admitting distributed labeling schemes with 
labels of polylogarithmic size  is an important and challenging problem.

In this paper we investigate distance labeling schemes. A \emph{distance 
labeling scheme} (\emph{\dls} for short) on a graph family $\mcg$ consists of 
an \emph{encoding} function $C_G : V \to \{0,1\}^*$ that gives binary labels to 
vertices of a graph $G \in \mcg$ and of a \emph{decoding} function $D_G : 
\{0,1\}^* \times \{0,1\}^* \to \NN$ that, given the labels of two vertices $u$ 
and $v$ of $G$, computes the distance $\dist_G(u,v)$ between $u$ and $v$ in 
$G$. For $k \in \NN^*$, we call a labeling scheme a \emph{$k$-approximate 
distance labeling scheme} if the decoding function computes an integer 
comprised between $\dist_G(u,v)$ and $k \cdot \dist_G(u,v)$.
Finding natural classes of graphs admitting exact or approximate distance 
labeling schemes with labels of polylogarithmic size is an important and 
challenging problem. In this paper we continue the line of research we started 
in \cite{ChLaRa_labeling_median} to investigate classes of graphs with rich 
metric properties, and we design approximate distance labeling schemes of 
polylogarithmic size for $K_4$-free bridged graphs.

\subsection{Related work}

By a result of Gavoille et al. \cite{GaPePeRa_distance}, the family of all 
graphs on $n$ vertices admits a distance labeling scheme with labels of $O(n)$ 
bits.
This scheme is asymptotically optimal because simple counting arguments on the 
number of $n$-vertex graphs show that $\Omega(n)$ bits are necessary. Another 
important result is that trees admit a \dls with labels of $O(\log^2 n)$ bits. 
Recently Freedman et al. \cite{FrGaNiWe_trees} obtained such a scheme allowing 
constant time distance queries. Several graph classes containing trees also 
admit \dls with labels of length $O(\log^2 n)$: bounded tree-width graphs
\cite{GaPePeRa_distance}, distance-hereditary graph \cite{GaPa_decomposition},
bounded clique-width graphs \cite{CoVa_bounded_cw}, or planar graphs of 
non-positive curvature \cite{ChDrVa_labeling}. More recently, in 
\cite{ChLaRa_labeling_median} we designed \dls with labels of length $O(\log^3 
n)$ for cube-free median graphs.
Other families of graphs have been considered such as interval graphs, 
permutation graphs, and their generalizations 
\cite{BaGa_data_structures,GaPa_interval_graphs} for which an optimal bound of
$\Theta(\log n)$ bits was given, and planar graphs for which there is a lower
bound of $\Omega(n^{\frac{1}{3}})$ bits \cite{GaPePeRa_distance} and an upper
bound of $O(\sqrt{n})$ bits \cite{GaUz_distance}. Other results concern 
approximate distance labeling schemes. 
For arbitrary graphs, Thorup and Zwick \cite{Thorup_approx_distance} proposed 
$(2k-1)$-approximate \dls, for each integer $k \geq 1$, with labels of size
$O(n^{1/k} \log^2 n)$.
In \cite{GaKaKaPaPe_approx}, it is proved that trees (and bounded tree-width 
graphs as well) admit $(1+1/\log n)$-approximate \dls with labels of size 
$O(\log n \log\log n)$, and this is tight in terms of label length and 
approximation.
They also designed $O(1)$-additive \dls with $O(\log^2 n)$-labels for several
families of graphs, including the graphs with bounded longest induced cycle,
and, more generally, the graphs of bounded tree–length. Interestingly, it is
easy to show that every exact \dls for these families of graphs needs labels
of $\Omega(n)$ bits in the worst-case \cite{GaKaKaPaPe_approx}.

\subsection{Bridged graphs}

Together with hyperbolic, median, and Helly graphs, bridged graphs constitute 
the most important classes of graphs in metric graph theory 
\cite{BaCh_survey,ChChHiOs}. They occurred in the investigation of graphs 
satisfying basic properties of classical Euclidean convexity:
{\it bridged graphs} are the graphs in which the neighborhoods of convex sets 
are convex and it was shown in  \cite{FaJa_convexity,SoCh_convexification} that
they are exactly the graphs in which all isometric cycles have length 3.
Bridged graphs represent a far-reaching generalization of chordal graphs.
From the point of view of structural graph theory, bridged graphs are quite 
general and universal: any graph $H$ not containing induced $C_4$ and $C_5$ may
occur as an induced subgraph  of a bridged graph. However, from the metric point
of view, bridged graphs have a rich structure. This structure was thoroughly 
studied and used in geometric group theory and in metric graph theory. 

The convexity of balls around convex sets and the uniqueness of geodesics 
between pairs of points are two basic properties not only of Euclidian or 
hyperbolic geometries but also of all CAT(0) geometries.  Introduced by Gromov 
in his seminal paper \cite{Gr}, CAT(0) (alias nonpositively curved) geodesic 
metric spaces are fundamental objects of study in metric geometry and geometric 
group theory.
Graphs with strong metric properties often arise as 1-skeletons of CAT(0) cell 
complexes. Gromov \cite{Gr} gave a nice combinatorial local-to-global 
characterization of CAT(0) cube complexes. 
Based on this result, Chepoi \cite{Ch_CAT} established a bijection between the
1-skeletons of CAT(0) cube complexes and the median graphs, well-known in 
metric graph theory \cite{BaCh_survey}. A similar characterization of CAT(0) 
simplicial complexes with regular Euclidean simplices as cells seems 
impossible. Nevertheless, Chepoi \cite{Ch_CAT} characterized the simplicial 
complexes having bridged graphs as 1-skeletons as the simply connected 
simplicial complexes in which the  neighborhoods of vertices do not contain 
induced 4- and 5-cycles.  Januszkiewicz and Swiatkowski \cite{JaSw}, Haglund 
\cite{Haglund},  and Wise \cite{Wise} rediscovered this class of simplicial 
complexes, called them {\it systolic complexes},  and used them in the context 
of geometric group theory.  Systolic complexes are contractible 
\cite{Ch_CAT,JaSw} and they are considered as  good combinatorial analogs of
CAT(0) metric spaces. One of the main results of \cite{JaSw} is that systolic 
groups (i.e., groups acting geometrically on systolic complexes) have the 
strong property of biautomaticity, which means that their Cayley graphs admit 
families of paths which define a regular language. The papers 
\cite{ChepoiOsajda, Elsner2009-isometries, Haglund, HoOs, HuOs_ms, HuOs_Artin, 
JaSw, JanuszkiewiczSwiatkowski2007, OsaPrzy2009, Prytula2017, Pr3,Wise}
represent a small sample of papers on systolic complexes and of groups acting 
on them.  Bridged graphs have also been investigated in several 
graph-theoretical  papers; cf. \cite{AnFa, BaCh_Helly, Ch_bridged, Po_bridged1, 
Po_bridged2} and the survey \cite{BaCh_survey}. In particular, in 
\cite{AnFa,Ch_bridged} was shown that bridged graphs are dismantlable (a 
property stronger that contractibility of clique complexes), which implies that 
bridged graphs are cop-win.
At the difference of median graphs (which occur as domains of event structures 
in concurrency, as solution sets of 2-SAT formulas in complexity, and as 
configuration spaces in robotics),  bridged graphs have not been extensively 
studied or used in full generality in Theoretical Computer Science. Notice 
however the papers \cite{ChDrVa_SODA,ChFaVa}, where linear time algorithms for 
diameter, center, and median problems were designed for planar bridged  graphs 
(called \emph{trigraphs}), i.e., planar graphs in which all inner faces are 
triangles and all inner vertices have degrees $\ge 6$. The trigraphs were 
introduced in \cite{BaCh_dwmg} as building stones in the decomposition theorem 
of weakly median graphs. The papers \cite{ChDrVa_wn,ChDrVa2005} designed for 
trigraphs exact distance and routing labeling schemes with labels of 
$O(\log^2n)$ bits.

A {\it $K_4$-free bridged graph} is a bridged graph not containing 4-cliques.  
The triangular grid is the simplest example of a trigraph and any trigraph is a 
$K_4$-free bridged graph. In Fig. \ref{fig_K4-free_bridged_graphs} we present
two examples of $K_4$-free bridged graphs, which are not trigraphs. 
Since the clique-number of a bridged graph $G$ is equal to the topological dimension of its clique complex plus one,
$K_4$-free bridged graphs are exactly the 1-skeletons of two-dimensional systolic complexes. Such complexes have been investigated
in geometric group theory in the papers \cite{GeSh,HoOs}. From the point of view of structural graph theory, $K_4$-free bridged graphs are quite
general: any graph of girth $\ge 6$ may occur in the neighborhood of a vertex of a
$K_4$-free bridged graph (and any graph not containing induced $C_4$ and $C_5$ may occur in the neighborhood of a vertex of a bridged graph). Weetman \cite{Weetman} described a nice construction of (infinite) graphs in which
the neighborhoods of all vertices are isomorphic to a prescribed finite graph of girth $\ge 6$.  From the local-to-global characterization of bridged graphs of
 \cite{Ch_CAT} it follows that the resulting graphs are $K_4$-free bridged graphs. Note also that $K_4$-free bridged graphs may contain any complete graph $K_n$ as a minor.
 To see this, it suffices to glue together $\frac{n(n-1)}{2}$ copies of equilateral triangles with enough large but identical side (say, side $2n$) of the triangular grid as we did
 in the left part of Fig. \ref{fig_K4-free_bridged_graphs} that contains $K_6$ as a minor (subdivision of $K_6$ indicated by edges in blue).

\subsection{Our results} We continue with the main result of our paper: 

\begin{theorem} \label{thm_dist_labeling_bridged}
The class $\mcg$ of $K_4$-free bridged graphs on $n$ vertices admits a
$4$-approximate distance labeling scheme using labels of $O(\log^3 n)$
bits. These labels are constructed in polynomial $O(n^2 \log n)$ time and
can be decoded in constant time, assuming that the distance matrix of $G$ is
provided.
\end{theorem}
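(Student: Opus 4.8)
The plan is to adapt to $K_4$-free bridged graphs the recursive-separator technique we used for cube-free median graphs in \cite{ChLaRa_labeling_median}. I would build a hierarchical decomposition of $G$ of depth $O(\log n)$ in which each piece is split by a convex separator that is metrically simple, and define the label of a vertex $v$ to be the concatenation, over the separators $S_1,\dots,S_t$ ($t=O(\log n)$) on the root-to-leaf branch of $v$ in this decomposition, of two items per level: (i) $O(\log n)$ bits locating $v$ with respect to $S_i$ (the subpiece it falls into, or the fact that $v\in S_i$); and (ii) an $O(\log^2 n)$-bit record consisting of the distance $\dist_G(v,S_i)$ together with the label, in the internal distance scheme of $S_i$, of one representative of the imprint $I_{S_i}(v)=\{s\in S_i:\dist_G(v,s)=\dist_G(v,S_i)\}$. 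Summing over the levels gives $O(\log^3 n)$ bits per vertex.

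The first and principal task is the structural toolkit for $K_4$-free bridged graphs, that is, the $1$-skeletons of two-dimensional systolic complexes. Beyond the defining fact that balls around convex sets are convex (equivalently, isometric cycles are triangles), I would need three ingredients: (a) a description of the star $\St(x)$ of a vertex as a union of triangles and edges glued along $x$ whose link has girth $\ge 6$, from which one extracts that the separators used below are essentially one-dimensional objects --- trees of triangles, thin neighborhoods of geodesics, or BFS-spheres of bounded tree-width --- and hence carry $O(\log^2 n)$-bit internal distance schemes by reduction to the classical tree scheme; (b) a balanced-separator statement, namely that every $n$-vertex $K_4$-free bridged graph admits a convex separator $S$ of this simple type with all components of $G-S$ of size $\le n/2$, and with the recursive pieces again inside the class; and (c) a projection lemma bounding, for a convex set $C$, a vertex $u$ with $\dist_G(u,C)=a$, and an arbitrary $c\in C$, the difference between $\dist_G(u,c)$ and $a+\dist_C(c,I_C(u))$ by a quantity governed by $\operatorname{diam}(I_C(u))$, which thin-triangle arguments bound by $O(a)$. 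Ingredient (c) is precisely what makes the scheme approximate rather than exact: unlike the $\Theta$-classes of a median graph, an imprint here need not be a single gate, and we keep only one vertex of it.

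Granting the toolkit, the scheme is assembled by recursion --- take the separator $S_1$ of $G$, recurse on the components of $G-S_1$, and stop at singletons --- yielding the decomposition tree of depth $O(\log n)$. For decoding, from $L(u)$ and $L(v)$ one reads off the least level $i^\ast$ at which $u$ and $v$ fall into different components of $G-S_{i^\ast}$, or at which one of them lies in $S_{i^\ast}$ (using standard word-level bit manipulations to locate $i^\ast$). Every $u$--$v$ geodesic then meets $S_{i^\ast}$, so by the projection lemma $\dist_G(u,v)$ equals $\dist_G(u,S_{i^\ast})+\dist_G(v,S_{i^\ast})+\dist_{S_{i^\ast}}\big(I_{S_{i^\ast}}(u),I_{S_{i^\ast}}(v)\big)$ up to a bounded additive term, and the decoder returns $\dist_G(u,S_{i^\ast})+\dist_G(v,S_{i^\ast})+\widetilde d_{S_{i^\ast}}(r_u,r_v)$, where $r_u,r_v$ are the two stored imprint representatives and $\widetilde d_{S_{i^\ast}}(r_u,r_v)$ is computed from their internal labels. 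This never underestimates $\dist_G(u,v)$ and overestimates it by at most $\operatorname{diam}(I_{S_{i^\ast}}(u))+\operatorname{diam}(I_{S_{i^\ast}}(v))$ plus the additive term of the lemma; since both $\dist_G(u,S_{i^\ast})$ and $\dist_G(v,S_{i^\ast})$ are at most $\dist_G(u,v)$, this overestimate is a constant multiple of $\dist_G(u,v)$, and a short case analysis according to the type of $S_{i^\ast}$ (a one-dimensional versus a two-dimensional piece, and whether one of $u,v$ lies on it) together with a careful choice of the balance threshold tightens the constant to exactly $4$. For complexity: given the distance matrix, computing one balanced separator, all imprints, and all internal labels at a single recursion level costs $O(n^2)$, so over the $O(\log n)$ levels the labels are produced in $O(n^2\log n)$ time, while decoding is a constant number of table lookups and arithmetic operations on $O(\log^2 n)$-bit fields, hence $O(1)$.

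The step I expect to be the main obstacle is this toolkit: isolating a family of convex separators that are simultaneously metrically simple enough to carry $O(\log^2 n)$-bit internal distance schemes and projection-friendly enough that distances to them from the outside are recoverable, up to a constant factor, from $O(\log^2 n)$ bits, and proving that such separators are balanced and leave the recursive pieces inside the class --- all of which rests on the two-dimensionality of the systolic complex. Once the per-level behaviour is settled, propagating the errors through the recursion so that the final ratio is exactly $4$, rather than a larger constant, is the remaining quantitative subtlety.
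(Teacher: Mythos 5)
Your high-level plan — an $O(\log n)$-depth recursive decomposition, a metrically simple ``central'' object per level equipped with a tree-like internal scheme, and a per-level record of a distance plus a representative label — is in the same spirit as the paper, and the overall size and time bounds would follow as you predict. But three of the specifics on which you rely do not hold for this class, and one key idea is missing.

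First, the convex balanced separator you posit is not what is available. The paper does not separate $G$ by a convex set; it partitions $V(G)$ into \emph{fibers} of the star $\St(m)$ of a median vertex $m$ (panels and cones). These fibers are isometric bridged subgraphs of $G$ but are explicitly \emph{not convex}, and the star $\St(m)$ itself need not be convex either. Your plan to recurse on the components of $G-S$ breaks down here: components of $G-\St(m)$ need not be isometric in $G$, so the recursive pieces would not ``again be inside the class'' and distances measured inside a piece could shrink relative to $G$. The paper sidesteps this by recursing on the fibers themselves, whose isometry and bridgedness are nontrivial results (Lemma~\ref{lem_bridged_and_starshaped_partitionning}). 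Likewise, the boundaries used to cross between fibers are not convex trees but \emph{starshaped trees} (Lemma~\ref{lem_total_boundary_starshaped_tree}), and these are not isometric, which is the actual source of the approximation.

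Second, storing one imprint representative per level is insufficient and, more to the point, is not where the difficulty lies. A panel may have a \emph{linear} number of $1$-neighboring cones, hence a linear number of boundaries, so naively recording boundary information per boundary would blow up the label. The paper's central contribution is Lemma~\ref{lem_exits} (the ``two exits'' lemma): for a vertex $u$ in a panel with total boundary $T$, two canonically chosen vertices $u_1,u_2$ (read off from the tripod structure of $I(u,x)\cap T$ of Lemma~\ref{lem_2_increasing_paths}) give, for every $v\in T$, $\min_i\{\dist_G(u,u_i)+\dist_T(u_i,v)\}\le 2\,\dist_G(u,v)$. Your sketch records only a nearest imprint vertex, and there is no argument that routing via a single imprint vertex bounds the detour by a constant factor independent of the number of boundaries. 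Without this the label either grows linearly or the approximation ratio is uncontrolled.

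Third, your decoder always returns $\dist_G(u,S)+\dist_G(v,S)+\widetilde d_S(r_u,r_v)$. The paper's decoder is case-driven: for ``separated'' and ``almost separated'' pairs it returns $\dist_G(u,m)+\dist_G(m,v)$ and proves a factor $\le 2$ (Lemmas~\ref{lem_separated_vertices_dim2_bridged},~\ref{lem_quasi-separated_vertices_dim2_bridged}); only for $1$pc- and $2$cc-neighboring pairs does it route through the boundary tree, using exits and entrances, and proving the factor $4$ there (Lemmas~\ref{lem_dist_1pc-neighboring},~\ref{lem_dist_2cc-neighboring}) needs a genuine argument — a quasi-median computation together with the Gauss--Bonnet bigon argument of Claim~\ref{claim_diamond_bridged}. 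Calling this a ``short case analysis'' that ``tightens the constant to exactly $4$'' understates the content: it is the bulk of the paper, and the constant $4$ comes out of it rather than from a tuning of a balance threshold. In short, the separator framing, the single-imprint record, and the uniform decoding formula each conceal a real gap; the missing structural idea is the two-exits property of Lemma~\ref{lem_exits}.
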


The remaining part of this paper is organized in the following way. The main ideas of our distance labeling scheme
are informally described in Section \ref{sect_main_ideas_dim2_bridged}. Section
\ref{sect_prelim} introduces the notions used in this paper.
The next three Sections \ref{sect_properties_dim2_bridged}, \ref{sect_stars_in_dim2_bridged}, and \ref{sect_boundaries}
present the most important
geometric and structural properties of $K_4$-free graphs, which are the
essence of our distance labeling scheme.  In particular, we describe a partition of vertices of $G$ defined by the star of a median vertex.
In Section \ref{sect_classif_vertices_dim2_bridged} we characterize the pairs of vertices connected by a shortest
path containing the center of this star. The distance labeling scheme and its 
performances are described in Section \ref{sect_dist_labeling_dim2_bridged}.

\begin{figure}[htb]
    \centering
    \begin{minipage}{0.49\linewidth}
        \centering
        \includegraphics[width=0.5\linewidth]{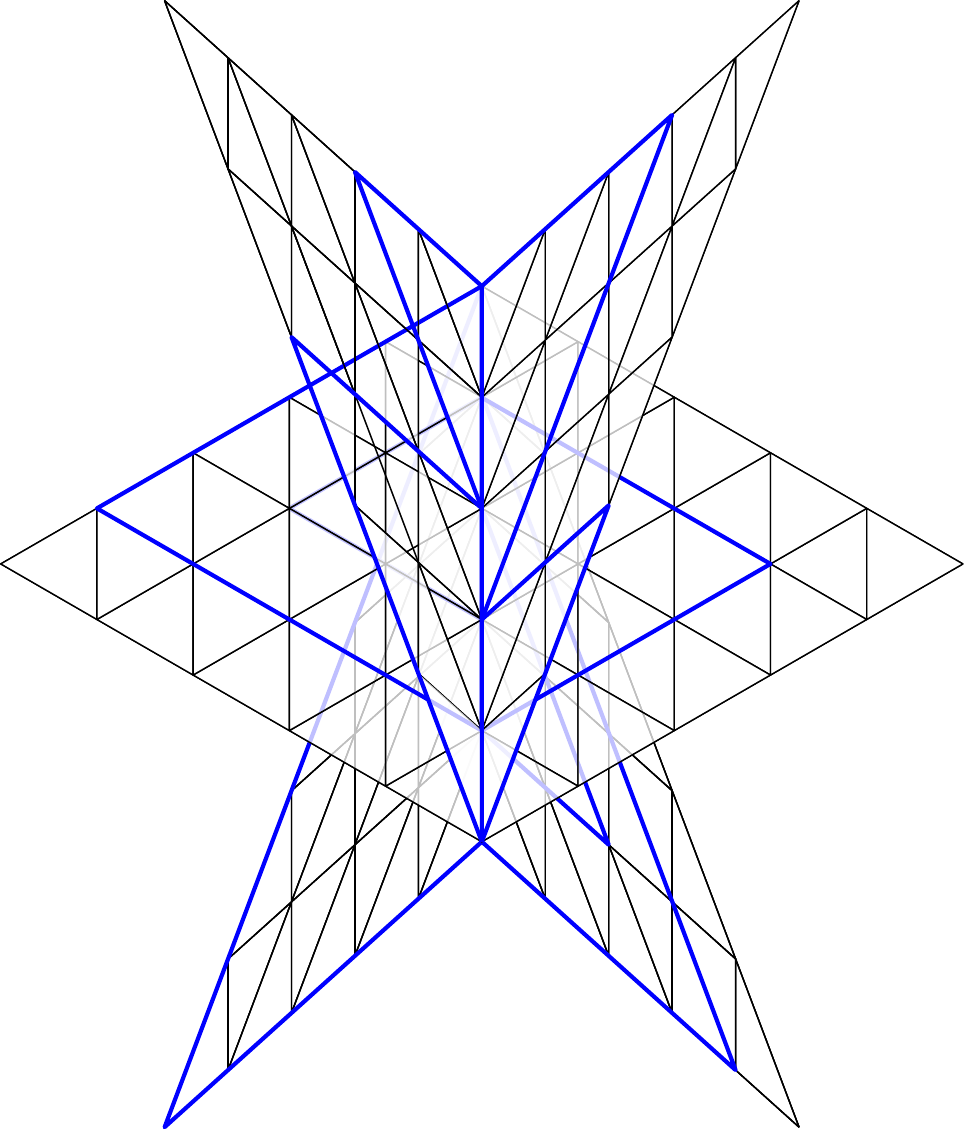}
    \end{minipage}
    \begin{minipage}{0.49\linewidth}
        \centering
        \includegraphics[width=0.8\linewidth]{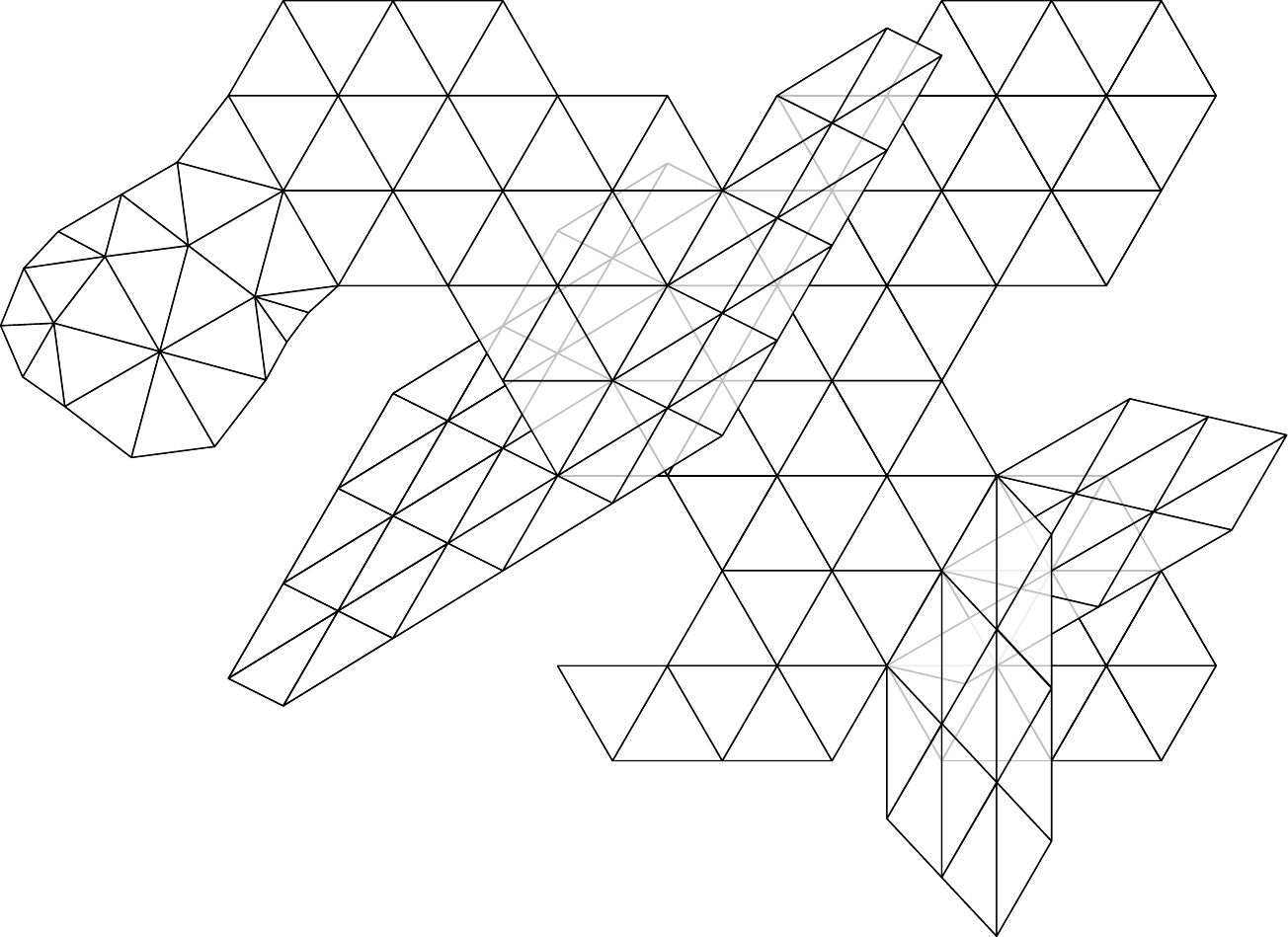}
    \end{minipage}
    \caption{
        \label{fig_K4-free_bridged_graphs}
        Examples of $K_4$-free bridged graphs.
    }
\end{figure}

\section{Main ideas of the scheme}
\label{sect_main_ideas_dim2_bridged}

The global structure of our distance labeling scheme for $K_4$-free
bridged graphs is similar to the one
described in \cite{ChLaRa_labeling_median} for cube-free median graphs.
Namely, the scheme is based on a recursive partitioning of the graph into a
star and its fibers (which are classified as panels and cones).
However, the stars and the fibers of $K_4$-free bridged graphs have completely
different structural and metric properties from those of cube-free median
graphs. Therefore, the technical tools  are completely different from those
used in \cite{ChLaRa_labeling_median}.

Let $G = (V,E)$ be a $K_4$-free bridged graph with $n$ vertices. The encoding
algorithm first searches for a \emph{median vertex} $m$ of $G$, i.e., a vertex
minimizing $x \mapsto \sum_{v \in V} \dist_G(x,v)$. It then computes a particular
$2$-neighborhood of $m$ that we call the \emph{star}
$\St(m)$ of $m$ such that  
every vertex of $G$ is assigned to a unique vertex of $\St(m)$. This
allows us to define the fibers of $\St(m)$: for a vertex $x \in \St(m)$, the
\emph{fiber} $F(x)$ of $x$ corresponds to the set of all the vertices of $G$
associated to $x$. The set of all  fibers of $\St(m)$ defines a partitioning of
$G$. Moreover, choosing $m$ as a median vertex ensures that every fiber contains at
most half of the vertices of $G$. Up to this point the scheme is the same as in
\cite{ChLaRa_labeling_median}, but here come the first differences. Namely, the
fibers are not convex, nevertheless, they
are connected and isometric, and thus induce bridged subgraphs of $G$.
Consequently, we can apply  recursively the partitioning to every
fiber, without accumulating errors on distances at each step. Finally, we study
the \emph{boundary}  and the \emph{total boundary} of each fiber, i.e., respectively,
the set of all vertices of a fiber having a
neighbor in another fiber and the union of all boundaries of a
fiber. We will see that those boundaries do not induce actual trees but something
close that we call ``starshaped trees''.
Unfortunately, these starshaped trees are not isometric. This explains why we obtain
an approximate and not an exact distance labeling scheme.
Indeed, distances computed in the total boundary can be twice as much as the
distances in the graph.

We distinguish two types of fibers $F(x)$ depending on the distance between $x$
and $m$: \emph{panels} are fibers leaving from a neighbor $x$ of $m$; \emph{cones} are fibers
associated to a vertex $x$ at distance $2$ from $m$.
One of our main results towards obtaining  a compact labeling scheme establishes that a
vertex in a panel admits two ``exit'' vertices on the total boundary of this
panel and that a vertex in a cone admits one ``entrance'' vertex on each boundary of
the cone (and it appears that every cone has exactly two boundaries).
The median vertex $m$ or those ``entrances'' and ``exits'' of a vertex $u$ on a
fiber $F(x)$ are guaranteed to lie on a path of length at most four times a
shortest $(u,v)$-path for any vertex $v$ outside $F(x)$. 
At each recursive step, every vertex $u$ has to store
information relative only to three vertices ($m$, and the two ``entrances'' or
``exits'' of $u$).
Since a panel can have a linear number of boundaries, without this main
property, our scheme would use labels of linear length because a vertex in a
panel could have to store information relative to each boundary to allow to
compute distances with constant (multiplicative) error. Since we allow a multiplicative
error of $4$ at most, we will see that in
almost all cases, we can return the length of a shortest $(u,v)$-path passing
through the center $m$ of the star $\St(m)$ of the partitioning at some
recursive step. 
Lemmas \ref{lem_separated_vertices_dim2_bridged} and
\ref{lem_quasi-separated_vertices_dim2_bridged} indicate when this
length corresponds to the exact distance between $u$ and $v$ and when it is an
approximation of this distance. 
The case where $u$ and $v$ belong to distinct fibers that are ``too close'' are
more technical and are the one leading to a multiplicative error of 4.

\section{Preliminaries}
\label{sect_prelim}

\subsection{General notions} All graphs $G = (V,E)$ in this note are finite, undirected, simple, and
connected. We write $u\sim v$ if two vertices $u$ and $v$ are adjacent.
For a subset $A$ of vertices of $G$, we denote by $G[A]$ the subgraph of $G$
induced by $A$.
The \emph{distance} $d_G(u,v)$ between $u$ and $v$ is the length of
a shortest $(u,v)$-path in $G$. For a subset $A$ of $V$ and for two vertices
$u, v \in A$, we denote by $\dist_A(u,v)$ the distance
$\dist_{G[A]}(u,v)$. The \emph{interval} $I(u,v)$ between $u$
and $v$ consists of all the vertices on shortest $(u,v)$--paths. In other
words, $I(u,v)$ denotes all the vertices (metrically) \emph{between} $u$ and
$v$: $I(u,v):=\{ w \in V: \dist_G(u,w) + \dist_G(w,v) = \dist_G(u,v)\}$.
Let $H=(V',E')$ be a subgraph of $G$. Then $H$ is called  \emph{convex} if
$I(u,v) \subseteq H$ for any two vertices $u,v$ of $H$.
The {\it convex hull} of a subgraph $H$ of $G$ is the smallest convex subgraph $\conv(H)$ containing $H$.
A connected subgraph $H$ of $G$ is called \emph{isometric}
if $d_H(u,v)=d_G(u,v)$ for any vertices $u,v$ of $H$; if in addition $H$ is a cycle of $G$,
we call $H$ an {\it isometric cycle}.
For a vertex $x$ and a set of vertices $A \subseteq V$, let $\dist_G(x,A) :=
\min \{ \dist_G(x,a) : a \in A \}$ be the \emph{distance for $x$ to $A$}.
The \emph{metric projection} of a vertex $x \in V$ on a set $A \subseteq V$ (or
on $G[A]$)
is the set $\mproj{x}{A} := \{ y \in A : \dist_G(x,y) = \dist_G(x,A) \}$.
The \emph{neighborhood of  $A$} in $G$
is the set $N[A]:= A\cup \{ v \in V\setminus A : \exists u\in A, v\sim u\}$.
The \emph{ball} of radius $k$
centered at $A$ is the set $B_k(A):=\{ v: d_G(v,A)\le k\}$.
When $A$ is a singleton $a$,
then $N[a]$ is the closed neighborhood of $a$ and $B_k(a) := B_k(A)$. Note that $B_1(a)=N[a]$.
The \emph{sphere} of radius $k$
centered at $a$ is the set $S_k(A):=\{ v: d_G(v,A)= k\}$.

\subsection{Bridged graphs and metric triangles}
A graph $G$ is \emph{bridged} if any isometric cycle of $G$ has length 3. As
shown in  \cite{FaJa_convexity,SoCh_convexification},
bridged graphs are characterized by one of the fundamental properties of
CAT(0) spaces: \textit{the neighborhoods of all convex subgraphs of a
bridged graph are convex}. Consequently, balls in bridged graphs are convex.
Bridged graphs constitute an important subclass of weakly modular graphs: a
graph family that unifies numerous interesting classes of metric graph theory
through ``local-to-global'' characterizations \cite{ChChHiOs}.
\emph{Weakly modular graphs} are the graphs satisfying the
following \emph{quadrangle \qc} and \emph{triangle \tc conditions} \cite{BaCh_Helly,Chepoi_metric_triangles}:
\begin{quote}
    \qc $\forall u, v, w, z \in V$ with $k := \dist_G(u, v) = \dist_G(u, w)$,
    $\dist_G(u, z) = k + 1$, and $vz, wz \in E$, $\exists x \in V$ s.t.
    $\dist_G(u,x) = k - 1$ and $xv, xw \in E$.
    \\[1ex]
    \tc $\forall u,v,w \in V$ with $k := \dist_G(u,v) = \dist_G(u,w)$, and $vw
    \in E$, $\exists x \in V$ s.t. $\dist_G(u,x) = k - 1$ and $xv, xw \in E$.
\end{quote}
Bridged graphs are \emph{exactly the weakly modular graphs with no induced
cycle of length $4$ or $5$} \cite{Chepoi_metric_triangles}. Observe that, since
bridged graphs do not contain induced $4$-cycles, the quadrangle condition is
implied by the triangle condition.

A \emph{metric triangle} $u_1u_2u_3$ of a  graph $G = (V,E)$ is a
triplet $u_1,u_2,u_3$ of vertices such that for every $(i,j,\ell) \in
\{1,2,3\}^3$, $I(u_i,u_j) \cap I(u_j,
u_\ell) = \{u_j\}$ \cite{Chepoi_metric_triangles}. A metric triangle
$u_1u_2u_3$ is \emph{equilateral of size $k$} if
$\dist_G(u_1,u_2) = \dist_G(u_2,u_3) = \dist_G(u_1,u_3) = k$. If $k = 0$, then
the metric triangle consists of a single vertex $u_1=u_2=u_3$, and if $k = 1$
then the vertices $u_1$, $u_2$ and $u_3$ are pairwise adjacent.
A metric triangle $u_1u_2u_3$ is \emph{strongly equilateral} if any $x\in
I(u_1,u_2)$, the equality $\dist_G(u_3,x) = \dist_G(u_1,u_2)$ holds.
 Weakly modular graphs can be characterized via metric triangles in the
 following way :

\begin{proposition}\cite{Chepoi_metric_triangles}
    \label{metric_triangles}
    A graph $G$ is weakly modular if and only if any metric triangle of $G$ is
    strongly equilateral.
\end{proposition}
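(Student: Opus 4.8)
The plan is to establish Proposition~\ref{metric_triangles}, i.e., the equivalence between weak modularity and the property that every metric triangle is strongly equilateral. Since the statement is an ``if and only if,'' I would prove the two implications separately, with the forward direction (weakly modular $\Rightarrow$ metric triangles strongly equilateral) being the substantive one.

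\textbf{The ``only if'' direction.} Assume $G$ is weakly modular and let $u_1u_2u_3$ be a metric triangle. First I would show it is equilateral. Suppose not, say $k := \dist_G(u_1,u_2)$ is minimal among the three side lengths and $\dist_G(u_1,u_3) > k$ or $\dist_G(u_2,u_3) > k$. Pick a vertex $z \in I(u_1, u_2)$ adjacent to $u_1$ (or work near whichever corner is convenient); the idea is to use the triangle/quadrangle conditions to ``pull'' a vertex into the intersection of two intervals that is supposed to be a single corner, contradicting the defining property $I(u_i,u_j) \cap I(u_j,u_\ell) = \{u_j\}$. Concretely, if the triangle is not equilateral one can find adjacent vertices on the boundary that are equidistant from the opposite corner but closer to it than the side length suggests, then invoke \tc (or \qc, which is implied by \tc here since $G$ has no induced $C_4$) to produce a common neighbor strictly closer to the opposite corner; iterating this descent produces a vertex in $I(u_i,u_j)\cap I(u_j,u_\ell)$ other than $u_j$. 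For strong equilaterality, take any $x \in I(u_1,u_2)$; I must show $\dist_G(u_3,x) = k$. Since $\dist_G(u_3,u_1) = \dist_G(u_3,u_2) = k$ and $\dist_G(u_1,x) + \dist_G(x,u_2) = k$, the vertex $x$ lies ``between'' $u_1$ and $u_2$, and a quadrangle/triangle-condition argument (descending from $u_3$ toward the $u_1$--$u_2$ side, or using convexity-type consequences of weak modularity) forces $\dist_G(u_3, x) \geq k$; the reverse inequality would again, if strict below $k$, let us find a common neighbor contradicting that $u_1u_2u_3$ is a metric triangle. The bookkeeping of which condition applies at which step — matching the hypotheses $k = \dist_G(u,v) = \dist_G(u,w)$, $\dist_G(u,z) = k+1$, $vz, wz \in E$ — is the part that requires care.

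\textbf{The ``if'' direction.} Conversely, assume every metric triangle of $G$ is strongly equilateral; I want \tc (which, with the no-$C_4$ consequence, also gives \qc, but I should not assume no-$C_4$ here, so I would verify both \qc and \tc directly). Given $u, v, w$ with $k := \dist_G(u,v) = \dist_G(u,w)$ and $vw \in E$, I would consider the quasi-median / metric triangle spanned by $u, v, w$: contract $v$ and $w$ toward $u$ along geodesics until reaching vertices $v', w'$ forming, together with a suitable projection of $u$, a metric triangle. If that metric triangle has size $0$ or $1$, unwinding the contraction yields the desired common neighbor $x$ of $v$ and $w$ at distance $k-1$ from $u$. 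If it had size $\geq 2$, strong equilaterality would be contradicted because along one geodesic side there would be a vertex failing the distance-$k$ condition — this is where the hypothesis is used. The analogous argument with an extra vertex $z$ handles \qc.

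\textbf{Main obstacle.} The delicate point is the descent argument in the ``only if'' direction: correctly setting up, at each step, a configuration $(u, v, w, z)$ or $(u, v, w)$ that exactly matches the hypotheses of \qc or \tc, and arguing that the process terminates at a genuine violation of the metric-triangle condition rather than stalling. Getting the equidistance bookkeeping right (which corner is the apex, which two vertices are the adjacent pair) and ensuring the newly produced vertex actually lands in $I(u_i,u_j) \cap I(u_j, u_\ell)\setminus\{u_j\}$ is the crux; everything else is a routine interval-distance calculation.
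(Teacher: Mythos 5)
The paper offers no proof of Proposition~\ref{metric_triangles}: it is cited from \cite{Chepoi_metric_triangles} as a known result, so there is no in-paper argument to compare against. Taking your sketch on its own terms, the ``if'' direction via quasi-medians is essentially correct. For \tc, the quasi-median $u'v'w'$ of $u,v,w$ satisfies $\dist_G(v,v')+\dist_G(v',w')+\dist_G(w',w)=\dist_G(v,w)=1$; the hypothesis makes $u'v'w'$ equilateral, so its size is $0$ or $1$, and size $0$ is impossible (it would place $v$ on a $(u,w)$-geodesic, giving $\dist_G(u,w)=k+1$), whence $v'=v$, $w'=w$, and $u'$ is the desired common neighbor at distance $k-1$ from $u$. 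For \qc the same accounting with $\dist_G(v,w)=2$ bounds the size by $2$, and size $2$ is excluded using \emph{strong} equilaterality at the midpoint $z$ (one gets $\dist_G(u,z)\le\dist_G(u,u')+\dist_G(u',z)=k$, contradicting $\dist_G(u,z)=k+1$); sizes $0,1$ unwind as before. You should be explicit that it is the strong form, not mere equilaterality, that rules out the size-$2$ case.

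The ``only if'' direction is not a proof; it is an accurate description of the \emph{shape} of a proof with the content left out. The descent you invoke — that a failure of strong equilaterality lets \tc/\qc manufacture a vertex in $I(u_i,u_j)\cap I(u_j,u_\ell)\setminus\{u_j\}$ — is a genuine leveling lemma about the behavior of $\dist_G(u_1,\cdot)$ along a $(u_2,u_3)$-geodesic, and contrary to your closing sentence it is not ``a routine interval-distance calculation'': one must argue that a local drop and a local rise of this function each lead, after finitely many \tc/\qc applications and geodesic replacements, to the forbidden intersection, and that bookkeeping is where the theorem lives. You flag this yourself as the ``main obstacle,'' which is honest, but it leaves the argument open at precisely the place it needs to be closed. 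Separately, the parenthetical ``\qc, which is implied by \tc here since $G$ has no induced $C_4$'' is out of place: the proposition is about arbitrary weakly modular graphs, which may contain induced $C_4$ (the $4$-cycle itself is weakly modular), and in the forward direction you already have both \tc and \qc as hypotheses.
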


In particular, every metric triangle of a weakly modular graph is equilateral.
A metric triangle $u_1'u_2'u_3'$ is called a {\it quasi-median} of a triplet
$u_1,u_2,u_3$ if for each pair
$1\le i<j\le 3$ there exists a shortest $(u_i,u_j)$-path passing via $u_i'$ and
$u_j'$. Each triplet  $u_1,u_2,u_3$ of vertices of any graph $G$
admits at least one
quasi-median: it suffices to take as $u_1'$ a furthest from $u_1$ vertex from
$I(u_1,u_2)\cap I(u_1,u_3)$, as $u_2'$ a furthest from $u_2$ vertex from
$I(u_2,u_1') \cap I(u_2,u_3)$, and as $u_3'$ a furthest from $u_3$ vertex from
$I(u_3,u_1')\cap I(u_3,u_2')$.

\subsection{Gauss-Bonet formula}
We conclude this section with the classical  Gauss-Bonnet formula, which will 
be useful in some our proofs. Let $G = (V,E)$ be a plane graph and let 
$\partial G$ denote the cycle
delimiting its outer face. We view the inner faces of $G$  of length $k$ as regular $k$-gons of the Euclidean plane;
each of their angles must be equal to $\frac{k-2}{k} \pi$. For all vertices $v$ of  $G$, let $\alpha(v)$ denote the
sum of the angles of the inner faces of $G$ containing $v$.  In other words, if  $\mathscr C(v)$ denotes the set of all inner faces
of $G$ containing $v$, then
$$
\alpha(v) := \sum_{C \in \mathscr C(v)} \frac{|V(C)| - 2}{|V(C)|} \pi.
$$
For all $v  \in \partial G$, we set $\tau(v) := \pi - \alpha(v)$, and for all inner vertices
$v$ of $G$ we set $\kappa(v) := 2\pi - \alpha(v)$.
The parameters $\kappa(v)$ and $\tau(v)$ measure the ``defect'' of
the angles around $v$ (i.e., the gap between the actual value $\alpha(v)$ of
the angles around $v$, and the value $\pi$ or $2\pi$ that should be the correct
one if the polygons were ``really embeddable'' in the Euclidean plane).
A discrete version of Gauss-Bonnet's Theorem (see \cite{LySc_group_theory})
establishes the following formula (an example is given on Fig. \ref{fig_GB}):

\begin{theorem}[Gauss-Bonnet]
    \label{thm_Gauss-Bonnet}
    Let $G = (V,E)$ be a planar graph. Then,
    $$
    \sum_{v \in \partial G} \tau(v) + \sum_{v \in V \setminus \partial
        G} \kappa(v) = 2\pi.
    $$
\end{theorem}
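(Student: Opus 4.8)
The plan is to derive the identity from Euler's formula by double counting vertex--face and edge--face incidences. I will use the standing assumptions of the surrounding text: $G$ is a connected plane graph, $\partial G$ is the cycle bounding the outer face, and each inner face of length $k$ is a regular $k$-gon with all angles equal to $\frac{k-2}{k}\pi$. Write $\partial V := V(\partial G)$ for the boundary vertices, $V_{\mathrm{int}} := V \setminus \partial V$ for the inner vertices, $\partial E$ for the edges of $\partial G$, $E_{\mathrm{int}} := E \setminus \partial E$ for the inner edges, and let $f$ be the number of inner faces of $G$. Since $\partial G$ is a cycle, $|\partial E| = |\partial V|$; moreover every inner edge borders exactly two inner faces and every boundary edge borders exactly one; and for an inner face $C$ its bounding cycle has $|V(C)| = |E(C)|$ edges.

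First I would unfold the definitions of $\tau$ and $\kappa$. Since $\tau(v) = \pi - \alpha(v)$ on $\partial V$ and $\kappa(v) = 2\pi - \alpha(v)$ on $V_{\mathrm{int}}$,
$$
\sum_{v \in \partial G} \tau(v) + \sum_{v \in V \setminus \partial G} \kappa(v)
= \pi\,|\partial V| + 2\pi\,|V_{\mathrm{int}}| - \sum_{v \in V} \alpha(v).
$$
Next I would evaluate $\sum_{v\in V}\alpha(v)$ by switching the order of summation: rather than summing over vertices and over the inner faces incident to them, sum over inner faces $C$ and over their vertices. Each inner face $C$ contributes its angle $\frac{|V(C)|-2}{|V(C)|}\pi$ at each of its $|V(C)|$ vertices, so
$$
\sum_{v \in V}\alpha(v) = \sum_{C} \sum_{v \in V(C)} \frac{|V(C)|-2}{|V(C)|}\pi
= \sum_{C} \bigl(|V(C)|-2\bigr)\pi = \pi\sum_{C}\bigl(|E(C)|-2\bigr),
$$
the sums running over all inner faces $C$. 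Counting edge--face incidences gives $\sum_{C} |E(C)| = 2|E_{\mathrm{int}}| + |\partial E| = 2|E_{\mathrm{int}}| + |\partial V|$, hence $\sum_{v\in V}\alpha(v) = \pi\bigl(2|E_{\mathrm{int}}| + |\partial V| - 2f\bigr)$.

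Substituting this back, and using $|V_{\mathrm{int}}| = |V| - |\partial V|$, the left-hand side of the claimed formula collapses to
$$
\pi|\partial V| + 2\pi\bigl(|V|-|\partial V|\bigr) - \pi\bigl(2|E_{\mathrm{int}}| + |\partial V| - 2f\bigr)
= 2\pi\bigl(|V| - |\partial V| - |E_{\mathrm{int}}| + f\bigr).
$$
So it remains only to check that $|V| - |\partial V| - |E_{\mathrm{int}}| + f = 1$, and this is precisely Euler's formula: writing $|E| = |E_{\mathrm{int}}| + |\partial E| = |E_{\mathrm{int}}| + |\partial V|$ and $|F| = f + 1$ (the $f$ inner faces together with the outer face), the relation $|V| - |E| + |F| = 2$ becomes $|V| - |E_{\mathrm{int}}| - |\partial V| + f = 1$, as needed. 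The only point demanding care is the setup of the two counting arguments: it relies on $\partial G$ being a cycle (so boundary vertices and boundary edges are in bijection) and on every inner edge lying on exactly two inner faces, which fails if $G$ has pendant edges or cut vertices; in that case one first reduces to the present situation (or corrects the incidence count), but there is no essential difficulty beyond this bookkeeping.
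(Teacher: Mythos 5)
Your proof is correct. Note that the paper does not prove this statement at all: it quotes the discrete Gauss--Bonnet formula as a classical fact with a citation to the literature (Lyndon--Schupp), so there is no in-paper argument to compare against. Your self-contained derivation is the standard one: unfold $\tau$ and $\kappa$, compute $\sum_v \alpha(v)$ by summing angles face-by-face (each inner face of length $k$ contributes $(k-2)\pi$), convert $\sum_C |E(C)|$ into $2|E_{\mathrm{int}}| + |\partial E|$ by edge--face incidence counting, and reduce the whole identity to Euler's formula $|V|-|E|+|F|=2$; the algebra checks out and yields exactly $2\pi$. The one point you rightly flag is that the incidence counts presuppose that the outer face and the inner faces are bounded by simple cycles (no cut vertices, pendant edges, or bridges, which would make a face boundary walk repeat an edge and break both the vertex--face and edge--face counts). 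This is consistent with the paper's hypotheses ($\partial G$ is explicitly a cycle) and, more importantly, with the only place the theorem is used: in Claim~\ref{claim_diamond_bridged} it is applied to a bigon inside a burned lozenge, i.e.\ a $2$-connected triangulated disk, where all faces are genuine triangles and your counting assumptions hold without any reduction. So the caveat is honest bookkeeping rather than a gap.
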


\begin{figure}
    \centering
    \includegraphics[width=0.35\linewidth]{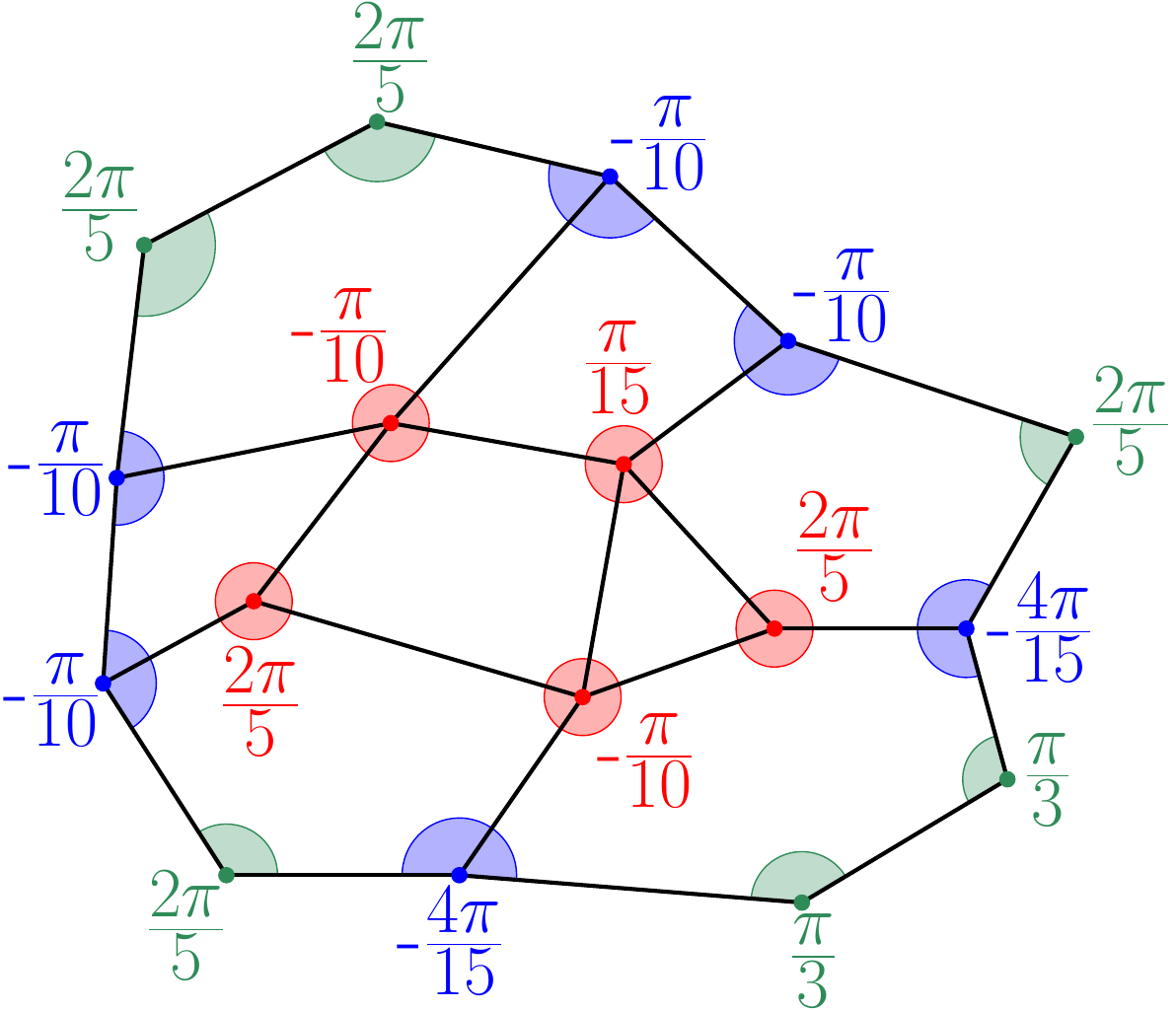}
    \caption{
        \label{fig_GB}
        Gauss-Bonnet's Theorem, and values of $\tau$ and $\kappa$ for the
        vertices of a planar graph $G$.
        The displayed values are those of $\kappa$ for the inner vertices (in
        red), and those of $\tau$ for the others (corners in green, and
        remaining vertices in blue).
        $ \textcolor{blue}{     4 \left( -\frac{\pi}{10}  \right)}
        + \textcolor{blue}{     2 \left( -\frac{4\pi}{15} \right)}
        + \textcolor{SeaGreen}{ 4 \left(  \frac{2\pi}{5}  \right)}
        + \textcolor{SeaGreen}{ 2 \left(  \frac{\pi}{3}   \right)}
        + \textcolor{red}{      2 \left( -\frac{\pi}{10}  \right)}
        + \textcolor{red}{      2 \left(  \frac{2\pi}{5}  \right)}
        + \textcolor{red}{                \frac{\pi}{15}}
        = 2\pi$.
    }
\end{figure}

Appendix~\ref{ref_glo} contains a glossary with all notions and notations.

\section{Metric triangles and intervals}
\label{sect_properties_dim2_bridged}

\subsection{Flat triangles and burned lozenges}
From now on we suppose that $G$ is a $K_4$-free bridged graph.
The \emph{triangular grid} is a tiling of the plane with equilateral triangles
of side $1$.
A \emph{flat triangle} is an equilateral triangle in the triangular grid;
for an illustration see Fig. \ref{fig_flat_triangle} (left).
The interval $I(u,v)$ between two vertices $u,v$ of the triangular grid at distance $\ell$ induces a lozenge (see Fig.
\ref{fig_grille_rongee}, right).  A \emph{burned lozenge} is obtained from $I(u,v)$ by
iteratively removing vertices of degree $3$; equivalently, a burned lozenge is
the subgraph of $I(u,v)$ in the region bounded by two shortest
$(u,v)$-paths. The vertices of a burned lozenge are naturally classified into
\emph{border} and \emph{inner} vertices.
Border vertices can be articulation points of the graph defined by $I(u,v)$,
see Figure \ref{fig_degenerated_vertices}.
A non-articulation border vertex is called a \emph{convex corner} if it belongs
to two triangles of $I(u,v)$, and it is called a \emph{concave corner} if it
belongs to four triangles, see Figure \ref{fig_grille_rongee}(right). The
remaining non-articulation border vertices belong to three triangles and are
just called \emph{regular borders}.
Those vertices and the convex corners are vertices of local convexity of the
burned lozenge, while concave corners are vertices of local concavity.
A \emph{halved burned lozenge} is the intersection of a burned lozenge with
the ball $B_k(u)$, where $0\le k\le \ell$. Notice that all spheres $S_i(u)$
induce parallel paths of the triangular grid.

We denote the convex hull of a metric triangle $uvw$ of $G$ by $\Delta(u,v,w)$ 
and call it a {\it deltoid}.

We start with the following auxiliary result:
\begin{lemma} \label{K3} Spheres $S_k(u)$ of $G$ cannot contain triangles $K_3$.
\end{lemma}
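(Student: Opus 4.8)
The plan is to argue by contradiction: suppose $S_k(u)$ contains a triangle $x_1x_2x_3$ (so $x_1,x_2,x_3$ are pairwise adjacent and each is at distance exactly $k$ from $u$). Since $k\ge 1$ (a sphere of radius $0$ is a single vertex), I would apply the triangle condition \tc to each of the three pairs among $\{x_1,x_2,x_3\}$ with apex $u$. For the edge $x_1x_2$, \tc yields a vertex $y_3\in S_{k-1}(u)$ adjacent to both $x_1$ and $x_2$; similarly an edge $x_2x_3$ gives $y_1\in S_{k-1}(u)$ adjacent to $x_2,x_3$, and $x_1x_3$ gives $y_2\in S_{k-1}(u)$ adjacent to $x_1,x_3$.

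Next I would examine the relationships among $y_1,y_2,y_3$. If two of them coincide, say $y_1=y_2=:y$, then $y$ is adjacent to all of $x_1,x_2,x_3$, so $\{y,x_1,x_2,x_3\}$ contains $K_4$ (the $x_i$ are pairwise adjacent and $y$ is adjacent to each), contradicting $K_4$-freeness. So $y_1,y_2,y_3$ are pairwise distinct. Now consider the $6$-cycle (or shorter closed walk) $x_1 y_3 x_2 y_1 x_3 y_2 x_1$: I want to derive a contradiction with bridgedness, i.e. find a shorter isometric cycle that is not a triangle, or produce $K_4$. The key additional fact is that the $y_i$'s lie on $S_{k-1}(u)$ and the $x_i$'s on $S_k(u)$, so no $y_i$ is adjacent to any $x_j$ other than the two forced ones (adjacency would put $x_j$ at distance $\le k$, which is fine, but I actually need to rule out edges $y_iy_j$ carefully). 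The cleanest route: look at the subgraph induced by $\{x_1,x_2,x_3,y_1,y_2,y_3\}$ together with the fact that $G$ is bridged, hence weakly modular; apply the quadrangle/triangle machinery, or invoke that neighborhoods of convex sets are convex. Specifically, the ball $B_{k-1}(u)$ is convex, and $y_1,y_2,y_3\in B_{k-1}(u)$; since $x_2$ is adjacent to both $y_1$ and $y_3$, and $x_1$ is adjacent to both $y_2$ and $y_3$, I would look for a common neighbor of $y_1$ and $y_3$ inside $B_{k-1}(u)$ (using \tc or \qc on $u, y_1, y_3, x_2$), and iterate, eventually forcing the three $y_i$ to have a common neighbor or to be pairwise adjacent — either way contradicting $K_4$-freeness once combined with the triangle on the $x_i$'s.

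The main obstacle I anticipate is showing the $y_i$ are pairwise adjacent (so that $\{y_1,y_2,y_3\}$ forms a triangle one level down, allowing a descent on $k$, or directly yielding a $K_4$ with some $x_i$). The danger is that $y_1,y_2,y_3$ might be a "spread-out" triple with no two adjacent; then the $6$-cycle $x_1y_3x_2y_1x_3y_2$ could a priori be induced and of length $6$, which would contradict bridgedness directly — so in fact the argument splits into two clean cases: if some two $y_i$'s are adjacent, combine with an $x_j$ adjacent to both to get a triangle and then a $K_4$; if no two $y_i$'s are adjacent, the hexagon $x_1y_3x_2y_1x_3y_2x_1$ is an induced $6$-cycle (no chords: $x_ix_j$ edges are the "short" chords but they make it non-induced — so I must instead check that this hexagon, while not induced, still forces an isometric non-triangular cycle, e.g. by noting $d_G(y_1,y_2)=2$ via $x_3$ but also the $x$-triangle gives an alternative, and bridgedness/weak modularity then forces a common neighbor of $y_1,y_2,y_3$). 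I would resolve this by a direct appeal to the triangle condition applied twice: from $y_1,y_2$ at distance $2$ with common neighbor $x_3$, and from the fact that all of $u,y_1,y_2,y_3$ lie with $d_G(u,y_i)=k-1$, weak modularity produces a common lower neighbor, and descending this way terminates in a $K_4$, completing the contradiction.
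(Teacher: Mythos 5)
Your plan is the right one and matches the paper's proof in its essential ingredients: apply the triangle condition \tc with apex $u$ to edges of the triangle $x_1x_2x_3\subseteq S_k(u)$ to obtain vertices one level down; use $K_4$-freeness to show they are distinct and to control which $x_j$'s they can be adjacent to; use convexity of $B_{k-1}(u)$ to force adjacency among the new vertices. However, the step where you actually land the contradiction does not go through as written. Once you establish that two of the new vertices, say $y_1$ and $y_3$, are adjacent (via the convexity argument through their common neighbor $x_2$), you claim that combining them with an $x_j$ adjacent to both yields ``a triangle and then a $K_4$.'' It only yields a triangle $\{y_1,y_3,x_2\}$; there is no fourth vertex adjacent to all three (the obvious candidates $x_1,x_3$ fail because $K_4$-freeness already gave $y_1\nsim x_1$ and $y_3\nsim x_3$). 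The forbidden configuration is an \emph{induced $4$-cycle}, not a $K_4$: with $y_1\sim y_3$, $y_3\sim x_1$, $x_1\sim x_3$, $x_3\sim y_1$, and the non-adjacencies $y_1\nsim x_1$, $y_3\nsim x_3$, the cycle $y_1y_3x_1x_3$ is induced, contradicting bridgedness. This is exactly the paper's closing step (with its $y,z$ playing the roles of your $y_3,y_1$).

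Two smaller remarks. First, you do not need three vertices $y_1,y_2,y_3$; two suffice, since you only need one adjacent pair of them plus the two $x$'s they miss. Second, your case ``no two $y_i$'s are adjacent'' is vacuous once the convexity argument is in place: every pair of $y_i$'s shares a common neighbor among the $x_j$'s, so each pair is forced adjacent. The worry about an induced hexagon, and the appeal to weak modularity and descent, can therefore be dropped entirely.
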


\begin{proof} Suppose by way of contradiction that the vertices $x_1,x_2,x_3\in S_k(u)$ induce a $K_3$. By triangle condition, there exists a vertex $y$ adjacent to $x_1,x_2$ at distance
$k-1$ from $u$. For the same reason, there exists a vertex $z$ adjacent to $x_2,x_3$ at distance $k-1$ from $u$. Since $G$ does not contain induced $K_4$, $y\ne z$ and $y\nsim x_3, z\nsim x_1$.
Since $y,z\in B_{k-1}(u)$ and $x_2\notin B_{k-1}(u)$, by the convexity of the ball $B_{k-1}(u)$, we have $y\sim z$. But then the vertices $y,z,x_3,x_1$ induce a forbidden 4-cycle.
\end{proof}

The following two lemmas were known before for $K_4$-free planar bridged graphs (see for example, [Proposition 3] \cite{BaCh_wma}
for the first lemma) but their proofs remain the same. For their full proofs, see \cite{Ratel}.

\begin{lemma}
    \label{lem_MT_structure_K4-free}
    Any deltoid $\Delta(u,v,w)$ of $G$ is a flat triangle.
\end{lemma}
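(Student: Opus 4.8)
The statement to prove is Lemma~\ref{lem_MT_structure_K4-free}: any deltoid $\Delta(u,v,w)$ (the convex hull of a metric triangle $uvw$) of a $K_4$-free bridged graph $G$ is a flat triangle. The plan is to combine the metric structure of metric triangles in weakly modular graphs (Proposition~\ref{metric_triangles}: every metric triangle is strongly equilateral, hence equilateral of some size $k$) with the local combinatorics forced by bridgedness, $K_4$-freeness, and the Gauss--Bonnet formula to identify $\Delta(u,v,w)$ with an equilateral triangle of side $k$ in the triangular grid.

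First I would set $k := \dist_G(u,v) = \dist_G(u,w) = \dist_G(v,w)$ and dispose of the trivial cases $k \le 1$. For $k \ge 2$, the key is to analyze the subgraph induced by $\Delta(u,v,w)$. Since $uvw$ is strongly equilateral, every vertex $x \in I(u,v)$ satisfies $\dist_G(w,x) = k$, and symmetrically; this pins down the ``shape'' of the hull. I would show $\Delta(u,v,w)$ is isometric in $G$ (convex hulls of metric triangles are convex by definition, and balls are convex in bridged graphs, so intersections of balls behave well), so that all distances can be computed inside it; in particular $\Delta(u,v,w)$ is itself a $K_4$-free bridged graph with a nice boundary. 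The next step is planarity: I would argue that $\Delta(u,v,w)$ embeds in the plane with the three geodesics $uv$, $vw$, $wu$ (or outer shortest paths) bounding the outer face, every inner face a triangle (by $K_4$-freeness every clique is a triangle, and by bridgedness/convexity there are no ``holes''), so that Gauss--Bonnet (Theorem~\ref{thm_Gauss-Bonnet}) applies. Here $\tau(v) = \pi - \alpha(v)$ on the boundary and $\kappa(v) = 2\pi - \alpha(v)$ inside, and every inner face contributes $\pi/3$ to the angle sums.

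Then the heart of the argument: use Lemma~\ref{K3} (spheres $S_i(u)$ contain no $K_3$) to control the local degrees. Since each $S_i(u) \cap \Delta$ induces a path (triangle-free, and the levels are ``parallel'' in a bridged graph), I would deduce that every inner vertex of $\Delta$ lies on exactly six triangles, giving $\alpha(v) = 2\pi$ and $\kappa(v) = 0$ for all inner vertices. Gauss--Bonnet then forces $\sum_{v \in \partial\Delta} \tau(v) = 2\pi$, i.e. the total ``curvature'' is concentrated on the boundary and sums to exactly $2\pi$, which is precisely the Euclidean equilateral-triangle signature: three corners of defect $\tau = \pi/3$ each (contributing $\pi$) and... wait, $3 \cdot \pi/3 = \pi \ne 2\pi$, so I must be more careful — the correct accounting is that the three ``tips'' $u, v, w$ each have a single triangle, $\alpha = \pi/3$, hence $\tau = 2\pi/3$, giving $3 \cdot 2\pi/3 = 2\pi$, and every other boundary vertex has $\tau = 0$, i.e. lies on exactly three triangles (a flat boundary edge of the grid). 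This rigidity — boundary vertices other than the tips have exactly three triangles, inner vertices exactly six — together with the fact that $S_i(u)$ are paths of consecutive integer lengths, lets me reconstruct $\Delta(u,v,w)$ edge by edge as the side-$k$ equilateral triangle of the triangular grid, establishing the isomorphism.

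The main obstacle I anticipate is making the planarity and ``no holes'' claim rigorous: I need to show that $G[\Delta(u,v,w)]$ actually admits a planar embedding in which the metric triangle's three sides bound the outer face and all inner faces are triangles, with no internal vertices ``missing'' (no non-triangular inner faces). This should follow from convexity of $\Delta$ plus the fact that in a $K_4$-free bridged graph any non-triangular cycle that is not filled in would create an isometric cycle of length $\ge 4$ (contradicting bridgedness) or would let Gauss--Bonnet detect a face of length $\ge 4$ with the wrong angle sum — but turning this into a clean argument requires care, and it is why the paper defers the full proof to \cite{Ratel} while noting the planar case was known from \cite{BaCh_wma}. A secondary subtlety is verifying that $\Delta(u,v,w)$ is indeed convex/isometric and 2-dimensional (no $K_4$), which is immediate from $G$ being $K_4$-free and from convex hulls being convex by construction.
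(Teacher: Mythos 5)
Your plan takes a genuinely different route from the paper's, but it has a circularity problem that the paper's approach avoids. The paper's sketch is constructive and purely metric: starting from a shortest $(v,w)$-path, it applies the triangle condition level by level to produce a flat triangle $(V_k,E_k)$ inside $\Delta(u,v,w)$, and then proves that this flat triangle is \emph{locally convex}, hence convex, hence equal to the convex hull $\Delta(u,v,w)$. No planar embedding is ever invoked, so Gauss--Bonnet plays no role in this lemma (in the paper it is used only later, inside Claim~\ref{claim_diamond_bridged}).

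Your approach instead postulates at the outset that $G[\Delta(u,v,w)]$ embeds in the plane with all inner faces triangular and the three geodesics bounding the outer face, and then reads off the flat-triangle structure from Gauss--Bonnet together with Lemma~\ref{K3}. The curvature bookkeeping in the second half is fine once the planar triangulated-disk structure is in hand: $\kappa(v)\le 0$ inside by bridgedness, $\tau(v)\ge 0$ on the boundary by convexity, total $2\pi$, so all inequalities are forced to be equalities except at $u,v,w$ where $\tau=2\pi/3$. But that planar face structure is the entire content of the lemma. A priori $\Delta(u,v,w)$ is just a convex subgraph of an arbitrary $K_4$-free bridged graph; two-dimensional systolic complexes are contractible but need not be planar, and you have not offered an argument for why this particular convex hull is a triangulated disk, why the boundary cycle is precisely $I(u,v)\cup I(v,w)\cup I(w,u)$ (which presupposes each side is a single geodesic, itself part of what is being proved), or why every bounded region is a single triangle rather than, say, a $4$-cycle that happens not to be isometric. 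Bridgedness forbids isometric cycles of length $\ge 4$, but a face of a putative embedding need not be an isometric cycle, so ``no holes by bridgedness'' is not a proof. You honestly flag this as the hard part, but as written the plan assumes what it sets out to show. Contrast this with the paper's route, where the iterated triangle condition gives you an explicit copy of the grid triangle without ever needing to know the ambient local structure first; local convexity then closes the argument without Gauss--Bonnet. If you want to keep the Gauss--Bonnet viewpoint, the cleanest fix is to first prove (as in Lemma~\ref{lem_interval_shape} via Claim~\ref{claim_burned_lozenge}) that each level set $S_i(u)\cap\Delta$ is a convex \emph{path}, not just triangle-free — that requires ruling out degree-$3$ vertices using $K_4$-freeness, not just Lemma~\ref{K3} — and then use the level paths to get the embedding, which is essentially rediscovering the paper's constructive step.
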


\begin{proof}
    We only give here some hints on how to prove the result. We can first show
    that $\Delta(u,v,w)$ contains a flat triangle. To do so, set $k :=
    \dist_G(v,w)$ and consider a shortest $(v,w)$-path $P$. We rename its
    vertices by $v =: u_0^k, u_1^k, \ldots, u_k^k := w$, where $u_i^k$ denotes
    the vertex at distance $i$ from $v = u_0^k$ on $P$. By successively
    applying the triangle condition to vertices $u_0^0 := u$, $u_i^k$ and
    $u_{i+1}^k$ for $i \in \{0, \ldots, k-1\}$, we derive vertices $u_i^{k-1}
    \sim u_i^k, u_{i+1}^k$ at distance $k-1$ from $u_0^0$. Continuing so, we
    obtain that $\Delta(u,v,w)$ contains a flat triangle of the form:
    \begin{itemize}
        \item $V_k := \{ u_{i}^{j} : 0 \le j \le k \text{ and } 0 \le i \le j
        \}$;
        \item $E_k := \{ u_{i}^{j}u_{i'}^{j'} : u_{i}^{j},
        u_{i'}^{j'} \in V_k, (j = j' \text{ and } i' = i - 1) \text{ or } (j' =
        j - 1 \text{ and } i - 1 \le i' \le i) \}$.
    \end{itemize}
    It then remains to show that this flat triangle is vertex-maximal. This is
    done by proving that it is locally convex (and thus convex).
\end{proof}

\begin{figure}[htb]
    \begin{minipage}{0.49\linewidth}
        \centering
        \includegraphics[width=0.55\linewidth]{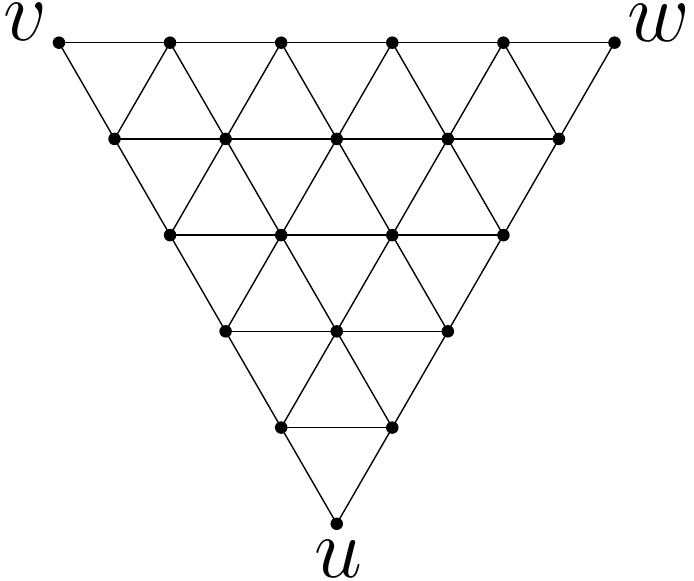}
    \end{minipage}
    \begin{minipage}{0.49\linewidth}
        \centering
        \includegraphics[width=0.85\linewidth]{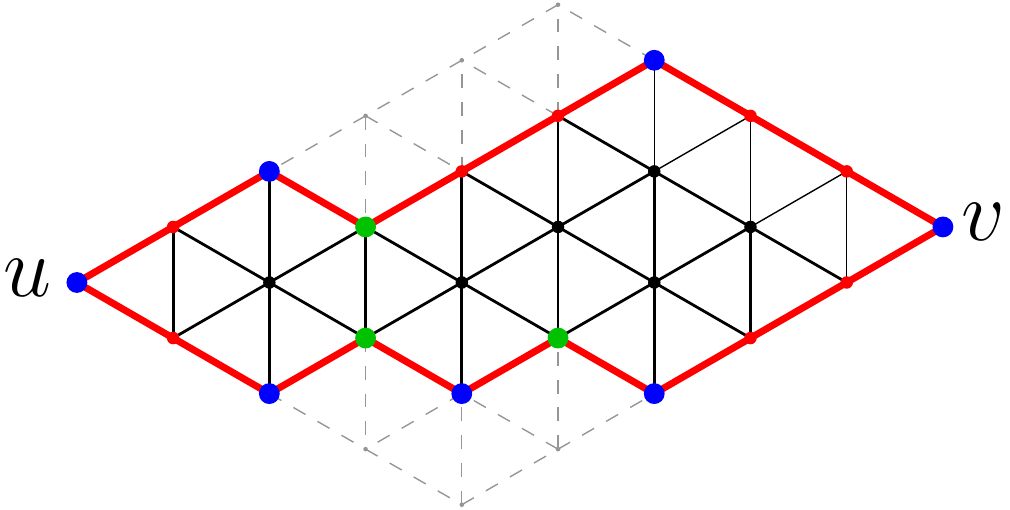}
    \end{minipage}
    \caption{
        \label{fig_flat_triangle}
        \label{fig_grille_rongee}
        A flat triangle (left) and a burned lozenge (right).
        The concave corners are in green and convex ones are drawn in blue. The
        border is in red and the inner vertices in black.
    }
\end{figure}

\begin{figure}[htb]
    \centering
    \includegraphics[width=0.8\linewidth]{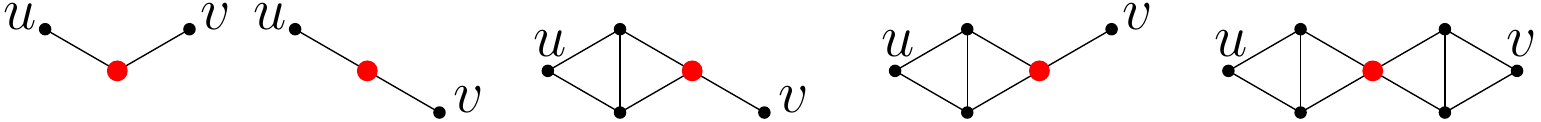}
    \caption{
        \label{fig_degenerated_vertices}
        The five red vertices indicate types of articulation points.
    }
\end{figure}

\begin{lemma}
    \label{lem_interval_shape}
    Any interval $I(u,v)$ of $G$ induces a burned lozenge.
\end{lemma}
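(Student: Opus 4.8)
The plan is to dissect $I(u,v)$ into distance layers from $u$ and to recognize, layer by layer, the local triangular-grid pattern of a burned lozenge. Write $\ell := \dist_G(u,v)$ and, for $0 \le i \le \ell$, set $L_i := S_i(u) \cap I(u,v) = B_i(u) \cap B_{\ell-i}(v)$; then $I(u,v) = L_0 \cup L_1 \cup \cdots \cup L_\ell$ with $L_0 = \{u\}$, $L_\ell = \{v\}$, and $G[I(u,v)]$ is connected since every vertex of $I(u,v)$ lies on a shortest $(u,v)$-path. The key preliminary observation is that each $L_i$, being the intersection of the two balls $B_i(u)$ and $B_{\ell-i}(v)$, is convex; being moreover triangle-free by Lemma~\ref{K3}, it is an isometric tree of $G$ (a connected bridged triangle-free graph is a forest). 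This is stronger than anything available for a general sphere $S_i(u)$, and it is precisely what lets us work around the fact that $I(u,v)$ itself is not convex.

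First I would show that each $L_i$ is in fact a path --- i.e. has no branch vertex --- and that two consecutive layers $L_i$ and $L_{i+1}$ glue together exactly as a strip of the triangular grid: every vertex of $L_i$ is joined to a subpath of $L_{i+1}$, consecutive vertices of $L_i$ have a unique common neighbour in $L_{i+1}$, and there are no further edges. The tools are the triangle condition \tc and the quadrangle condition \qc together with the exclusion of $K_4$ and of induced $4$- and $5$-cycles. On the one hand, \tc supplies, for every edge of $L_i$, a common neighbour in $L_{i-1}$ and a common neighbour in $L_{i+1}$ (the same device that builds the flat triangle in Lemma~\ref{lem_MT_structure_K4-free}, and a distance count confirms that these common neighbours indeed land back in $I(u,v)$). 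On the other hand, if two non-adjacent vertices of $L_i$ had common neighbours both in $L_{i-1}$ and in $L_{i+1}$, those four vertices would induce a forbidden $4$-cycle. Playing these two facts against one another rules out branch vertices inside a layer and extra edges between layers; once each $L_i$ is a path and consecutive layers form grid-strips, $G[I(u,v)]$ is planar, all of its inner faces are triangles, and it retracts onto $\{u\}$ layer by layer, hence is a triangulated (possibly pinched) disk --- the ``development'' onto the triangular grid of a region swept out along a chosen shortest $(u,v)$-path.

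It then remains to recognize this triangulated disk as a burned lozenge, that is, as the region enclosed by two shortest $(u,v)$-paths. Following the lowest (resp. highest) available edge through each grid-strip produces two geodesics $P^-$ and $P^+$ from $u$ to $v$ bounding $G[I(u,v)]$, with no vertex of $I(u,v)$ strictly outside them; I would then classify each boundary vertex by the number of inner triangles it meets. Using Lemma~\ref{K3} and \tc --- or, alternatively, the Gauss--Bonnet formula (Theorem~\ref{thm_Gauss-Bonnet}) applied to $G[I(u,v)]$ together with the fact that $P^-$, $P^+$ are geodesics meeting only at $u$ and $v$ --- one checks that at most four inner triangles meet at any boundary vertex; the resulting classification (tips at $u$ and $v$, convex corners meeting two triangles, regular borders meeting three, concave corners meeting four, and articulation vertices where a layer degenerates to a single vertex) is exactly the combinatorial description of a burned lozenge. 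The main obstacle is the middle step: proving that the layers $L_i$ are paths and that consecutive layers assemble into triangular-grid strips. Unlike the situation of Lemma~\ref{lem_MT_structure_K4-free}, the boundary of $I(u,v)$ need not be ``straight'' --- it may turn at concave corners and even pinch at articulation points --- and convexity of $I(u,v)$ is unavailable, so this local analysis, although it only invokes \tc, \qc, $K_4$-freeness and the convexity of balls, proceeds by a delicate case distinction; the full argument runs along the lines of \cite{Ratel}.
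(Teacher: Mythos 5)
Your plan tracks the paper's own proof almost step for step: decompose $I(u,v)$ into the layers $L_i = B_i(u)\cap B_{\ell-i}(v)$, observe each is convex and triangle-free (Lemma~\ref{K3}) hence a tree and then a path, and glue consecutive layers into triangular-grid strips via the triangle condition, reducing to $2$-connected blocks when there are articulation points. The one place your sketch blurs a detail is the ``no branch vertex'' step: the induced-$C_4$ argument you mention doesn't directly apply there (the TC-partners of distinct edges at a branch vertex are distinct by convexity of $L_i$), and the paper instead derives a $K_3$ in a sphere / $K_4$ contradiction from the pairwise adjacency, forced by convexity of $B_{i-1}(u)$, of those TC-partners --- but this is exactly the kind of local case analysis you flag as the technical core, so the approach is the same.
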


\begin{proof}
Let $u$ and $v$ be two vertices at distance $\ell$ of a $K_4$-free bridged graph $G$. Let $C_i := \{w\in I(u,v): d_G(u,w)=i\}$ for $0\leq i\leq \ell$. We first show the following claim.

\begin{claim}\label{claim_burned_lozenge}
For all $0\leq i\leq \ell$, $C_i$ induces a convex path of $G$.
\end{claim}

\begin{proof}
Observe that vertices of $C_i$ are at distance $\ell - i$ from $v$ and so $C_i = B_i(u)\cap B_{\ell-i}(v)$. Since $B_i(u)$ and $B_{\ell-i}(v)$ are both convex, $C_i$ is also convex, and thus $G[C_i]$
is a bridged subgraph. By Lemma \ref{K3}, $G[C_i]$ cannot contains triangles, thus $G[C_i]$ is a tree. Suppose that this tree contains a vertex $x$ with three neighbors $x_1,x_2,x_3$. Let $y_j$ be the common neighbor
of $x$ and $x_j$, $j=1,2,3$, at distance $i-1$ from $u$ (obtained by applying the triangle condition). Since $C_i$ is convex, the vertices $y_1,y_2,y_3$ are pairwise distinct. Since $y_1,y_2,y_3\in B_{i-1}(u)$ and $x\notin B_{i-1}(u)$, the convexity of $B_{i-1}(u)$ implies that $y_1,y_2,y_3$ are pairwise adjacent. Together with $x$ they induce a forbidden $K_4$. This establishes that $C_i$ is a convex path.
\end{proof}

Further, we will suppose that $I(u,v)$ does not contain articulation points, otherwise, we can apply the induction hypothesis to each 2-connected component of $I(u,v)$.
We now show by induction on  $1\le k\le \ell$ that the intersection $I(u,v)\cap B_k(u)$ is a halved burned lozenge in which all spheres $I(u,v)\cap S_{i}(u), 1\le i\le k$ are vertical paths. For this, suppose that $u$ is identified with the origin of the triangular grid. Note that $I(u,v)\cap B_1(u)$ is a triangle $u,w',w''$. 
We embed this triangle in the triangular grid in such a way  that 
$w'w''=I(u,v)\cap S_1(u)$ is a vertical edge to the right of $u$.
Assume by induction hypothesis that the desired embedding property is satisfied for $k-1$, in particular, that $C_{k-1}=I(u,v)\cap S_{k-1}(u)$ is embedded as a vertical path.  
By Claim \ref{claim_burned_lozenge}, $C_{k-1}$ and $C_k$ induce convex paths of 
$G$. From their definition and by Claim \ref{claim_burned_lozenge}, each vertex 
of $C_{k-1}$ is adjacent to at least one and to at most two vertices of $C_k$ 
and, vice versa, each vertex of $C_k$ is adjacent to at least one and to at 
most two vertices of $C_{k-1}$. 
This implies that the lengths of paths $C_k$ and $C_{k-1}$ differ by at most 
1.  
Suppose that $C_{k-1}$ induces the path $P=(u_0,u_1,\dots, u_p)$ for some $p\ge 1$.
For $1\leq i \leq p$, by the triangle condition, there exists a vertex $v_i\in C_k$ at distance $\ell - k$ from $v$ and adjacent to $u_{i-1},u_i$. By Claim~\ref{claim_burned_lozenge}, all $v_i$ are distinct.
For $1\leq i\leq p-1$, $v_i$ and $v_{i+1}$ are adjacent since they are both adjacent to $u_{i+1}$ and 
$C_k$ is convex by Claim~\ref{claim_burned_lozenge}. Hence, the vertices $v_1,\ldots,v_p$ define a subpath $P'$ of $C_k$.
If $C_k=P'$, then one can extends the halved burned lozenge representing $I(u,v)\cap B_{k-1}(u)$ to a one representing $I(u,v)\cap B_{k}(u)$  by embedding the path $P'$ vertically to the right of the path $P$. Now suppose that
$P'\ne C_k$. Since any vertex of $C_k$ is adjacent to at least one vertex of $C_{k-1}$ and each vertex of $C_{k-1}$ is adjacent to at most two vertices of $C_k$, we conclude that $C_k\setminus P'$ may contains either one vertex $x$ or two vertices $x,y$. In the first case, $x$ is adjacent to $v_1$ (or to $v_p$) and to $u_0$ (or to $u_p$). In the second case, $x$ is adjacent to $v_1$ and $u_0$ and $y$ is adjacent to $v_p$ and $u_p$.
In the first case, we add the vertical edge $xv_1$ or $v_px$ to $P'$. In the second edge, we add the vertical edges $xv_1$ and $yv_p$ to $P'$. In both cases, we obtain the representation of $I(u,v)\cap B_k(u)$ as a halved burned lozenge.
If $k=\ell$, then this halved burned lozenge will be a burned lozenge.
\end{proof}

\begin{lemma}
    \label{lem_strongly_equilateral_qm}
    If $uvx'$ is the quasi-median of the triplet $uvx$, and if $w$ is a
    neighbor of $u$ in $I(u,v)$, then $\dist_G(w,x) = \dist_G(u,x) =
    \dist_G(v,x)$.
\end{lemma}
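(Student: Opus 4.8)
The plan is to establish the two equalities separately: $\dist_G(u,x)=\dist_G(v,x)$ is immediate, while $\dist_G(w,x)=\dist_G(u,x)$ follows from the fact that $x$ ``sees'' the whole flat triangle $\Delta(u,v,x')$ through its corner $x'$.

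First I would collect the consequences of $uvx'$ being a quasi-median of $uvx$. Since it is a metric triangle, it is strongly equilateral by Proposition~\ref{metric_triangles} (bridged graphs being weakly modular); so, setting $r:=\dist_G(u,v)$, we get $\dist_G(u,x')=\dist_G(v,x')=r$ and $\dist_G(x',y)=r$ for every $y\in I(u,v)$, in particular for $y=w$. By the definition of a quasi-median, $x'$ lies on a shortest $(u,x)$-path and on a shortest $(v,x)$-path, i.e.\ $x'\in I(u,x)\cap I(v,x)$; writing $\ell:=\dist_G(x',x)$ this gives $\dist_G(u,x)=r+\ell=\dist_G(v,x)$ (the first equality), as well as $\dist_G(w,x)\le\dist_G(w,x')+\ell=r+\ell$. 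Since $w\sim u$ gives $\dist_G(w,x)\ge\dist_G(u,x)-1=r+\ell-1$, only the inequality $\dist_G(w,x)\ge r+\ell$ remains.

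For this, I would pass to $\Delta:=\Delta(u,v,x')$, which by Lemma~\ref{lem_MT_structure_K4-free} is a flat triangle with corners $u,v,x'$; note that $w$ is a vertex of $\Delta$ on the side $[u,v]=I(u,v)$ opposite $x'$, at distance $r$ from $x'$. I would then prove the ``gate'' identity
$$\dist_G(x,a)=\ell+\dist_G(x',a)\qquad\text{for every vertex }a\text{ of }\Delta,$$
which, applied to $a=w$, gives $\dist_G(w,x)=\ell+r=\dist_G(u,x)$ and completes the proof. The identity is proved by induction on $t:=\dist_G(x',a)$, using the rows $R_t:=S_t(x')\cap\Delta$ of the flat triangle (so $R_0=\{x'\}$, $R_r=I(u,v)$, and each $R_t$ is a path whose two endpoints are the vertices of $R_t$ lying on the sides $[x',u]$ and $[x',v]$). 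If $a$ is an endpoint of $R_t$, say $a\in[x',u]$, then $a\in I(u,x')$ with $\dist_G(u,a)=r-t$, so $\ell+t=\dist_G(u,x)-\dist_G(u,a)\le\dist_G(a,x)\le\dist_G(a,x')+\ell=t+\ell$ (here $\dist_G(u,x)=r+\ell$ because $x'\in I(u,x)$); the case $a\in[x',v]$ is symmetric, using $x'\in I(v,x)$. If $a$ is an interior vertex of $R_t$ (so $t\ge2$, since $R_1$ consists only of its two endpoints), then inside $\Delta$ it has exactly two neighbours $a',a''\in R_{t-1}$ with $\{a,a',a''\}$ a triangular face, hence $a'\sim a''$, and the edge $a'a''$ lies in a second face $\{a',a'',\bar a\}$ of $\Delta$ with $\bar a\in R_{t-2}$. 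By the induction hypothesis $\dist_G(x,a')=\dist_G(x,a'')=\ell+t-1$ and $\dist_G(x,\bar a)=\ell+t-2$, so, since $a\sim a'$, $\dist_G(x,a)\in\{\ell+t-2,\ell+t-1,\ell+t\}$. The value $\ell+t-1$ is impossible, for then $\{a,a',a''\}$ would be a triangle in the sphere $S_{\ell+t-1}(x)$, contradicting Lemma~\ref{K3}. The value $\ell+t-2$ is also impossible: then $a$ and $\bar a$ would both lie in the ball $B_{\ell+t-2}(x)$, which is convex, so $I(a,\bar a)\subseteq B_{\ell+t-2}(x)$; but $a',a''\in I(a,\bar a)$ (they are common neighbours of the pair $a,\bar a$, which is at distance $2$), contradicting $\dist_G(x,a')=\ell+t-1$. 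Hence $\dist_G(x,a)=\ell+t$, which finishes the induction.

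The only genuine obstacle is the lower bound $\dist_G(w,x)\ge r+\ell$: the elementary estimates leave exactly the two possibilities $r+\ell-1$ and $r+\ell$ for $\dist_G(w,x)$, and excluding $r+\ell-1$ really requires the global geometry of the flat triangle. Packaging this as the gate identity and running the row-induction above (with Lemma~\ref{K3} ruling out a triangle inside a sphere, and convexity of balls ruling out a shortcut) is the heart of the argument; everything else is bookkeeping about flat triangles.
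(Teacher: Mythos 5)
Your proof is correct, but it takes a genuinely different route from the paper's. The paper argues locally and by contradiction: assuming $\dist_G(w,x)=k-1$, it picks the three vertices $u_1,w_1,u_2$ of $\Delta(u,v,x')$ closest to the corner $u$, uses the triangle condition to produce a vertex $t\sim w,u_1$ at distance $k-2$ from $x$, and then ball-convexity together with the absence of induced $C_4$/$K_4$ forces a forbidden $K_4$ among $\{t,u_1,w,w_1\}$ (the published text has a typo here, naming the wrong quadruple, but the intended $K_4$ is this one). You instead prove the stronger gate identity $\dist_G(x,a)=\dist_G(x,x')+\dist_G(x',a)$ for \emph{every} vertex $a$ of $\Delta(u,v,x')$, by induction along the rows $R_t=S_t(x')\cap\Delta$: endpoints of each row are handled by $x'\in I(u,x)\cap I(v,x)$, and interior vertices by excluding the two smaller distance values via Lemma~\ref{K3} (no $K_3$ in a sphere) and ball-convexity; the target equality is the special case $a=w$, $t=r$. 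Your argument is longer but yields a reusable structural fact (that $x'$ is the gate of $x$ in the whole flat triangle), whereas the paper's is shorter and tailored to exactly the needed inequality. Both correctly rest on the flatness of the deltoid (Lemma~\ref{lem_MT_structure_K4-free}) and ball-convexity in bridged graphs, so both proofs are valid; yours is just a different, slightly more informative, decomposition of the argument.
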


\begin{proof}
    From the definition of the quasi-median, and since $uvx'$ is a strongly
    equilateral metric triangle, we deduce that $\dist_G(u,x) = \dist_G(v,x) =:
    k$ and $\dist_G(w,x) \le k$.
    Suppose by way of contradiction that $\dist_G(w,x) < k$, i.e.,
    $\dist_G(w,x) = k - 1$.
    Consider the following vertices of the deltoid $\Delta(u,v,x')$:
    $u_1$ the common neighbor of $u$ and $w$; $w_1$ the common neighbor of $w$
    and $u_1$; and $u_2$ the common neighbor of $u_1$ and $w_1$, see Figure
    \ref{fig_strongly_equilateral_qm}.
    Since $\dist_G(w,x) = \dist_G(u_1,x) = k - 1$, by triangle condition, there
    exists a vertex $t \sim u_1, w$ having distance $k-2$ to $x$.
    Since $w$ is not adjacent to $u_2$, because of Lemma
    \ref{lem_MT_structure_K4-free}, $t$ is different from $u_2$.
    By convexity of the ball $B_{k-2}(x)$, we deduce that $t \sim u_2$.
    The four-cycle $\{w, t, u_2, w_1\}$ can not be induced. Since $w \not\sim
    u_2$, we conclude that $w_1 \sim t$. Consequently, the vertices $w$, $t$,
    $u_2$, $w_1$ induce a forbidden $K_4$.
\end{proof}

\begin{figure}[htb]
    \centering
    \includegraphics[width=0.23\linewidth]{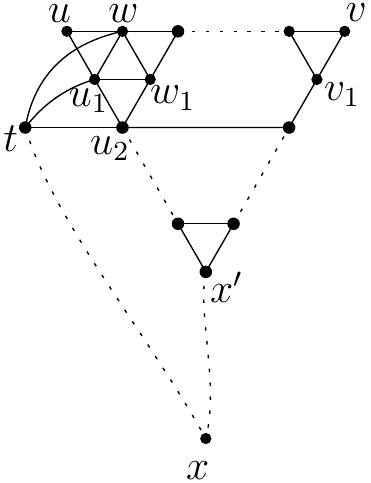}
    \caption{
        \label{fig_strongly_equilateral_qm}
        Illustration of Lemma \ref{lem_strongly_equilateral_qm}.
    }
\end{figure}

\subsection{Starshaped trees}
We now introduce starshaped sets and trees, and we describe the structure of
the intersection of an interval and a starshaped tree.
Let $T$ be a tree rooted at a vertex $z$. A path $P$ of $T$ is called
\emph{increasing} if it is entirely contained on a single branch of $T$, i.e.,
if $\forall u, v \in P$, either $I_T(u,z) \subseteq I_T(v,z)$, or $I_T(v,z)
\subseteq I_T(u,z)$.
Equivalently, an increasing path is the shortest path between two vertices that
are in ancestor-descendant relation.
A subset $S$ of the vertices of a graph $G$ is said to be
\emph{starshaped} relatively to a vertex $z \in S$ if $I(z,s)
\subseteq S$ for all $s \in S$.
If, additionally, every $I(z,s)$ induces a single path of $G$, then
$S$ is called a \emph{starshaped tree} (rooted at $z \in S$), and each interval
$I(z,s)$ is called a \emph{branch} of $S$. Taking the union of all branches, a
starshaped tree $S$ rooted at $z$ is a shortest-path spanning tree of $G[S]$
rooted at $z$.
Similarly to other shortest-path trees (e.g., BFS trees), starshaped trees
are not necessarily induced subgraphs of $G$, see Figure
\ref{fig_total_boundary}.

Let $I(u,z)$ be a burned lozenge which is not a single shortest path of $G$.
Then $I(u,z)$ contains a non-trivial block, i.e., a $2$-connected component of
$G$ containing a triangle.
Let $B$ be such a non-trivial block closest to $z$. Let $z_0$ be the vertex of
$B$ the closest to $z$ and let $u_0$ be the vertex of $B$ the closest to $u$.
The boundary of $B$ defines two shortest $(z_0, u_0)$-paths $Q_1$ and $Q_2$.
Let $v_1$ (resp. $v_2$) be the closest to $z$ convex corner of $I(u,z)$
belonging to $Q_1$ (resp. to $Q_2$).
We call the convex corners $v_1$ and $v_2$ \emph{extremal} relatively to $z$.

\begin{lemma}
    \label{lem_2_increasing_paths}
    Let $T$ be a starshaped tree of $G$ (rooted at $z$), and let $u \in V
    \setminus T$.
    Then $T' := I(u,z) \cap T$ is a starshaped tree.
    Moreover, $T'$ is a tripod consisting of the union of three increasing
    paths $P_0$, $P_1$ and $P_2$ such that (see Figure \ref{fig_tripod}):
    \begin{enumerate}[(i)]
        \item $P_0 = I(z,z_0)$, $P_1 = I(z_0,v_1)$, and $P_2 = I(z_0,v_2)$;
        \item $I(z,v_1) = P_0 \cup P_1$ and $I(z,v_2) = P_0 \cup P_2$;
        \item $z_0$, $v_1$, and $v_2$ are defined with respect to the
        non-trivial block $B$ closest to $z$.
    \end{enumerate}
\end{lemma}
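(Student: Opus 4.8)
The plan is to analyze $T' := I(u,z) \cap T$ by combining the burned-lozenge structure of intervals (Lemma~\ref{lem_interval_shape}) with the starshaped structure of $T$, and to do so in two stages: first showing $T'$ is starshaped, then identifying its shape as a tripod.

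\medskip

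\textbf{Step 1: $T'$ is starshaped (relatively to $z$).} I would first argue that $z \in T'$: since $z \in T$ trivially, and $z \in I(u,z)$ always, we have $z \in T'$. Now take any $s \in T'$; I need $I(z,s) \subseteq T'$. Since $s \in T$ and $T$ is starshaped, $I(z,s) \subseteq T$. Since $s \in I(u,z)$, there is a shortest $(u,z)$-path through $s$, so $\dist_G(u,s) + \dist_G(s,z) = \dist_G(u,z)$; for any $w \in I(z,s)$ we have $\dist_G(z,w)+\dist_G(w,s)=\dist_G(z,s)$, hence $\dist_G(u,w) \le \dist_G(u,s)+\dist_G(s,w) = \dist_G(u,s) + \dist_G(s,z) - \dist_G(w,z) = \dist_G(u,z) - \dist_G(w,z)$, so $w \in I(u,z)$ as well. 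Thus $I(z,s) \subseteq T \cap I(u,z) = T'$, proving starshapedness. (This argument is really just the standard fact that intervals are "gated-like" closed under sub-intervals, intersected with the hypothesis on $T$.)

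\medskip

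\textbf{Step 2: identifying the shape.} This is where the $K_4$-free bridged structure enters and is the main obstacle. Write $I := I(u,z)$. By Lemma~\ref{lem_interval_shape}, $I$ is a burned lozenge. I would distinguish two cases. If $I$ is a single shortest $(u,z)$-path, then $T' = I \cap T$ is a subpath of $T$ containing $z$; since $T'$ is a connected subgraph of the tree $T$ containing the root, it is an increasing path $I(z,v)$ for its $z$-furthest vertex $v$ — this is the degenerate tripod ($P_1 = P_2$ trivial, or all three collapse). If $I$ is not a single path, then by the discussion preceding the lemma, $I$ contains a non-trivial block; let $B$ be the one closest to $z$, with $z_0$ the vertex of $B$ closest to $z$, $u_0$ the vertex of $B$ closest to $u$, and $v_1, v_2$ the extremal convex corners relative to $z$ on the two boundary paths $Q_1, Q_2$ of $B$. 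The key claims to establish, in order: (a) $I(z,z_0)$ is a single path — because $B$ is the block \emph{closest} to $z$, the portion of $I$ between $z$ and $z_0$ has no non-trivial block, so it is a single shortest path, hence lies in any shortest-path tree structure and in particular $I(z,z_0) \subseteq T'$ as an increasing path $P_0$; (b) within $B$, the interval $I(z_0, v_i)$ is the single path $Q_i$ truncated at the extremal convex corner $v_i$ — here I would use that $v_i$ is a convex corner (local convexity) so that $I(z_0,v_i)$ does not "cut the corner'' into the inside of the burned lozenge, forcing it to run along the boundary $Q_i$; these give $P_1 = I(z_0,v_1)$ and $P_2 = I(z_0,v_2)$; (c) $T \cap I$ contains no vertex strictly beyond the extremal convex corners toward $u$ — i.e. once we pass a concave corner or enter deeper into $B$ or a later block, branching in $T$ (which must be a shortest-path tree, so every vertex has a unique path to $z$ in $T$) cannot be realized simultaneously along both boundary paths, so $T'$ stops at $\{v_1, v_2\}$. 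Concretely, if some $s \in T'$ had $\dist_G(z,s) > \dist_G(z,v_i)$ along the "$Q_i$ side'', then $I(z,s)$ would have to pass through a concave corner of $I$, and I would derive a contradiction with $I(z,s)$ being a single path (using Lemma~\ref{lem_MT_structure_K4-free}/the burned-lozenge geometry: at a concave corner two shortest paths to $z$ diverge, so the corresponding $I(z,s)$ is not a path — contradicting that $T$ is a starshaped \emph{tree}). Assembling (a)–(c) yields exactly items (i)–(iii).

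\medskip

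\textbf{Where the difficulty lies.} Steps 1 and claim (a) are routine metric-graph arguments. The real work is claim (b), and especially claim (c): pinning down that $T'$ cannot extend past the first pair of extremal convex corners. The obstacle is purely geometric — one must use the explicit halved-burned-lozenge embedding from the proof of Lemma~\ref{lem_interval_shape} to locate concave corners and show that any branch of $T$ reaching "around'' a concave corner would fail to be a single path, contradicting the definition of a starshaped tree. I would organize this by fixing the planar embedding of $I$ in the triangular grid, observing that $z$ and $u$ are the two acute apices of the burned lozenge, and tracking how spheres $S_i(z) \cap I$ are vertical paths; the extremal convex corners $v_1,v_2$ are precisely the last vertices on $\partial I$ before the boundary turns concave, and any $s$ past them has $|I(z,s) \cap S_i(z)| \ge 2$ for some $i$, so $I(z,s)$ is not a path. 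This is the step I expect to require the most care.
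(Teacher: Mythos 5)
Your proposal follows essentially the same two-stage plan as the paper: first establish starshapedness (your Step 1 is the same triangle-inequality argument for $I(z,s)\subseteq I(u,z)$ combined with the hypothesis $I(z,s)\subseteq T$), then identify the shape by locating the non-trivial block $B$ of $I(u,z)$ closest to $z$ and showing that $T'$ cannot extend past the two extremal convex corners. The contradiction driving your claim (c) --- that a vertex $s\in T'$ beyond $v_1$ or $v_2$ would have $I(z,s)$ not a single path, contradicting that $T'$ is a starshaped tree --- is exactly the paper's argument, and your final formulation ``$|I(z,s)\cap S_i(z)|\ge 2$ for some $i$'' captures it correctly. You go a bit further than the paper by explicitly treating the degenerate case where $I(u,z)$ is a single shortest path.

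Two small caveats. First, your intermediate phrasing that $I(z,s)$ ``would have to pass through a concave corner of $I$'' is not the right mechanism: even in a full (un-burned) lozenge with no concave corners at all, a vertex $s$ one step past the side corner $v_1$ already has $I(z,s)$ forming a proper sub-lozenge. What matters is that past a \emph{convex} corner the interval to $z$ acquires width $\ge 2$ on some sphere --- which is what your last sentence actually says, and what the paper phrases as ``$I(v,z)\cap B$ is non-trivial''. Second, your claim (a) asserts $I(z,z_0)\subseteq T'$ ``since it lies in any shortest-path tree structure'', but this is not justified: containment in $T'$ requires $z_0\in T$, which does not follow from the geometry of $I(u,z)$ alone. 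The paper's proof asserts the inclusion $P_0\cup P_1\cup P_2\subseteq T'$ with equally light justification, so this is a shared imprecision rather than something you alone missed, but it is worth being aware that the direction that is actually needed and proved carefully (here and in the paper) is $T'\subseteq P_0\cup P_1\cup P_2$; this is also the only direction used downstream in Lemma~\ref{lem_exits}.
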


\begin{proof}
    Since $T' \subseteq T$, $T'$ is a forest.
    Pick two vertices $v$ and $w$ of $T'$.
    Then $v \in I(u,z)$ implies that $I(v,z) \subseteq I(u,z)$. Since $v$
    belongs to $T$ and $T$  is starshaped, necessarily $I(v,z) \subseteq T$.
    It follows that $I(v,z) \subseteq T'$ and $I(w,z) \subseteq T'$.
    Consequently, $T'$ is a starshaped tree.

    To prove the second assertion, first notice that, since $u \notin T'$,
    $I(u,z)$ is not a single shortest path. This means that the graph induced
    by $I(u,z)$ contains non-trivial blocks. Therefore the vertices $z_0$,
    $v_1$, $v_2$ and the paths $P_0$, $P_1$, $P_2$ are well defined.
    Moreover, $P_0 \cup P_1 \cup P_2 \subseteq T'$ as a consequence of Lemma
    \ref{lem_interval_shape} and the definition of the paths $P_0, P_1$ and
    $P_2$.
    To prove the converse inclusion $T' \subseteq P_0 \cup P_1 \cup P_2$,
    suppose by way of contradiction that there exists a vertex $v \in T'
    \setminus (P_0 \cup P_1 \cup P_2)$.
    Then $I(v,z)$ is an increasing path. From the structure of $I(u,z)$
    given by Lemma \ref{lem_interval_shape} and from the choice of $v \in T'
    \setminus (P_0 \cup P_1 \cup P_2)$, we conclude that $v$ is a regular
    border or a convex corner of the block $B$.
    Suppose without loss of generality that $v$ belongs to the same border path
    as $v_1$ (see Figure \ref{fig_tripod}). Since $v \ne v_1$, $I(v,z) \cap B$
    is non-trivial, and thus $I(v,z)$ is not a single path, contrary to the
    assumption that $v \in T'$.
    This establishes the converse inclusion, and thus $T' = P_0 \cup P_1 \cup
    P_2$.
\end{proof}

\begin{figure}[htb]
    \centering
    \includegraphics[width=0.7\linewidth]{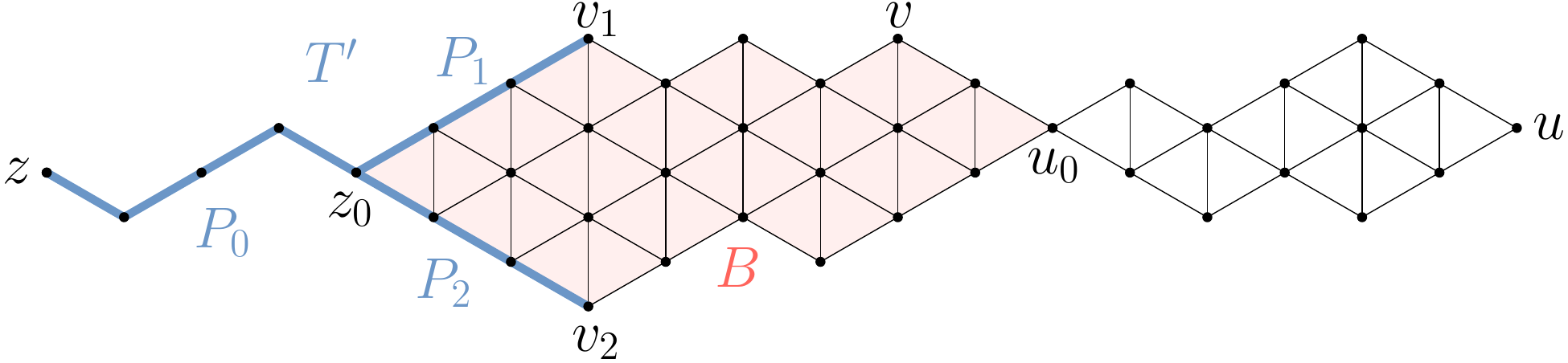}
    \caption{
        \label{fig_tripod}
        Illustration of Lemma \ref{lem_2_increasing_paths}.
    }
\end{figure}

\section{Stars and fibers}
\label{sect_stars_in_dim2_bridged}

\subsection{Projections and stars}
Let $z \in V$ be an arbitrary vertex of $G$. Since $G$ is bridged, $N[z]$ is convex.
Note that for any $u \in V \setminus N[z]$, $z$ cannot belong to
the metric projection $\mproj{u}{N[z]}$. Indeed, $z$ necessarily has a
neighbor $z'$ on a shortest $(u,z)$-path. This $z'$ is closer to $u$ than $z$,
and it belongs to $N[z]$. The projections on $N[z]$ have the following property.

\begin{lemma}
    \label{lem_mproj_on_N[z]} $\mproj{u}{N[z]}$ consists of a
    single vertex or of two adjacent vertices. Moreover, if $d_G(u,z)=k+1$ and $\mproj{u}{N[z]}=\{y,y'\}$, then there exists a unique vertex $x$ adjacent to $y$ and $y'$ and having distance $k$ to $u$.
\end{lemma}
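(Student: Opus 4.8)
The plan is to derive both assertions from three standard facts about $K_4$-free bridged graphs: balls are convex, the quadrangle and triangle conditions hold, and there are no induced $4$-cycles and no $K_4$. The starting observation is that, writing $k := d_G(u,z) - 1$, one has $d_G(u, N[z]) = k$: a neighbour $z'$ of $z$ on a shortest $(u,z)$-path lies in $N[z]$ and is closer to $u$ (which also reproves $z \notin \mproj{u}{N[z]}$, as already noted), giving $d_G(u,N[z]) \le k$, while $d_G(u,N[z]) \ge k$ is the triangle inequality through $z$. Hence every vertex of $\mproj{u}{N[z]}$ is a neighbour of $z$ lying in the sphere $S_k(u)$; in fact $\mproj{u}{N[z]} = I(u,z) \cap S_k(u)$, since a neighbour of $z$ at distance $k$ from $u$ lies on a shortest $(u,z)$-path and conversely.

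For the first assertion I would first show that any two distinct $y,y' \in \mproj{u}{N[z]}$ are adjacent. If not, I apply the quadrangle condition \qc to the quadruple $u, y, y', z$ (legitimate since $d_G(u,y) = d_G(u,y') = k$, $d_G(u,z) = k+1$ and $yz, y'z \in E$) to get $x$ with $d_G(u,x) = k-1$ and $xy, xy' \in E$; then $x \notin N[z]$ (as $k-1 < d_G(u,N[z])$), so $x \ne z$ and $x \not\sim z$, and $z,y,x,y'$ induce a forbidden $4$-cycle. Thus $\mproj{u}{N[z]}$ is a clique; since all its vertices are neighbours of $z$, a sub-clique of size $3$ together with $z$ would be a forbidden $K_4$, so $|\mproj{u}{N[z]}| \le 2$, and if it has two vertices they are adjacent. (Alternatively, one can argue entirely within Lemma~\ref{lem_interval_shape}: $\mproj{u}{N[z]} = I(u,z)\cap S_k(u)$ is a level of the burned lozenge $I(u,z)$, adjacent to the one-vertex level $I(u,z)\cap S_{k+1}(u)=\{z\}$, and consecutive levels of a burned lozenge differ in length by at most one.)

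For the second assertion, suppose $\mproj{u}{N[z]} = \{y,y'\}$; by the first part $y \sim y'$, with $y,y'\sim z$ and $d_G(u,y)=d_G(u,y')=k$. The triangle condition \tc applied to $u,y,y'$ produces a vertex $x$ adjacent to $y$ and $y'$ with $d_G(u,x) = k-1 = d_G(u,z)-2$ (and $x\notin N[z]$); this is the desired vertex. For uniqueness, if $x_1\ne x_2$ both work, then $x_1\sim x_2$ is impossible, since $\{x_1,x_2,y,y'\}$ would then induce a $K_4$; hence $d_G(x_1,x_2)=2$ and $y\in I(x_1,x_2)$; but $x_1,x_2\in B_{k-1}(u)$ and this ball is convex, so $I(x_1,x_2)\subseteq B_{k-1}(u)$, forcing $d_G(u,y)\le k-1$, a contradiction. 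So $x$ is unique. The only points requiring care — and I do not expect a genuinely hard step — are to invoke \qc rather than \tc in the first part, because the two projection vertices are not assumed adjacent there, and to use convexity of the ball $B_{k-1}(u)$ rather than of $N[z]$ in the uniqueness argument. I would also flag that the distance of $x$ to $u$ should be $k-1$, not $k$: a common neighbour of $y$ and $y'$ at distance $k$ from $u$ would, with $y$ and $y'$, form a triangle in $S_k(u)$, which Lemma~\ref{K3} forbids, whereas $k-1$ is exactly the distance delivered by \tc.
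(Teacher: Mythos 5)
Your proof is correct and follows essentially the same route as the paper's: convexity of balls, the triangle condition, and the absence of induced $C_4$ and $K_4$. The only minor variation is in the first assertion — you invoke the quadrangle condition and then exhibit a forbidden induced $4$-cycle $z,y,x,y'$, whereas the paper applies convexity of $B_k(u)$ directly (if $y\not\sim y'$ then $z\in I(y,y')\subseteq B_k(u)$, contradiction); in a bridged graph these are two ways to say the same thing, since \qc here is just \tc plus no-$C_4$. Your uniqueness argument via convexity of $B_{k-1}(u)$ is identical in substance to the paper's. You are also right to flag the misprint: the lemma's statement says the common neighbour $x$ of $y,y'$ is at distance $k$ from $u$, but the proof (and the sentence immediately following the lemma, which records $x$ at distance $d_G(y,u)-1=k-1$) shows the intended value is $k-1$.
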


\begin{proof}
Notice first that $z\notin\mproj{u}{N[z]}$ because any neighbor of $z$ in $I(z,u)$ is closer to $u$ than $z$. Suppose that $\mproj{u}{N[z]}$ contains two distinct vertices $y$ and $y'$. Since $y$ and $y'$ are different from $z$, they have distance $k$ to $u$. By convexity of the ball $B_k(u)$, we conclude that $y$ and $y'$ are adjacent. If $\mproj{u}{N[z]}$ contains a third vertex $y''$, then $y,y',y'',z$ induce a forbidden $K_4$.

So, let $\mproj{u}{N[z]}=\{y,y'\}$. Then $d_G(u,y)= d_G(u,y')=k$. Since $y\sim y'$, by triangle condition, there exists a vertex $x\sim y, y'$ and having distance $k-1$ to $u$. If there exists another such vertex $x'$, since $x$ and $x'$ belong to the ball $B_{k-1}(u)$ and are adjacent $y$ and $y'$, there must be adjacent because $B_{k-1}(u)$ is convex. Consequently, the vertices $x,x',y,y'$ induce a forbidden $K_4$.
\end{proof}

Let $u \in V$ be a vertex with two vertices $y,y'$ in  $\mproj{u}{N[z]}$.
By Lemma \ref{lem_mproj_on_N[z]}, there exists
a vertex $u'\sim y,y'$ at distance $\dist_G(y,u) - 1$ from $u$. Moreover,
$I(u',z) = \{u',z,y,y'\}$ and $y \sim y'$.
The \emph{star} $\St(z)$ of a vertex $z \in V$ consists of the neighborhood $N[z]$ of $z$ plus all $u'\notin N[z]$ having two neighbors $y$ and $y'$ in $N[z]$ (which are necessarily adjacent).
Consequently, $\St(z)$ contains the vertices of $N[z]$ and all $u'$ that can be
derived by
the triangle condition applied to two adjacent vertices $y, y' \in N[z]$ and a
vertex $u \in V$. Figure~\ref{fig_star_dim2_bridged} (1) and (2) presents two
examples of stars in $K_4$-free bridged graphs.

\begin{figure}[htb]
    \centering
    \begin{minipage}{0.49\linewidth}
        \centering
        \includegraphics[width=0.65\linewidth]{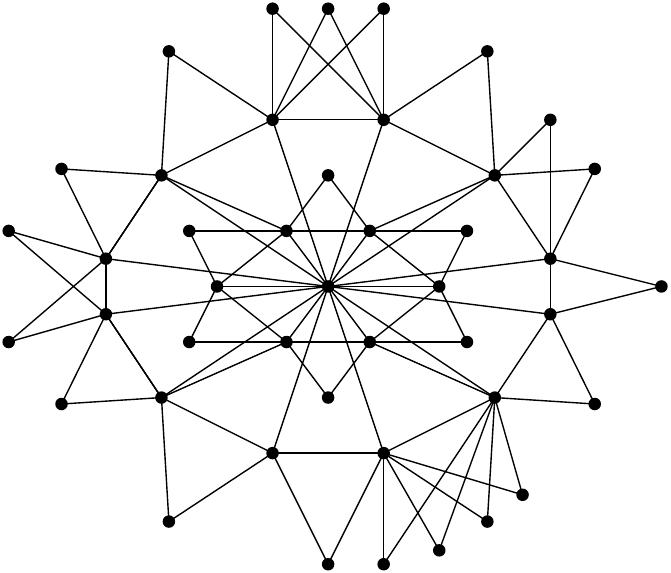}\\
        (1)
    \end{minipage}
    \begin{minipage}{0.49\linewidth}
        \centering
        \includegraphics[width=0.65\linewidth]{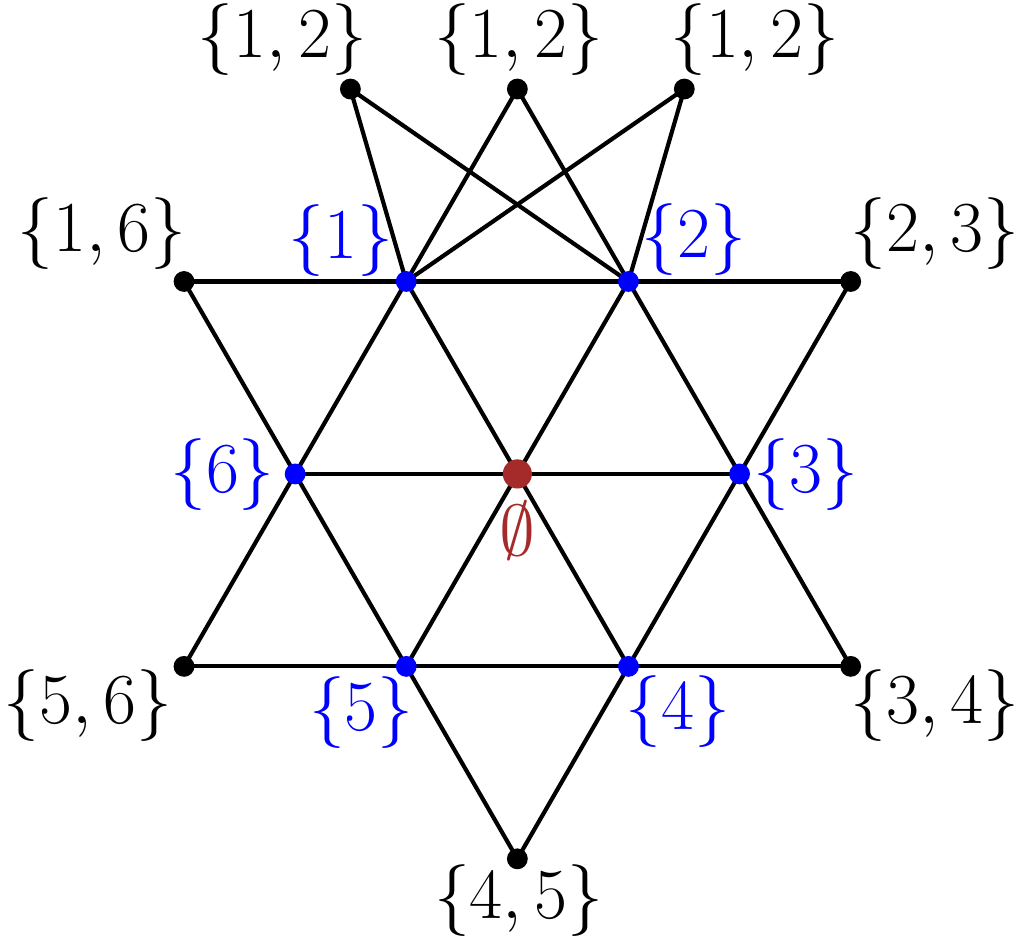}\\
        (2)
    \end{minipage}

    \begin{minipage}{0.49\linewidth}
        \centering
        \includegraphics[width=0.65\linewidth]{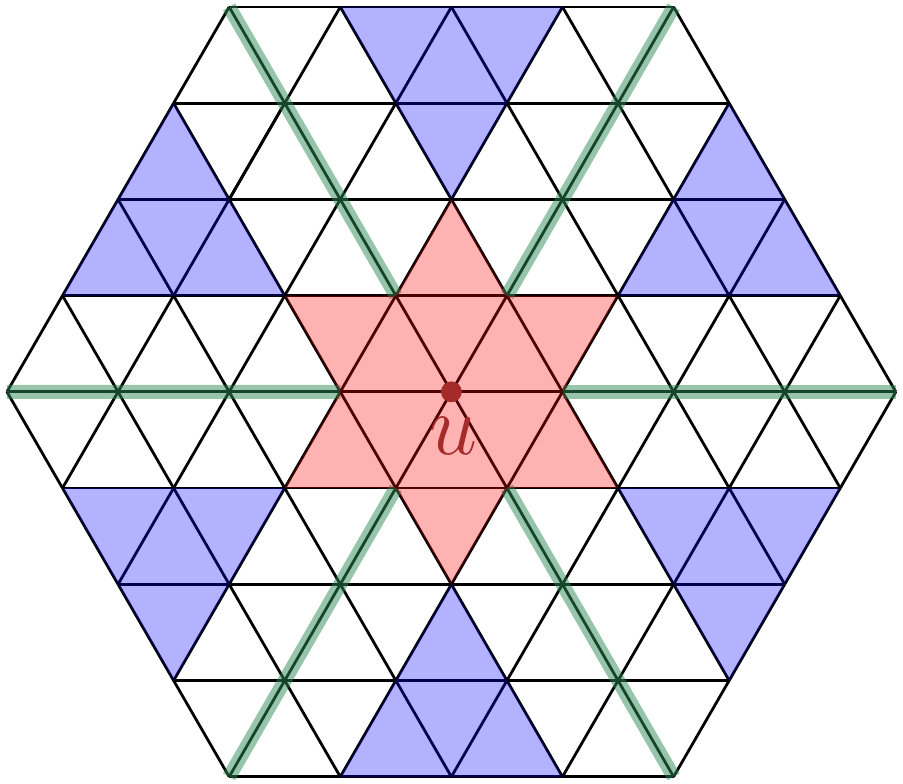}\\
        (3)
    \end{minipage}
    \caption{
        \label{fig_star_dim2_bridged}
        \label{fig_enc_star_bridged}
        Examples (1) and (2) of a star
        and the encoding of the vertices of a star.
        Example (3) of cones (in blue) and panels (in green). The star of $u$ is given in red.
    }
\end{figure}

\subsection{Cones and panels}
Let  $x \in \St(z)$. If $x \in N[z]$, we define the \emph{fiber
$F(x)$ of $x$ with respect to $\St(z)$} as the set of all vertices of $G$
having $x$ as unique projection on $N[z]$ by Lemma~\ref{lem_mproj_on_N[z]}.
Otherwise (if $\dist_G(x,z) = 2$) $F(x)$ denotes the set of all vertices $u$
such that $\mproj{u}{N[z]}$ consists of two adjacent vertices $v$ and $w$, and
such that $x$ is adjacent to $v,w$ and is one step closer to $u$ than $v$ and
$w$.
A fiber $F(x)$ such that $x\sim z$ is called a \emph{panel}. If
$\dist_G(x,z) = 2$,  then $F(x)$ is called a \emph{cone}.
Figure~\ref{fig_star_dim2_bridged} (3) illustrates cones and panels.
Two fibers $F(x)$ and $F(y)$ are called \emph{$k$-neighboring} if $\dist_{\St(z)
\setminus \{z\}}(x, y) = k$.
Notice that any cone is $1$-neighboring exactly two panels.

\begin{lemma} \label{lem_starshaped} Each fiber $F(x), x\in \St(z)$ is starshaped with respect to $x$.
\end{lemma}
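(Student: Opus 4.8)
The plan is to reduce the lemma to a purely local statement and then run an induction, so that the only genuine content lies in a single ``one‑edge'' step. The local statement I aim for is: \emph{if $u\in F(x)$ and $w$ is a neighbour of $u$ with $\dist_G(x,w)=\dist_G(x,u)-1$, then $w\in F(x)$.} Granting this, starshapedness of $F(x)$ follows by induction on $\dist_G(x,u)$: the base case $u=x$ needs only $x\in F(x)$, which I would check directly in both cases — if $x\in N[z]$ then $\mproj{x}{N[z]}=\{x\}$, while if $\dist_G(x,z)=2$ then $\dist_G(x,N[z])=1$, so $\mproj{x}{N[z]}$ equals the set of neighbours of $x$ in $N[z]$, which by the definition of $\St(z)$ together with Lemma~\ref{lem_mproj_on_N[z]} is a pair of adjacent vertices $v,v'$ with $x\sim v,v'$ and $0=\dist_G(x,x)=\dist_G(v,x)-1$; for the inductive step, given $w\in I(x,u)$ with $w\neq u$ I would take the penultimate vertex $w'$ of a shortest $(x,u)$-path through $w$, apply the local statement to get $w'\in F(x)$, and then use the induction hypothesis (legitimate since $\dist_G(x,w')<\dist_G(x,u)$ and $w\in I(x,w')$).

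For the local statement I would first record the elementary identity $\dist_G(y,N[z])=\dist_G(y,z)-1$ for every $y\notin N[z]$ (a neighbour of $z$ on a shortest $(y,z)$-path witnesses ``$\le$'', and the triangle inequality gives ``$\ge$''), and note that the case $\dist_G(x,u)\le 1$ is immediate, so one may assume $\dist_G(x,u)\ge2$, hence $w\notin\{x,u\}$. In the \emph{panel case} ($x\in N[z]$, $\mproj{u}{N[z]}=\{x\}$): put $k:=\dist_G(u,x)=\dist_G(u,N[z])\ge 2$, so $\dist_G(u,z)=k+1$ and $\dist_G(w,x)=k-1$; a short distance computation using $w\sim u$ gives $\dist_G(w,z)=k$, hence $w\notin N[z]$ and $\dist_G(w,N[z])=\dist_G(w,x)=k-1$, so $x\in\mproj{w}{N[z]}$. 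If some $y\in N[z]\setminus\{x\}$ also satisfied $\dist_G(w,y)=k-1$, then $\dist_G(u,y)\le\dist_G(u,w)+\dist_G(w,y)=k=\dist_G(u,N[z])$, forcing $y\in\mproj{u}{N[z]}$ and contradicting $\mproj{u}{N[z]}=\{x\}$; hence $\mproj{w}{N[z]}=\{x\}$ and $w\in F(x)$.

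In the \emph{cone case} ($\dist_G(x,z)=2$, $\mproj{u}{N[z]}=\{a,b\}$ with $a\sim b$, $x\sim a,b$, and $\dist_G(x,u)=\dist_G(a,u)-1=:\ell\ge 2$) I would show that the \emph{same} pair $\{a,b\}$ witnesses $w\in F(x)$. On one side, $\dist_G(a,w)\le\dist_G(a,x)+\dist_G(x,w)=1+(\ell-1)=\ell$, and likewise $\dist_G(b,w)\le\ell$. On the other side, from $\dist_G(a,u)=\dist_G(u,N[z])=\ell+1$ one gets $\dist_G(u,z)=\ell+2$, so $\dist_G(w,z)\ge\ell+1\ge 3$, whence $w\notin N[z]$ and $\dist_G(w,N[z])\ge\ell$. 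Squeezing, $\dist_G(a,w)=\dist_G(b,w)=\ell=\dist_G(w,N[z])$, so $a,b\in\mproj{w}{N[z]}$, and Lemma~\ref{lem_mproj_on_N[z]} forces $\mproj{w}{N[z]}=\{a,b\}$; since $x\sim a,b$ and $\dist_G(x,w)=\ell-1=\dist_G(a,w)-1$, we conclude $w\in F(x)$.

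I do not expect any serious obstacle: no structural input beyond Lemma~\ref{lem_mproj_on_N[z]} is needed, and the monotonicity of metric projections along geodesics is doing all the work. The only point requiring care is bookkeeping the degenerate low-distance cases and, in each case, verifying that the relevant vertex lies outside $N[z]$ so that $\dist_G(\cdot,N[z])=\dist_G(\cdot,z)-1$ may be applied; the cone case — choosing the correct witnessing pair $\{a,b\}$ and obtaining the two-sided distance bound — is the mildly delicate part.
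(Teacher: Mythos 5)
Your proposal is correct and, at its core, does the same thing the paper does: show that for $w\in I(x,u)$ the metric projection $\mproj{w}{N[z]}$ coincides with $\mproj{u}{N[z]}$, which immediately places $w$ in $F(x)$. The paper compresses this into a one-line assertion, namely that $w\in I(u,x)$ implies $\mproj{w}{N[z]}\subseteq\mproj{u}{N[z]}$, and then picks out $x$ (resp.\ $x_1,x_2$) in the projection of $w$; you instead reduce to a one-edge step $w\sim u$, $\dist_G(x,w)=\dist_G(x,u)-1$, verify the equality of projections there by explicit distance chasing (using $\dist_G(\cdot,N[z])=\dist_G(\cdot,z)-1$ outside $N[z]$ and Lemma~\ref{lem_mproj_on_N[z]} to cap the projection at two vertices), and propagate along $I(x,u)$ by induction. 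Your version is somewhat longer but more self-contained — it effectively supplies a proof of the projection-monotonicity step that the paper states without justification — and the bookkeeping (base case $x\in F(x)$, ruling out $w\in N[z]$, identifying $x$ as the unique apex in the cone case) is all handled correctly.
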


\begin{proof} Pick $u \in F(x)$ and $w \in I(u,x)$. Then $\mproj{w}{N[z]} \subseteq \mproj{u}{N[z]}$. If $F(x)$ is a panel then $x$ is the unique projection of $u$ on $N[z]$. Since $w\in I(u, x)$, $x$ is also the unique projection of $w$ on $N[z]$. Thus $w$ belongs to $F(x)$.
If $F(x)$ is a cone, then the projection of $u$ on $N[z]$ consists of two vertices $x_1$, $x_2$ both adjacent to $z$ and $x$. Again, since $w\in I(u,x)$, $x_1$ and $x_2$ are projections of $w$ on $N[z]$, yielding $w\in F(x)$.
\end{proof}

\begin{lemma}\label{lem_c2c_and_p2p}
    Let $u\in F(x),v\in F(y)$ and $F(x)\ne F(y)$. If the fibers $F(x)$ and $F(y)$ are both cones or
    are both panels, then the vertices $u$ and $v$ are not adjacent.
\end{lemma}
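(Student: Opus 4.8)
The plan is to suppose, for contradiction, that $u \in F(x)$ and $v \in F(y)$ are adjacent, with $F(x) \ne F(y)$ and both fibers of the same type. I would distinguish the two cases (both panels, both cones) and in each derive a forbidden configuration (an induced $C_4$ or $K_4$, or a violation of the convexity of some ball or of the triangle condition) using the structure of projections onto $N[z]$ furnished by Lemma~\ref{lem_mproj_on_N[z]}.

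First consider the \emph{panel} case: $x \sim z$, $y \sim z$, $x \ne y$, and $u, v$ are adjacent with $\mproj{u}{N[z]} = \{x\}$ and $\mproj{v}{N[z]} = \{y\}$. Write $k := \dist_G(u,z)$; I would argue that $\dist_G(v,z) \in \{k-1,k,k+1\}$ since $u \sim v$. If $\dist_G(u,z) = \dist_G(v,z) = k$, then both $u$ and $v$ lie on the sphere $S_k(z)$, and by the triangle condition there is a common neighbor $w$ of $u$ and $v$ at distance $k-1$ from $z$; projecting $w$, $u$, $v$ down toward $z$ and using that $x$ (resp.\ $y$) is the \emph{unique} projection of $u$ (resp.\ $v$) on $N[z]$ should force $x$ and $y$ to coincide or to contradict uniqueness. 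If instead $\dist_G(v,z) = k+1$ (the symmetric case $k-1$ being identical after swapping $u,v$), then $v$ has a neighbor on a shortest $(v,z)$-path at distance $k$, and combining this with $u$ and the convexity of $B_k(z)$ should again pin the projections together. Throughout, the key mechanism is that a single vertex cannot have a projection on $N[z]$ that is simultaneously ``near $x$'' and ``near $y$'' unless $F(x) = F(y)$.

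For the \emph{cone} case: $\dist_G(x,z) = \dist_G(y,z) = 2$, $u \in F(x)$ has $\mproj{u}{N[z]} = \{x_1,x_2\}$ with $x_1,x_2 \sim z$ and $x_1,x_2 \sim x$, and similarly $v \in F(y)$ has $\mproj{v}{N[z]} = \{y_1,y_2\}$; assume $u \sim v$. Since $\{x_1,x_2\}$ and $\{y_1,y_2\}$ are both edges of $N[z]$ (hence lie in the convex, triangle-free-in-spheres set $N[z]$), I would first observe that $u \sim v$ forces $\dist_G(u,z)$ and $\dist_G(v,z)$ to be equal (a cone vertex and its projections differ by one step, and an edge changes distance to $z$ by at most one; a short parity/interval argument rules out the $\pm 1$ possibilities because both projection-pairs sit at the same distance~$2$). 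Then $u$ and $v$ lie on a common sphere $S_k(z)$ with $k = \dist_G(u,z) \ge 2$; applying the triangle condition to $u \sim v$ yields a common neighbor $w \in S_{k-1}(z)$, and then pushing $w$ one further step toward $z$ and invoking Lemma~\ref{lem_mproj_on_N[z]} (uniqueness of the apex above an edge of $N[z]$) should identify $x$ with $y$, hence $F(x) = F(y)$, the desired contradiction. The deltoid/flat-triangle structure (Lemma~\ref{lem_MT_structure_K4-free}) is available to exclude stray chords that would otherwise produce an induced $4$-cycle.

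The main obstacle I anticipate is the bookkeeping of distances to $z$ along the edge $uv$: one must rule out cleanly the cases where $u$ and $v$ are on \emph{different} spheres around $z$, because then $u$ and $v$ need not share a common neighbor closer to $z$ and the argument must instead trace a shortest path from the farther vertex down through $N[z]$ and compare its penultimate vertex with both projection sets. Handling this uniformly — ideally by reducing, via the convexity of balls $B_k(z)$, to the equidistant situation and then running the triangle-condition argument once — is the delicate part; once the two vertices are known to be equidistant from $z$, the contradiction via Lemma~\ref{lem_mproj_on_N[z]} is short.
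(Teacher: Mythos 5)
Your strategy is the same as the paper's (reduce to equidistance from the bases, then contradict via projections on $N[z]$ and the triangle condition), but as written it has two gaps.

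First, the equidistance step. You propose a ``parity/interval argument'' to force $\dist_G(u,z) = \dist_G(v,z)$, but there is no parity obstruction: an edge $uv$ only gives $|\dist_G(u,z) - \dist_G(v,z)| \le 1$, and nothing forbids a difference of $1$ a priori. What actually closes this is the following short argument, which the paper uses. Write $k := \dist_G(v,y)$. If $\dist_G(u,x) > k$, then $\dist_G(u,y) \le \dist_G(v,y) + 1 = k+1 \le \dist_G(u,x) = \dist_G(u,\St(z))$; hence $y \in \mproj{u}{\St(z)}$. But the apex $x$ of the fiber containing $u$ is the unique closest vertex of $\St(z)$ to $u$, so $y = x$ and $F(x) = F(y)$, a contradiction. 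The symmetric case $\dist_G(u,x) < k$ is identical. This is the clean route; your ``trace a shortest path from the farther vertex down through $N[z]$'' plan can be made to work but is considerably longer.

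Second, and more seriously, your treatment of the cone case has a hole. You take the common neighbor $w$ of $u$ and $v$ one step closer to $z$ (from the triangle condition) and then argue $\mproj{w}{N[z]} \subseteq \mproj{u}{N[z]} \cap \mproj{v}{N[z]}$ to identify the two apexes via Lemma~\ref{lem_mproj_on_N[z]}. This yields a contradiction only when the right-hand intersection is \emph{empty}, which is the case precisely when the two cones are $r$-neighboring for $r > 2$. The cones can also be $2$-neighboring: two cones $F(x)$, $F(y)$ that share a common $1$-neighboring panel $F(a)$ have $\mproj{u}{N[z]} \cap \mproj{v}{N[z]} = \{a\}$. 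In that case $\mproj{w}{N[z]} = \{a\}$ and $w$ falls into the panel $F(a)$; there is no ``apex above an edge of $N[z]$'' to speak of, and Lemma~\ref{lem_mproj_on_N[z]} gives you nothing. The paper handles $2$-neighboring cones by a different mechanism: the shared vertex $a$ lies on a shortest $(x,y)$-path, yet $x, y \in B_k(\{u,v\})$ while $a \notin B_k(\{u,v\})$ (since $\dist_G(u,a) = \dist_G(v,a) = k+1$), contradicting the convexity of the ball $B_k(\{u,v\})$ around the convex set $\{u,v\}$. The $1$-neighboring case (adjacent apexes) is also handled separately, by extracting a forbidden induced $C_4$, $C_5$, or $K_4$. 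Your proof needs this three-way split on the cone case; the single projection argument you propose does not cover all of it. (By contrast, in the panel case the two projection sets are singletons $\{x\}$ and $\{y\}$ and hence always disjoint when $F(x) \ne F(y)$, so the projection argument does close that case without any further split.)
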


\begin{proof}
    Suppose  $u\sim v$. Notice that this implies that $\dist_G(u,x) = \dist_G(v,y) =:k$.
    Indeed, if $\dist_G(u,x) > k$, then $y \in \mproj{u}{\St(z)}$ and if
    $\dist_G(u,x) < k$, then $x \in \mproj{v}{\St(z)}$.

    First, let $F(x)$ and $F(y)$ be two cones.
    If $F(y)$ and $F(y)$ are 1-neighboring $F(x)$, then $x\sim y$ and $G$ will contain a forbidden $C_5, C_4$, or $K_4$.
    If $F(x)$ and $F(y)$ are 2-neighboring, then there exists a vertex $w \in \St(z)$ adjacent to $x$ and $y$ and at distance $k+1$ to $u$ and $v$. Then
    $x,y\in B_k(\{ u,v\})$ and $w \notin B_{k}(\{u,v\})$, contrarily to
    convexity
    of  $B_k(\{ u,v\})$. Thus, the cones $F(x)$ and $F(y)$ are $r$-neighboring
    for some $r > 2$. This implies that $\mproj{u}{N[z]} \cap \mproj{v}{N[z]}=\varnothing$.
    By the triangle condition, there exists a vertex $t \sim u, v$ at distance
    $k + 1$ from $z$. Since $t\in I(u,z) \cap I(v,z)$, we conclude that
    $\mproj{t}{N[z]}\subseteq \mproj{u}{N[z]} \cap \mproj{v}{N[z]}$. This is impossible
    since $\mproj{t}{N[z]}\neq \varnothing$ and $\mproj{u}{N[z]} \cap 
    \mproj{v}{N[z]}=\varnothing$.
    Now, let $F(x)$ and $F(y)$ be two panels. Then $\mproj{u}{N[z]} = \{x\}$ and $\mproj{v}{N[z]} = \{y\}$ and thus $\mproj{u}{N[z]}\cap \mproj{v}{N[z]} = \varnothing$.
    Since $\dist_G(u,x) = \dist_G(v,y) = k$ and $\dist_G(u,z) = \dist_G(v,z) = k+1$,
    $x,y\in B_k(\{u,v\})$ and $z\notin B_k(\{u,v\})$.  Since $u\sim v$, $B_k(\{u,v\})$ is convex, $x$ and $y$ must be adjacent.
    Since $u$ and $v$ are adjacent and are at distance $k + 1$ from $z$, by triangle condition, there exists a vertex $t \sim u, v$ with $\dist_G(t,z) = k$.
    Since $t\in I(u,z) \cap I(v,z)$, we have $\mproj{t}{N[z]}\subseteq 
    \mproj{u}{N[z]} \cap \mproj{v}{N[z]} = \varnothing$, which is impossible.
\end{proof}

\begin{lemma}
    \label{lem_dist_to_base}
    Let  $u \in F(x), v \in F(y)$, and $u\sim v$.
    If $F(y)$ is a cone and  $F(x)$ is a panel, then $x\sim y$ and  $\dist_G(u,y) =
    \dist_G(u,x) \in \{k, k + 1\}$, where  $k := \dist_G(v,y)$.
\end{lemma}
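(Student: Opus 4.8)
Set $k:=\dist_G(v,y)$. Since $v\in F(y)$ and $F(y)$ is a cone, Lemma~\ref{lem_mproj_on_N[z]} gives $\mproj{v}{N[z]}=\{y_1,y_2\}$ with $y_1\sim y_2$, $y\sim y_1$, $y\sim y_2$, $\dist_G(v,y_1)=\dist_G(v,y_2)=k+1$ and $\dist_G(v,z)=k+2$ (the last because $z\notin\mproj{v}{N[z]}$). Write $m:=\dist_G(u,x)$; as $u\in F(x)$ with $F(x)$ a panel, $\mproj{u}{N[z]}=\{x\}$, hence $\dist_G(u,N[z])=m$ and $\dist_G(u,z)=m+1$. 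The single elementary ingredient I will use repeatedly is: for $a\notin N[z]$ one has $\dist_G(a,N[z])=\dist_G(a,z)-1$, and if moreover $a\in I(b,z)$ with $b\notin N[z]$ then $\mproj{a}{N[z]}\subseteq\mproj{b}{N[z]}$ (triangle inequality).

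First I would pin down $m$ and prove $x\sim y$. From $u\sim v$ we get $|\dist_G(u,z)-\dist_G(v,z)|\le 1$, so $m\in\{k,k+1,k+2\}$. If $m=k+2$ then $v\in I(u,z)$, whence $\{y_1,y_2\}=\mproj{v}{N[z]}\subseteq\mproj{u}{N[z]}=\{x\}$, which is impossible; so $m\in\{k,k+1\}$, already giving $\dist_G(u,x)\in\{k,k+1\}$. To see that $x\in\{y_1,y_2\}$: if $m=k$ then $u\in I(v,z)$, so $\{x\}=\mproj{u}{N[z]}\subseteq\{y_1,y_2\}$; if $m=k+1$ then $\dist_G(z,u)=\dist_G(z,v)$, so the triangle condition \tc yields $t\sim u,v$ with $\dist_G(t,z)=k+1$, and then $t\in I(u,z)\cap I(v,z)$ forces $\varnothing\ne\mproj{t}{N[z]}\subseteq\{x\}\cap\{y_1,y_2\}$. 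In both cases, up to swapping $y_1$ and $y_2$, $x=y_1$, and $y_1\sim y$ gives $x\sim y$.

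Then I would compute $\dist_G(u,y)$. Since $y_1=x$ is the unique vertex of $N[z]$ at distance $m$ from $u$ and $y_2\sim y_1$ lies in $N[z]\setminus\{y_1\}$, we get $\dist_G(u,y_2)=m+1$; applying the triangle inequality along $yy_2$ and $yy_1$ yields $m=\dist_G(u,y_2)-1\le\dist_G(u,y)\le\dist_G(u,y_1)+1=m+1$, and $\dist_G(u,y)\le\dist_G(u,v)+\dist_G(v,y)=k+1$. Hence if $m=k+1$ we get $\dist_G(u,y)=m$ immediately. The remaining task is to exclude $\dist_G(u,y)=k+1$ when $m=k$. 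Assuming it, the chain $\dist_G(v,y_1)=k+1=\dist_G(v,u)+\dist_G(u,y_1)=\dist_G(v,y)+\dist_G(y,y_1)$ places both $u$ and $y$ in the interval $I(v,y_1)$ — $u$ a neighbour of the tip $v$, $y$ a neighbour of the tip $y_1$ — while $\dist_G(u,y)=k+1=\dist_G(v,y_1)$. By Lemma~\ref{lem_interval_shape}, $I(v,y_1)$ is a burned lozenge: its spheres $S_i(v)\cap I(v,y_1)$ are paths, linked as in the triangular grid, with $u\in S_1(v)\cap I(v,y_1)$ and $y\in S_1(y_1)\cap I(v,y_1)$. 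Following a geodesic of $I(v,y_1)$ from $u$ up to $y_1$ and using that it meets $y$'s sphere in a neighbour of $y$ would give $\dist_G(u,y)\le k$ (equivalently: a neighbour of one tip and a neighbour of the other tip of a burned lozenge cannot realise its diameter without creating a forbidden induced $4$-cycle between the two bounding geodesics) — a contradiction, so $\dist_G(u,y)=k=m$.

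The hard part is precisely this last case. Steps one and two, and the short distance estimate, are ``soft'': they use only convexity of balls, the triangle condition, and elementary distance inequalities, in the spirit of the proof of Lemma~\ref{lem_c2c_and_p2p}. Excluding $\dist_G(u,y)=k+1$ in the case $m=k$, by contrast, needs the fine combinatorial geometry of intervals (burned lozenges) from Lemma~\ref{lem_interval_shape}, and that is where I expect the bulk of the work to be.
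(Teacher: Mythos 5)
Your proof is correct (modulo a degenerate edge case, $k=0$, $u=x$, $v=y$, that the paper's own proof silently excludes by taking a neighbour of $x$ in $I(x,u)$), and it reaches the conclusion through a genuinely different route at two of the three stages.

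For establishing $x\in\{y_1,y_2\}$ you use the ``projection-inclusion'' pattern directly: in the case $m=k$ you observe $u\in I(v,z)$ and push $\mproj{u}{N[z]}\subseteq\mproj{v}{N[z]}$; in the case $m=k+1$ you invoke \tc to manufacture a common neighbour $t$ of $u,v$ with $\mproj{t}{N[z]}\subseteq\mproj{u}{N[z]}\cap\mproj{v}{N[z]}\ne\varnothing$. The paper instead argues by contradiction: assuming $x\notin\{y_1,y_2\}$ it first forces $\dist_G(u,x)=k+1$ and $\dist_G(u,z)=\dist_G(v,z)=k+2$, then uses convexity of the ball $B_{k+1}(\{u,v\})$ around the edge $uv$ (and $z\notin B_{k+1}(\{u,v\})$) to get $x\sim y_1,y_2$, hence a forbidden $K_4$ on $\{x,y_1,y_2,z\}$. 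Both arguments are sound; yours is closer in spirit to the proof of Lemma~\ref{lem_c2c_and_p2p} and arguably more uniform across the two subcases.

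Where you diverge most sharply is the last and hardest step, excluding $\dist_G(u,y)=k+1$ when $m=k$. The paper's argument is a three-line convexity trick: take $x'\sim x$ on $I(x,u)$, note $y,x'\in B_k(v)$ and $x\notin B_k(v)$, so convexity of $B_k(v)$ forces $y\sim x'$, whence $\dist_G(u,y)\le\dist_G(u,x')+1=k$. You instead invoke the burned-lozenge structure of $I(v,x)$ (Lemma~\ref{lem_interval_shape}): since $u\in I(v,x)$ is a neighbour of $v$ and $y\in I(v,x)$ is a neighbour of $x$, any geodesic from $u$ to $x$ inside $I(v,x)$ passes through the sphere $S_k(v)\cap I(v,x)=S_1(x)\cap I(v,x)$, which has at most two vertices all adjacent to $x$ and hence is a path of length at most one containing $y$; so $\dist_G(u,y)\le k$. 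That argument is correct, but it is significantly heavier machinery than what is needed (it pulls in the full structure theorem for intervals), and as written it is a bit terse: the step ``it meets $y$'s sphere in a neighbour of $y$'' needs the explicit observation that $|S_1(x)\cap I(v,x)|\le 2$ because every vertex of that sphere is adjacent to the tip $x$, which has at most two neighbours there. The paper's $B_k(v)$-convexity argument gets the same conclusion in three lines without any appeal to the interval structure.
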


\begin{proof} Since $u$ and $v$ are adjacent, $\dist_G(u,y)\le k+1$.
By Lemma \ref{lem_starshaped} $F(y)$ is starshaped with respect to $y$ and $v\in F(y), u\notin F(y)$, thus
$\dist_G(u,y)\ge k$.  Consequently, $\dist_G(u,y)\in \{k, k + 1\}$.
Let $y_1$ and $y_2$ denote the two neighbors of $y$ in $\St(z)$. Then $\dist_G(v,y_1)=\dist_G(v,y_2)=k+1$
and $\dist_G(v,z)=k+2$.

First suppose that  $x$ coincides with $y_1$ or $y_2$, say $x=y_2$. Therefore,
$x\sim y$ and $\dist_G(v,x)=k+1$. It remains to show that in this case $\dist_G(u,y)=\dist_G(u,x)$. Let $x'$ be a neighbor
of $x$ in $I(x,u)$. If $\dist_G(u,x)=k$, then $y,x'\in B_{k}(v)$ and $x\notin B_k(v)$. By the convexity of $B_k(v)$,
$y$ and $x'$ are adjacent. This implies that $\dist_G(u,y)\le k$. Since $\dist_G(u,y)\ge k$, we conclude that $\dist_G(u,y)=k=\dist(u,x)$.
Now suppose that $\dist_G(u,x)=k+1$. If $\dist_G(u,y)=k$, then this would imply that $u$ must belong to the cone $F(y)$, a contradiction.
This establishes the assertion of the lemma when $x\in \{ y_1,y_2\}$.

We show that $x\in\{y_1, y_2\}$. Assume by contradiction that $x$ is different from $y_1$ and $y_2$. This implies that $\dist_G(v,x)>k+1=\dist_G(v,y_1)=\dist_G(v,y_2)$ and since $v\sim u$,
we conclude that $\dist_G(u,x)\ge k+1$. Analogously, since $\dist_G(u,y_1)\le k+2$ and $\dist_G(u,y_2)\le k+2$ and are both longer than $\dist_G(u,x)$,
we must have $\dist_G(u,x)=k+1$ (and  $\dist_G(u,y_1)=\dist_G(u,y_2)=k+2$). Thus $\dist_G(u,z)=k+2=\dist(v,z)$. Consider the ball $B_{k+1}(\{ u,v\})$ of radius $k+1$ around the convex set
$\{ u,v\}$, which must be convex. Since $y_2,y_2,x\in B_{k+1}(\{ u,v\})$ and $z\notin B_{k+1}(\{ u,v\})$, the convexity of $B_{k+1}(\{ u,v\})$  implies that $x\sim y_1,y_2$.
Consequently, we obtain the forbidden $K_4$ induced by the vertices $y_1,y_2,z,x$ and thus a contradiction.
\end{proof}

\subsection{Partition of $G$ into fibers}

We continue by showing that the fibers of any star of $G$ defines a partition of $G$ into cones and panels.

\begin{lemma}
    \label{lem_bridged_and_starshaped_partitionning}
    $\mcf_z := \{ F(x) : x \in \St(z) \}$ defines a partition of  $G$.
    Any fiber $F(x)$ is a bridged isometric subgraph of $G$ and $F(x)$ is starshaped
    with respect to $x$.
\end{lemma}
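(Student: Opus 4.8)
The plan is to split the statement into four parts about each fiber $F(x)$, $x \in \St(z)$: (a) the family $\mcf_z$ partitions $V$; (b) $F(x)$ is starshaped with respect to $x$; (c) $F(x)$ is an isometric subgraph of $G$; (d) $G[F(x)]$ is bridged. Part (b) is exactly Lemma~\ref{lem_starshaped}, so nothing new is needed, and it already yields that $F(x)$ is connected, since $I(x,u) \subseteq F(x)$ for every $u \in F(x)$. Part (d) is a formal consequence of (c): if a cycle $C$ of $G[F(x)]$ is isometric in $G[F(x)]$, then, $F(x)$ being isometric in $G$, we get $d_C(a,b) = d_{G[F(x)]}(a,b) = d_G(a,b)$ for all $a,b \in V(C)$, so $C$ is an isometric cycle of $G$; as $G$ is bridged, $|V(C)| = 3$. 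Hence $G[F(x)]$ has no isometric cycle of length $\ge 4$ and is bridged. So the real work lies in (a) and (c).

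For (a), the input is Lemma~\ref{lem_mproj_on_N[z]}: for every $u \in V$, $\mproj{u}{N[z]}$ is either a single vertex of $N[z]$ or a pair of adjacent vertices of $N[z]$. If $\mproj{u}{N[z]} = \{x\}$ then $u \in F(x)$, and $u$ belongs to no other panel (panels are indexed by their one-element projection) and to no cone (members of cones have a two-element projection). If $\mproj{u}{N[z]} = \{y,y'\}$ with $y \sim y'$, then Lemma~\ref{lem_mproj_on_N[z]} supplies a \emph{unique} vertex $x$ with $x \sim y,y'$ and $d_G(u,x) = d_G(u,y) - 1$; moreover $x \notin N[z]$, since otherwise $x,y,y',z$ would induce a $K_4$ (the vertices $x,y,y'$ are pairwise adjacent and each adjacent to $z$), so $d_G(x,z) = 2$, $x \in \St(z)$, and $u \in F(x)$, while $u$ belongs to no other cone (the pair $\{y,y'\}$, hence $x$, is determined by $u$) and to no panel. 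Therefore every vertex lies in exactly one fiber.

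For (c), fix $F := F(x)$ and $u,v \in F$, and argue by induction on $\ell := d_G(u,v)$ that $d_F(u,v) = \ell$; the cases $\ell \le 1$ are trivial, so assume $\ell \ge 2$. First, reduce to an equilateral metric triangle: let $u'v'x'$ be a quasi-median of the triplet $(u,v,x)$, so $u' \in I(u,v) \cap I(u,x)$, $v' \in I(v,u') \cap I(v,x)$, and $x' \in I(x,u') \cap I(x,v')$. By Lemma~\ref{lem_starshaped}, $I(u,x)$ and $I(v,x)$ lie in $F$, hence $u',v' \in F$ and $I(u,u') \subseteq I(u,x) \subseteq F$, $I(v,v') \subseteq I(v,x) \subseteq F$, so shortest $(u,u')$- and $(v',v)$-paths can be taken in $F$. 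If $u' \ne u$ or $v' \ne v$, then $d_G(u',v') < \ell$ and the quasi-median property furnishes a geodesic $u \rightsquigarrow u' \rightsquigarrow v' \rightsquigarrow v$, so the induction hypothesis applied to $u',v' \in F$ finishes the proof. Otherwise $uvx'$ is a metric triangle, hence (as $G$ is weakly modular) equilateral of size $\ell$ by Proposition~\ref{metric_triangles}, with $x' \in I(x,u) \cap I(x,v) \subseteq F$; by Lemma~\ref{lem_MT_structure_K4-free} the deltoid $\Delta(u,v,x')$ is a flat triangle, and two of its sides, $I(u,x') \subseteq I(u,x)$ and $I(v,x') \subseteq I(v,x)$, lie in $F$.

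It remains to exhibit a $(u,v)$-geodesic inside $F$, equivalently to show $I(u,v) \subseteq F$, equivalently $\Delta(u,v,x') \subseteq F$; this is the crux and, I expect, where essentially all the difficulty lies, precisely because $F$ is in general not convex and the flat triangle cannot be ``filled in'' for free. If some $(u,v)$-geodesic had an internal vertex in $F$, one could split it there into two geodesics of length $< \ell$ between vertices of $F$ and conclude by induction; so assume $I(u,v) \cap F = \{u,v\}$. Since $u$ is a corner of the flat triangle $\Delta(u,v,x')$, its two neighbours in that triangle --- the vertex $b$ on the side $I(u,v)$ and the vertex $a$ on the side $I(u,x')$ --- are adjacent; moreover $a \in I(u,x') \subseteq I(u,x) \subseteq F$, while $b$ is internal to $I(u,v)$ and hence $b \notin F$, say $b \in F(y)$ with $y \ne x$. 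By Lemma~\ref{lem_c2c_and_p2p}, exactly one of $F, F(y)$ is a panel and the other a cone; applying Lemma~\ref{lem_dist_to_base} (in the appropriate orientation) to the edges $ub$ and $ab$, together with the argument in its proof, forces $x \sim y$, identifies the panel apex among the two neighbours in $\St(z)$ of the cone apex, and tightly constrains the distances from $a,b,u,x'$ to $x,y,z$. Feeding these constraints into the convexity of balls around vertices and around edges (e.g.\ $B_k(u)$, $B_k(\{a,u\})$, $B_k(\{u,b\})$) together with the triangle condition should produce a forbidden $K_4$ or a forbidden induced $4$- or $5$-cycle, contradicting the hypotheses on $G$. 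Hence $I(u,v)$ has an internal vertex in $F$, and splitting as above completes the induction for (c), and therefore yields (d).
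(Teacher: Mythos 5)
Your parts (a), (b), and (d) are sound and essentially match the paper: (b) is Lemma~\ref{lem_starshaped} verbatim, (a) is the intended unwinding of Lemma~\ref{lem_mproj_on_N[z]} (and your observation that $x \notin N[z]$ because $\{x,y,y',z\}$ would be a $K_4$ is exactly right), and (d) is the standard ``isometric subgraph of bridged is bridged'' argument. The reduction in (c) — pass to a quasi-median $u'v'x'$ of $(u,v,x)$, invoke starshapedness to get $u',v',x'\in F$, and either split by induction or reduce to $uvx'$ being an equilateral metric triangle whose deltoid is a flat triangle by Lemma~\ref{lem_MT_structure_K4-free} — is also the paper's reduction.

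The gap is exactly where you flag it. ``Feeding these constraints into the convexity of balls \dots together with the triangle condition should produce a forbidden $K_4$ or a forbidden induced $4$- or $5$-cycle'' is not a proof, and the balls you list ($B_k(u)$, $B_k(\{a,u\})$, $B_k(\{u,b\})$) are not the ones that make the argument close. The paper's contradiction runs through a third vertex of the flat triangle, the common neighbour $w_1$ of $a\ (=u_1)$ and $b\ (=w)$ in the row below $I(u,v)$, whose membership in $F(x)$ is guaranteed by a \emph{minimality-of-the-deltoid} argument: if $\Delta(u,v,x')$ is a minimal counterexample with $I(u,v)\nsubseteq F(x)$, then the strictly smaller $\Delta(u_1,v_1,x')$ is not a counterexample, so the entire row $I(u_1,v_1)\subseteq F(x)$, in particular $w_1\in F(x)$. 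Your induction on $\ell:=\dist_G(u,v)$ is a weaker invariant: the inductive hypothesis gives $\dist_F(u_1,v_1)=\dist_G(u_1,v_1)$, i.e.\ that \emph{some} $(u_1,v_1)$-geodesic lies in $F$, not that the whole interval $I(u_1,v_1)$ does, so it does not secure $w_1\in F(x)$. With $w_1$ in hand the paper then, in each of the two panel/cone orientations, uses Lemma~\ref{lem_dist_to_base} to pin down the distances from $w$ and $w_1$ to the apex $y$, applies the triangle condition to $(w,w_1,y)$ to produce a vertex $t\sim w,w_1$ one step closer to $y$, observes $t\in F(y)$ by starshapedness of $F(y)$, and finally uses convexity of $B_{k-1}(x)$ to force $t\in\{u_1,w_1\}$ (else $\{u_1,w_1,w,t\}$ is a $K_4$) — a contradiction because $t$ would then lie in $F(x)\cap F(y)=\varnothing$. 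So the endgame is not a forbidden cycle but a violation of the partition already established in (a), and it requires both the auxiliary vertex $w_1\in F(x)$ and convexity of the ball around $x$, neither of which appears in your sketch. You would need to strengthen the induction to a minimal-deltoid argument and carry out this case analysis explicitly to complete (c).
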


\begin{proof}
    The fact that $\mcf_z$ is a partition follows from its definition. Since any isometric subgraph
    of a bridged graph is bridged and each fiber $F(x)$ is starshaped by Lemma \ref{lem_starshaped},
    we have to prove that $F(x)$ is isometric. Let $u$ and $v$ be two
    vertices of $F(x)$. We consider a quasi-median $u'v'x'$ of the triplet $u,v,x$.
    Since $F(x)$ is starshaped, the intervals
    $I(u',x')$, $I(x,x')$, $I(u,u')$, $I(v,v'),$ and $I(v',x')$ are all
    contained in $F(x)$. Consequently, to show that $u$ and $v$ are connected in $F(x)$ by a shortest path,
    it suffices to show that the unique shortest $(u',v')$-path in the deltoid $\Delta(u',v',x')$ belongs to $F(x)$.
    To simplify the notations, we can assume two things.
    First, since $I(u,u'), I(v,v') \subseteq F(x)$, we can let $u = u'$ and $v = v'$.
    Second, we can assume that $\Delta(u,v,x')$ is a minimal counterexample 
    with $I(u,v)\nsubseteq F(x)$.

    Let $w$ be the vertex  closest to $u$ on the $(u,v)$-shortest path of $\Delta(u,v,x')$
    such that $w \notin F(x)$. Then we can suppose that
    $u$ is adjacent to $w$, otherwise, we can replace $u$ by the
    neighbor $u'$ of $w$ in $F(x)\cap \Delta(u,v,x')$ and obtain a smaller counterexample $\Delta(u',v,x')$.
    By Lemma \ref{lem_strongly_equilateral_qm}, we conclude that $\dist_G(x,u)
    = \dist_G(x,w) = \dist_G(x,v) = k$.
    By Lemma~\ref{lem_MT_structure_K4-free}, there exist a vertex $u_1 \sim w,
    u$, a vertex $w_1 \sim u_1, w$, and a vertex $v_1 \sim v$ such that $w_1
    \in I(u_1, v_1)$.
    Applying Lemma \ref{lem_strongly_equilateral_qm} once again, we deduce that
    $\dist_G(x,u_1) = \dist_G(x,w_1) = \dist_G(x,v_1) = k - 1$.
    By the minimality choice of the counterexample, we also deduce that the
    vertices $u_1, v_1$ and $w_1$ belong to $F(x)$.
    Moreover, the deltoid $\Delta(u,v,x')$ is entirely contained in $F(x)$ (see
    Fig. \ref{fig_isometric_fibers}).
    Two cases have to be considered.

    \begin{figure}[t]
    \centering
    \includegraphics[width=0.23\linewidth]{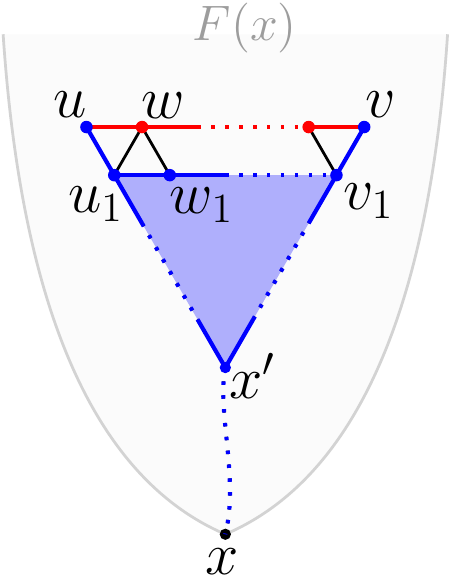}
    \caption{
        \label{fig_isometric_fibers}
        Illustration of the proof of Lemma
        \ref{lem_bridged_and_starshaped_partitionning}.
        The minimality hypothesis implies that the blue part belongs to the fiber
        $F(x)$. The proof aims to show that the red shortest $(u,v)$-path
        also belongs to $F(x)$.
    }
\end{figure}

    \smallskip\noindent
    \textbf{Case 1.} $F(x)$ is a panel.
    Then, by Lemma \ref{lem_c2c_and_p2p}, $w$ belongs to a cone $F(y)$
    1-neighboring
    $F(x)$. Since $\dist_G(w,x) = k$, $\dist_G(w,y) = k - 1$.
    Moreover, by Lemma \ref{lem_dist_to_base}, we know that $\dist_G(w_1,y) =
    \dist_G(w_1,x) = k - 1$.
    By triangle condition applied to  $w$, $w_1$, and $y$, there exists a
    vertex $t \sim w, w_1$ at distance $k - 2$ from $y$.
    Since $F(y)$ is starshaped  and  $t \in I(w,y)$, $t \in F(y)$.
    Also, since $F(y)$ is a cone, we obtain that $\dist_G(t,x) = k - 1$.
    By the convexity of $B_{k-1}(x)$, $t$ must coincide with $u_1$ or with
    $w_1$ (otherwise, the quadruplet $u_1,w_1,w,t$ would induce a $K_4$).
    Since $u_1$ and $w_1$ belong to $F(x)$,
    then $t \in F(x)$, leading  to a contradiction.

    \smallskip\noindent
    \textbf{Case 2.} $F(x)$ is a cone.
    Then $w$ belongs to a panel $F(y)$ 1-neighboring $F(x)$, and this case is
    quite
    similar to the previous one. By Lemma \ref{lem_dist_to_base}, $\dist_G(w,y)
    = \dist_G(w,x) = k$.
    Recall that $\dist_G(w_1,x) = k - 1$, $w_1 \in F(x)$ and $F(x)$ is a cone.
    Consequently $\dist_G(w_1,y) = k$.
    We thus have $\dist_G(w,y) = \dist_G(w_1,y) = k$ and, by triangle
    condition, there exists a vertex $t \sim w, w_1$ at distance $k - 1$ from
    $y$.
    Still using Lemma \ref{lem_dist_to_base}, we deduce that $\dist_G(t,x) = k
    - 1$.
    Since $F(y)$ is starshaped, we obtain that $t \in F(y)$, and from the
    convexity of the ball $B_{k-1}(x)$ we conclude that $t = w_1$ or $t = u_1$.
    Finally, $t \in F(x)$.
\end{proof}

If we choose  the star centered at a median vertex of $G$, then the number of vertices in each fiber is bounded by $|V|/2$ (the proof is similar to the proof of \cite[Lemma 10]{ChLaRa_labeling_median}). For an edge $uv\in E(G)$, let $W(u,v) := \{w : d_G(u,w)<d_G(v,w)\}$.

\begin{lemma}
    \label{lem_small_fibers_dim2_bridged}
    If $z$ is a median vertex of $G$, then for all $x \in \St(z)$, $|F(x)| \le
    |V| / 2$.
\end{lemma}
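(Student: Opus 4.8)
The plan is to reduce the statement to the classical \emph{median edge inequality}: if $z$ is a median vertex and $x'\sim z$, then $|W(x',z)|\le |V|/2$. I would first establish this: since $x'z\in E$, for every $v\in V$ the difference $\dist_G(x',v)-\dist_G(z,v)$ lies in $\{-1,0,1\}$, and it equals $-1$ exactly when $v\in W(x',z)$ and $+1$ exactly when $v\in W(z,x')$. Summing over all $v$ and using that $z$ minimizes $x\mapsto\sum_{v}\dist_G(x,v)$ yields $|W(z,x')|\ge|W(x',z)|$; as $W(x',z)$ and $W(z,x')$ are disjoint subsets of $V$, we get $2|W(x',z)|\le|V|$. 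It then suffices to show that each fiber $F(x)$ is contained in a set $W(x',z)$ for some neighbor $x'$ of $z$.

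For a \emph{panel} $F(x)$ (so $x\sim z$), I would argue that $F(x)\subseteq W(x,z)$. Take $u\in F(x)$: either $u=x\in W(x,z)$, or $u\notin N[z]$ (the only vertex of $N[z]$ whose projection on $N[z]$ is a single vertex equal to $x$ is $x$ itself), in which case $x$ is the unique vertex of $\mproj{u}{N[z]}$ while $z\notin\mproj{u}{N[z]}$, so $\dist_G(u,x)=\dist_G(u,N[z])<\dist_G(u,z)$, i.e.\ $u\in W(x,z)$. Hence $|F(x)|\le|W(x,z)|\le|V|/2$.

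For a \emph{cone} $F(x)$ (so $\dist_G(x,z)=2$), I would use that $F(x)$ is $1$-neighboring exactly two panels $F(x_1),F(x_2)$, with $x_1,x_2\in N(z)$ and $x\sim x_1,x_2$. Since $\dist_G(x,N[z])=1$, both $x_1$ and $x_2$ belong to $\mproj{x}{N[z]}$, which by Lemma~\ref{lem_mproj_on_N[z]} has at most two elements; hence $\mproj{x}{N[z]}=\{x_1,x_2\}$. Now take $u\in F(x)$; then $u\notin N[z]$ (cone membership forces $\mproj{u}{N[z]}$ to be a pair), and by definition of a cone $\mproj{u}{N[z]}=\{v,w\}$ with $v\sim w$, $v,w\in N[z]$, and $x\sim v,w$. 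Since $v,w$ are then at distance $1$ from $x$, we have $\{v,w\}\subseteq\mproj{x}{N[z]}$, so $\{v,w\}=\{x_1,x_2\}$. Consequently $\dist_G(u,x_1)=\dist_G(u,N[z])<\dist_G(u,z)$ (again $z\notin\mproj{u}{N[z]}$), so $u\in W(x_1,z)$; thus $F(x)\subseteq W(x_1,z)$ and $|F(x)|\le|V|/2$.

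The main obstacle I anticipate is the cone case, specifically pinning down $\mproj{u}{N[z]}=\{x_1,x_2\}$ uniformly over all $u\in F(x)$; this hinges on combining Lemma~\ref{lem_mproj_on_N[z]} (projections onto $N[z]$ have size at most two) with the fact that a cone is $1$-neighboring precisely two panels. Everything else is the routine median-vertex counting, following \cite[Lemma 10]{ChLaRa_labeling_median}.
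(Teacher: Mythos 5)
Your proof is correct and follows essentially the same strategy as the paper's: reduce to the median inequality $|W(x',z)|\le |V|/2$ for $x'\sim z$ (using disjointness of $W(x',z)$ and $W(z,x')$ and the defining property of a median vertex), then show each fiber $F(x)$ lies in some $W(x',z)$. The paper packages the containment $F(x)\subseteq W(x',z)$ more compactly by observing $x\in I(v,z)$ for every $v\in F(x)$ and composing with $x'\in I(x,z)$, whereas you handle panels and cones separately via explicit projection facts; both are valid and amount to the same idea.
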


\begin{proof}
    Suppose by way of contradiction that $|F(x)| > n/2$ for some vertex $x \in
    \St(z)$. Let $u$ be a neighbor of $z$ in $I(x,z)$.
    If $v\in F(x)$, then  $x\in I(v,z)$ and  $u\in I(x,z)$, and we conclude
    that $u\in I(v,z)$. Consequently, $F(x)\subseteq W(u,z)$, whence
    $|W(u,z)|>n/2$. Therefore $|W(z,u)|=n-|W(u,z)|<n/2$. But this contradicts
    the fact that $z$ is a median of $G$. Indeed, since $u\sim z$, one can
    easily show that $M(u)-M(z)=|W(c,z)|-|W(u,z)|<0$.
\end{proof}

\section{Boundaries and total boundaries of fibers}
\label{sect_boundaries}

\subsection{Starshapeness of total boundaries}
Let $x$ and $y$ be two vertices of $\St(z)$.
The \emph{boundary $\partial_{F(y)} F(x)$ of $F(x)$ with respect to $F(y)$}
is the set of all vertices of $F(x)$ having a neighbor in $F(y)$. The
\emph{total boundary} $\partial^* F(x)$ of $F(x)$ is the union of all its
boundaries (see Fig. \ref{fig_total_boundary}).

\begin{figure}[htb]
    \centering
    \includegraphics[width=0.4\linewidth]{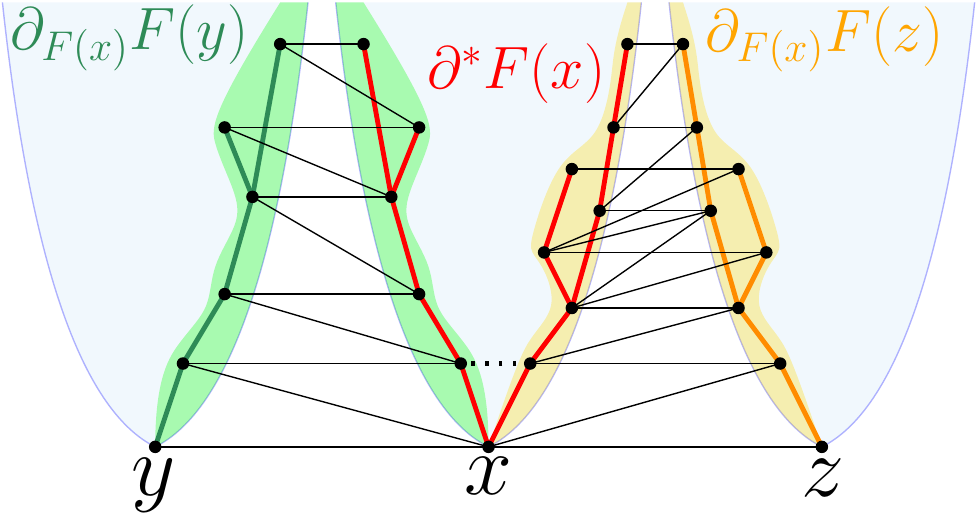}
    \caption{
        \label{fig_total_boundary}
        The boundaries $\partial_{F(x)} F(y)$ and
        $\partial_{F(x)} F(z)$ and of the total boundary $\partial^* F(x)$.
        The edges of this starshaped tree are indicated in red. The black dotted edge links two vertices of the starshaped tree but is not in the tree.
    }
\end{figure}

\begin{lemma}
    \label{lem_total_boundary_starshaped_tree}
    The total boundary $\partial^* F(x)$ of any fiber $F(x)$ is a
    starshaped tree.
\end{lemma}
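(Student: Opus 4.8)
The goal is to show that $\partial^* F(x)$ is starshaped relative to some root, and that every interval $I(z_S, s)$ inside it is a single path. Since the star-center $z$ is the natural ``focal point'' of the whole configuration, I would take the root of $\partial^* F(x)$ to be the metric projection of $z$ onto $F(x)$ — concretely, the vertex $x$ itself if $F(x)$ is a panel, and the vertex of $\partial^* F(x)$ closest to $z$ if $F(x)$ is a cone (recall that for a cone, $x$ is at distance $2$ from $z$ and the two base vertices of $x$ lie in $N[z]$, so the ``bottom'' of the cone is well understood). Call this root $b$. The two things to prove are: (1) for every $s\in\partial^* F(x)$, the interval $I(s,b)$ is contained in $\partial^* F(x)$; and (2) each such $I(s,b)$ induces a single path of $G$.

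For (2), the key observation is that $\partial^* F(x)$ is a subset of the total boundary, i.e., consists of vertices having a neighbor \emph{outside} $F(x)$. I would argue that if $I(s,b)$ were not a single path, then it would contain a non-trivial block, hence a burned lozenge with a genuine two-dimensional region (by Lemma~\ref{lem_interval_shape}); but inner and ``deep'' vertices of such a region lie strictly inside $F(x)$ and cannot have neighbors outside $F(x)$, because $F(x)$ is isometric and starshaped (Lemma~\ref{lem_bridged_and_starshaped_partitionning}) and a vertex with an outside neighbor is forced (via Lemmas~\ref{lem_c2c_and_p2p} and~\ref{lem_dist_to_base}) to sit on the ``rim'' of the fiber facing the adjacent fiber. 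More precisely, fix a boundary $\partial_{F(y)}F(x)$: its vertices all lie on a common sphere $S_{d}(?)$-type configuration relative to $y$ and, by the distance relations in Lemma~\ref{lem_dist_to_base}, form a path (no triangles on spheres, Lemma~\ref{K3}); taking the union over all $y$, I would show the pieces fit together around $b$ without creating a two-dimensional region, so the total boundary is one-dimensional — a tree — and each $I(s,b)$ in it is a single path.

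For (1), starshapedness, I would use that $F(x)$ is starshaped relative to $x$ and isometric, so for $s\in\partial^* F(x)$ with outside neighbor $s'\in F(y)$, every interval $I(s,b)$ lies in $F(x)$; the content is that each such interval lies in the \emph{boundary}, not just in $F(x)$. Here I would propagate the ``has an outside neighbor'' property down the interval toward $b$: if $t\in I(s,b)$, then using $\dist_G$-relations and the triangle condition applied at $t$ and $s'$ one produces a neighbor of $t$ in $F(y)$ (or in some fiber $1$-neighboring the relevant one), using the convexity of balls around $s'$ and around $\{s,s'\}$. This is exactly the style of argument used repeatedly in Lemmas~\ref{lem_c2c_and_p2p}--\ref{lem_bridged_and_starshaped_partitionning}, and I expect the bookkeeping of which fiber $t$'s exterior neighbor lands in (a cone vs.\ a panel, $1$- vs.\ $2$-neighboring) to be the fiddly part.

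\textbf{Main obstacle.} The real difficulty is (2): ruling out that the total boundary, assembled from several boundary paths meeting near $b$, encloses a two-dimensional region — i.e., that $\partial^* F(x)$ has a non-trivial block. Unlike a single boundary $\partial_{F(y)}F(x)$, which is easily a path, the union over all neighboring fibers could in principle ``wrap around'' $x$. I would dispatch this by showing that at any vertex $s$ of the total boundary the set of fibers containing exterior neighbors of $s$ is ``interval-like'' in the cyclic order of fibers around $z$ (using $K_4$-freeness to bound local degrees, and Lemma~\ref{lem_c2c_and_p2p} to forbid adjacencies between two cones or two panels), together with a Gauss--Bonnet count (Theorem~\ref{thm_Gauss-Bonnet}) on the hypothetical enclosed region to derive a contradiction with $K_4$-freeness / girth $\ge 6$ of vertex-links. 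Once the total boundary is known to be one-dimensional and connected through $b$ with all branches increasing from $b$, conclusions (i)--(ii) in the shape of Lemma~\ref{lem_2_increasing_paths} follow, and the starshaped-tree claim is complete.
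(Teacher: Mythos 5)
Your choice of root agrees with the paper's (for a cone, the vertex of $\partial^* F(x)$ closest to $z$ is just $x$ itself, so in both cases the root is $x$), and your sketch of part~(1) — propagating the ``has an exterior neighbor'' property down $I(v,x)$ via the triangle condition and ball convexity — is exactly the mechanism the paper's proof uses. So the starshapedness half of your plan is sound and matches the paper.

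The gap is in part~(2), and it is substantive. You treat tree-ness as a separate global problem to be solved by Gauss--Bonnet plus a ``cyclic order of fibers around $z$''. That cyclic order does not exist in general: $K_4$-free bridged graphs need not be planar (the paper points out they may contain arbitrary $K_n$ minors), so an argument built on a planar embedding of the star's fibers is not available. Moreover, this global machinery is unnecessary. The paper's observation is that the \emph{same} local propagation step you use for~(1) already forces tree-ness: if $v\in\partial^* F(x)$ has exterior neighbor $u$ (chosen closest to the root of the adjacent fiber), then every neighbor $w$ of $v$ in $I(v,x)$ is adjacent to $u$ by convexity of the appropriate ball; consequently $v$ cannot have two such neighbors $w,w'$, since $w\sim w'$ (they lie on a convex path $S_1(v)\cap I(v,x)$) would put a triangle $w,w',u$ inside the sphere $S_1(v)$, contradicting Lemma~\ref{K3}, equivalently forming a forbidden $K_4$ with $v$. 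So each vertex has a unique predecessor in $I(v,x)$, and iterating gives that $I(v,x)$ is a single path. Starshapedness plus ``each $I(v,x)$ a single path'' is precisely the definition of a starshaped tree, with no residual worry about an enclosed two-dimensional region. The ``fiddly bookkeeping'' you flag — identifying which adjacent fiber $u$ lands in — is handled cleanly in the paper by a two-case split (cone $F(x)$ versus panel $F(x)$), using Lemmas~\ref{lem_c2c_and_p2p} and~\ref{lem_dist_to_base} to pin down the distances to the base vertices. I recommend dropping the Gauss--Bonnet/cyclic-order plan entirely and instead extracting the uniqueness of the predecessor from the very convexity argument you already have for~(1).
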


\begin{proof} To show that $\partial^* F(x)$ is a starshaped tree, it suffices to show
    that (1) for every $v \in \partial^* F(x)$ the interval $I(v,x)$ is
    contained in $\partial^* F(x)$, and (2) $v$ has a unique neighbor in
    $I(v,x)$. By Lemma \ref{lem_starshaped}, $F(x)$ is starshaped, thus $I(v,x)$ is contained in $F(x)$.

    First let $F(x)$ be a cone. Then $x$ has distance $2$ to $z$, and $x$ and $z$ have exactly
    two common neighbors $y$ and $y'$.
    Let $u$ be a neighbor of $v$ in a fiber 1-neighboring $F(x)$. By
    Lemma \ref{lem_c2c_and_p2p}, $u$ necessarily belongs to a panel $F(y)$ or
    $F(y')$, say $u \in F(y)$. Also, we assume that $u$ is a closest to $y$ neighbor of $v$ in
    $F(y)$. Let $k := \dist_G(v,x)$.
    By the definition of a cone, we have $\dist_G(v,y) = \dist_G(v,y') = k + 1$
    and $\dist_G(v,z) = k + 2$. This implies that $\dist_G(u,y) \ge k$.
    Since $u$ belongs to the panel $F(y)$, $\dist_G(u,x) \ge \dist_G(u,y)$.
    Let $u'$ be an arbitrary neighbor of $u$ in $I(u,y)$.
    If $\dist_G(u,y) = k + 1$, from $\dist_G(u',y) = \dist_G(v,x) = k$,
    $\dist_G(u,x) \ge \dist_G(u,y) = k + 1$, and from the convexity of
    $B_k(\{x,y\})$, we conclude that $u' \sim v$. This contradicts the choice
    of $u$.
    So $\dist_G(u,y) = k$.
    Pick any neighbor $w$ of $v$ in $I(v,x)\subset F(x)$. We assert that $w\sim u$. This would
    imply that $w\in \partial^* F(x)$ and since $G$ is $K_4$-free that $v$ has a unique neighbor in $I(v,x)$.
    Indeed,  $\dist_G(y,w) = \dist_G(y,u) = k$ and $\dist_G(y,v) = k + 1$.
    From the convexity of the ball  $B_k(y)$, we obtain that $w \sim u$. 
    This establishes that $I(v,x)$ is a  path included in $\partial^* F(x)$.
    Notice also that the unique neighbor $w$ of $v$ in $I(v,x)$ must be
    adjacent to every neighbor $u'$ of $u$ in $I(u,y)$ because $u', w \in
    B_{k-1}(\{x,y\})$.
    Indeed, since $u \sim u', w$ and $\dist_G(u,x) = \dist_G(u,y) = k$, from
    the convexity of $B_{k-1}(\{x,y\})$ we conclude that $u' \sim w$.

    Now let $F(x)$ be a panel  and pick any vertex  $u \in \partial^* F(x)$.
    As in previous case, we have to show that $I(u,x) \subseteq \partial^* F(x)$ and that $u$
    has a unique neighbor in $I(u,x)$.
    Let $v$ be a neighbor of $u$ in a fiber $F(y)$ 1-neighboring $F(x)$. By Lemma \ref{lem_c2c_and_p2p},
    $F(y)$ is a cone such that $y \sim x, x'$, with $x' \sim x$ and $z\sim x, x'$.
    Assume that $v$ is a closest to $y$ neighbor of $u$ in $F(y)$.
    Let $\dist_G(v,y) := k$.
    By Lemma \ref{lem_dist_to_base}, $\dist_G(u,x) \in \{k, k +
    1\}$. If $\dist_G(u,x) = k$, then we deduce that $u$ is adjacent to the
    neighbor $w$ of $v$ in $I(v,y)$, contrary to the choice of $v$.
    Thus $\dist_G(u,x) = k + 1$.
    In that case, if $u'$ denotes a neighbor of $u$ in $I(u,x)$, then
    $\dist_G(u',x) = \dist_G(v,y) = k$ and $\dist_G(u,y) \ge \dist_G(u,x) = k +
    1$.
    From the convexity of $B_k(\{y,x\})$, we conclude that $u' \sim v$. This
    implies that, if $u$ has two neighbors $u'$ and $u''$ in $I(u,x)$, then
    $u$, $u'$, $u''$, and $v$ induce a forbidden  $K_4$. 
    Consequently, $I(u,x)$ is a path included in $\partial^* F(x)$.
\end{proof}

Total boundaries of  fibers are starshaped trees.
The following result is a corollary of Lemma
\ref{lem_total_boundary_starshaped_tree}.

\begin{corollary}
    \label{corol_mult2_error_total_boundary}
    Let $x$ be an arbitrary vertex of $\St(z)$. Then, for every pair $u,v$ of
    vertices of $\partial^* F(x)$, $\dist_G(u,v) \le \dist_{\partial^*
    F(x)}(u,v) \le 2 \cdot \dist_G(u,v)$.
\end{corollary}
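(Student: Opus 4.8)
The plan is to deduce Corollary~\ref{corol_mult2_error_total_boundary} directly from Lemma~\ref{lem_total_boundary_starshaped_tree}, which tells us that $S := \partial^* F(x)$ is a starshaped tree rooted at $x$. The lower bound $\dist_G(u,v) \le \dist_{S}(u,v)$ is immediate, since $G[S]$ is a subgraph of $G$ (taking distances in a subgraph can only increase them). So the whole content is the upper bound $\dist_{S}(u,v) \le 2\,\dist_G(u,v)$, and the natural route is to route the path in $S$ through the root $x$.

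First I would recall that, by the definition of a starshaped tree, for every $s \in S$ the interval $I_G(x,s)$ is a single path contained in $S$, and this path is a shortest $(x,s)$-path \emph{in $G$} (it lies on a geodesic of $G$), hence also a shortest $(x,s)$-path in $G[S]$. Therefore $\dist_{S}(x,s) = \dist_G(x,s)$ for every $s \in S$; in particular the root is ``metrically correct''. Concatenating the two branches $I_G(x,u)$ and $I_G(x,v)$ gives a walk from $u$ to $v$ in $G[S]$, so
$$
\dist_{S}(u,v) \le \dist_{S}(u,x) + \dist_{S}(x,v) = \dist_G(u,x) + \dist_G(x,v).
$$
The last step is to bound $\dist_G(u,x) + \dist_G(x,v)$ by $2\,\dist_G(u,v)$. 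Here I would use that $F(x)$ is starshaped with respect to $x$ (Lemma~\ref{lem_starshaped}) together with isometry of $F(x)$ in $G$ (Lemma~\ref{lem_bridged_and_starshaped_partitionning}): since $u$ and $v$ both lie in $F(x)$ and $x \in F(x)$, and a shortest $(u,v)$-path stays in $F(x)$, the vertex $x$ is ``within reach''. More precisely, I would argue $\max\{\dist_G(u,x),\dist_G(v,x)\} \le \dist_G(u,v)$: indeed, if say $\dist_G(u,x) > \dist_G(u,v)$, then moving from $v$ to $u$ along a geodesic and then noting $x \in I_G(v, \cdot)$ is not forced, so the cleaner statement to prove is that $x$ lies ``between'' $u$ and $v$ in a weak sense, or simply that both $\dist_G(u,x)$ and $\dist_G(v,x)$ are at most $\dist_G(u,v)$ because otherwise starshapedness of $F(x)$ at $x$ forces one of $u,v$ to belong to the interval $I_G(x,\text{other})$ and a short computation with the triangle inequality gives a contradiction. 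Granting $\dist_G(u,x), \dist_G(v,x) \le \dist_G(u,v)$, we get $\dist_G(u,x) + \dist_G(x,v) \le 2\,\dist_G(u,v)$, completing the proof.

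The main obstacle I anticipate is precisely establishing $\dist_G(u,x) \le \dist_G(u,v)$ (and symmetrically for $v$) — i.e.\ that no vertex of $\partial^* F(x)$ is strictly farther from $x$ than from another boundary vertex. This should follow from the metric structure of fibers: $u,v \in \partial^*F(x)$ each have a neighbor in a $1$-neighboring fiber, and the analysis inside the proof of Lemma~\ref{lem_total_boundary_starshaped_tree} shows that boundary vertices of a cone at distance $k$ from $x$ have neighbors in a panel at distance $k$ from the panel's base (and symmetrically), which pins down distances tightly; if this does not immediately yield the bound, one falls back on the general fact that in any starshaped tree $S$ rooted at $x$, $\dist_S(u,v) = \dist_G(u,x) + \dist_G(x,v) - 2\,\dist_G(x, m)$ where $m$ is the meeting point of the two branches, and then bounds $\dist_G(u,x) + \dist_G(x,v) \le \dist_G(u,v) + 2\dist_G(x,m)$ using that $m \in I_G(u,v)$ (which does hold because the branches overlap exactly on $I_G(x,u)\cap I_G(x,v)$, and this intersection lies on a geodesic between $u$ and $v$ inside the isometric starshaped set). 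Either way the key inputs are Lemmas~\ref{lem_starshaped}, \ref{lem_bridged_and_starshaped_partitionning}, and \ref{lem_total_boundary_starshaped_tree}, and the argument is short.
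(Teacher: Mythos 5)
There is a genuine gap. Your main route reduces the problem to showing $\dist_G(u,x), \dist_G(v,x) \le \dist_G(u,v)$, but this is false for boundary vertices in general: one can have $u$ and $v$ adjacent in $G$ (or at small distance) while both are far from $x$ (e.g.\ two neighboring boundary vertices at depth $k$ in $\partial^* F(x)$). Routing through the root $x$ then gives a bound of $2k$, while $2\dist_G(u,v)$ is only $2$, so the inequality you are trying to prove cannot hold as stated. Your fallback replaces $x$ by the nearest common ancestor $m$ of the two branches and claims $m \in I_G(u,v)$, but this is also false in general, and indeed if it were true it would give the \emph{exact} equality $\dist_{\partial^* F(x)}(u,v) = \dist_G(u,v)$, contradicting the need for a factor~$2$ (starshaped trees are not isometric, as the paper stresses).

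The missing ingredient is the quasi-median argument. Take a quasi-median $u'v'x'$ of the triplet $u,v,x$. The vertex $x'$ lies in $I(u,x) \cap I(v,x)$, hence is the nearest common ancestor of $u,v$ in the starshaped tree $\partial^* F(x)$, while $u'$ and $v'$ lie on the branches toward $u$ and $v$. The quasi-median gives $\dist_G(u,v) = \dist_G(u,u') + \dist_G(u',v') + \dist_G(v',v)$, and the tree-distance decomposes as $\dist_{\partial^* F(x)}(u,v) = \dist_G(u,u') + \dist_G(u',x') + \dist_G(x',v') + \dist_G(v',v)$. Now Proposition~\ref{metric_triangles} forces the metric triangle $u'v'x'$ to be equilateral, so $\dist_G(u',x') + \dist_G(x',v') = 2\dist_G(u',v')$, and therefore $\dist_{\partial^* F(x)}(u,v) = \dist_G(u,u') + 2\dist_G(u',v') + \dist_G(v',v) \le 2\dist_G(u,v)$. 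The factor $2$ comes precisely from this equilateral-triangle detour, not from routing through the root; that is the structural fact your argument does not invoke.
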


\begin{proof}
    Let $(u',v',x')$ be a quasi-median of $(u,v,x)$.
    Then $\dist_G(u,v) = \dist_G(u,u') + \dist_G(u',v') + \dist_G(v',v)$.
    Since ${\partial^*
    F(x)}$ is a starshaped tree by Lemma
    \ref{lem_total_boundary_starshaped_tree}, $\dist_{\partial^*
    F(x)}(u,u') = \dist_G(u,u')$, $\dist_{\partial^*
    F(x)}(v,v') = \dist_G(v,v')$, $\dist_{\partial^*
    F(x)}(u',x') = \dist_G(u',x')$,
    and $\dist_{\partial^*
    F(x)}(x',v') = \dist_G(x',v')$.
    Moreover, $x' \in I(u,x) \cap I(v,x)$ implies that $x'$ is the nearest
    common ancestor of $u$ and $v$ in ${\partial^*
    F(x)}$.
    Since metric triangles are equilateral in bridged graphs, $\dist_{\partial^*
    F(x)}(u',x') +
    \dist_{\partial^*
    F(x)}(x',v') = 2 \dist_G(u',v')$, yielding the required inequality.
\end{proof}

\subsection{Projections on total boundaries}
We now describe the structure of metric projections of the vertices on the
total boundaries of fibers. Then in Lemma
\ref{lem_exits} we prove that vertices in panels have a constant number of
``exits'' on their total boundaries, even if the panel itself may have an arbitrary
number of 1-neighboring cones.

\begin{lemma}
    \label{lem_proj_increasing_tree}
    Let $F(x)$ be a fiber and $u \in V \setminus F(x)$. Then the metric
    projection $\Pi := \mproj{u}{F(x)} = \mproj{u}{\partial^* F(x)}$ is an
    induced tree of $G$.
\end{lemma}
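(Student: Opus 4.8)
The plan is to first recall the structural facts already established. By Lemma~\ref{lem_total_boundary_starshaped_tree}, $\partial^* F(x)$ is a starshaped tree rooted at $x$, and by Lemma~\ref{lem_starshaped} each fiber $F(x)$ is starshaped with respect to $x$. So I would start by showing that $\mproj{u}{F(x)}=\mproj{u}{\partial^* F(x)}$, i.e., that every vertex of $F(x)$ closest to $u$ actually lies on the total boundary. For this, let $p\in\mproj{u}{F(x)}$ and take a neighbor $p'$ of $p$ on a shortest $(u,p)$-path in $G$; since $p$ is a closest point of $F(x)$ to $u$, necessarily $p'\notin F(x)$, hence $p$ has a neighbor outside its own fiber and thus $p\in\partial^* F(x)$. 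The reverse inclusion $\mproj{u}{\partial^* F(x)}\supseteq\mproj{u}{F(x)}$ is immediate since $\partial^* F(x)\subseteq F(x)$; combined with the fact that $\mproj{u}{\partial^* F(x)}$ consists of vertices of $\partial^*F(x)$ at distance $d_G(u,F(x))$ from $u$, the two projection sets coincide.

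Next I would prove that $\Pi$ induces a connected subgraph. The natural tool is the quasi-median: for any $p\in\Pi$, consider a quasi-median $u'p'x'$ of the triplet $u,p,x$. Since $F(x)$ is starshaped and $p,x\in F(x)$, the whole interval $I(p,x)$, and in particular $p'$ and $x'$, lie in $F(x)$; moreover $x'=u'$ need not hold, but $x'\in I(u,x)$, and one checks using the starshaped-tree structure of $\partial^* F(x)$ (Lemma~\ref{lem_total_boundary_starshaped_tree}) that the ``feet'' of the projection all hang off a common ancestor in the tree. Concretely, I expect that if $p,q\in\Pi$ then their nearest common ancestor $r$ in the starshaped tree $\partial^* F(x)$ also belongs to $\Pi$, and that the increasing paths from $r$ to $p$ and from $r$ to $q$ lie in $\Pi$; this is exactly the statement that $\Pi$ is a subtree of the starshaped tree $\partial^* F(x)$. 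To carry this out I would use convexity of balls: $p,q\in\Pi$ means $p,q\in S_d(u)$ with $d=d_G(u,F(x))$, and any vertex on a shortest $(p,q)$-path in $\partial^* F(x)$ passing through $r$ should be shown to have distance exactly $d$ to $u$ — distance $\ge d$ because $d$ is the minimum over $F(x)$, and distance $\le d$ by a convexity-of-balls argument applied inside the burned-lozenge / tripod structure of $I(p,q)\cap\partial^*F(x)$, invoking Lemma~\ref{lem_2_increasing_paths} and Lemma~\ref{lem_interval_shape}.

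Finally, once $\Pi$ is known to be a connected subgraph of the tree $\partial^* F(x)$, it is automatically an induced tree of $G$: a subgraph of a tree is a forest, a connected forest is a tree, and any connected subset of vertices of a tree induces that tree (trees have no chords). So the remaining work is entirely in the connectivity step.

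The main obstacle I anticipate is the connectivity argument — specifically, ruling out that $\Pi$ ``jumps'' between two different branches of the starshaped tree without the nearest common ancestor being in $\Pi$. Because the total boundary is only a starshaped \emph{tree} and not an isometric subgraph (distances in it can double, as Corollary~\ref{corol_mult2_error_total_boundary} warns), I cannot directly say $d_G(u,r)\le\max(d_G(u,p),d_G(u,q))$ from tree-distance considerations alone. The right move is to work in $G$ itself: take a quasi-median of $u,p,q$ (or of $u,p,x$ and $u,q,x$), land inside the deltoid structure, and use that $r$ — the branch point, which is one of the ``extremal convex corners'' $v_1,v_2$ or the vertex $z_0$ from Lemma~\ref{lem_2_increasing_paths} applied to $I(p,q)$ — sits on shortest paths, together with convexity of the ball $B_d(u)$, to force $d_G(u,r)=d$. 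I would also need the observation that $x'$ (the foot of the quasi-median on $I(u,x)$) is itself in $\Pi$, which pins the common ancestor. This is where I expect most of the casework, essentially mirroring the case analysis (cone vs.\ panel) of Lemma~\ref{lem_total_boundary_starshaped_tree}.
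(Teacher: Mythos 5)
Your outline correctly identifies where the difficulty lies and correctly assembles the peripheral ingredients (starshapedness of $\partial^* F(x)$, convexity of $B_d(u)$, quasi-medians), but it has a genuine gap at the crux of the connectivity argument — and the gap is not just that you've left details for later; your proposed resolution doesn't quite work.

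You write that the right move is to use ``that $r$ — the branch point — sits on shortest paths, together with convexity of the ball $B_d(u)$, to force $d_G(u,r)=d$.'' But you yourself flagged one sentence earlier that the total boundary is not isometric, and that is precisely why it is \emph{not} true a priori that the nearest common ancestor $r$ of $p$ and $q$ in $T:=\partial^*F(x)$ lies on a shortest $(p,q)$-path in $G$. So the fact you propose to ``use'' is exactly the fact that still needs to be proved, and it cannot be extracted from tree-distance or ball-convexity considerations alone. The paper's proof instead takes the quasi-median $p'q't'$ of the triplet $(p,q,t)$ (with $t$ the NCA, not $u$), first pins $t'=t$ using the starshaped-tree structure, then shows the segments $I(p,p')$, $I(p',q')$, $I(q',q)$ all lie in $\Pi$ via $B_k(u)$-convexity, and finally — this is the step your sketch is missing — proves $p'=q'=t$ by assuming the contrary, taking an edge $ab$ in the deltoid $\Delta(p',q',t)$, producing two auxiliary vertices $c$ (via (TC) towards $x$) and $d$ (via (TC) towards $u$, landing in a neighboring fiber), and deriving a forbidden $K_4$ in a cone-vs-panel case analysis using Lemma~\ref{lem_dist_to_base}. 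Without some argument of this kind that collapses the quasi-median triangle, the claim that $r\in\Pi$ remains unproved. Your paragraph showing $\mproj{u}{F(x)}=\mproj{u}{\partial^*F(x)}$ and your remark that a connected vertex subset of a starshaped tree induces a tree are both fine, and the case where $p,q$ lie on the same branch is indeed handled as you expect by ball convexity, but the cross-branch case needs the $K_4$-forbidding mechanism, which your proposal neither names nor replaces.
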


\begin{proof}
    The metric projection of $u$ on $F(x)$ necessarily belongs to a boundary,
    that is a starshaped tree by Lemma
    \ref{lem_total_boundary_starshaped_tree}. As a consequence, $\Pi$ is a
    \emph{starshaped forest}, i.e., a set of starshaped trees.
    We assert that in fact $\Pi$ is a connected subgraph of $T := \partial^* F(x)$.
    To prove this, we will prove the stronger property that $\Pi$ is an induced 
    tree of $G$.
    Assume by way of contradiction that two vertices $v,w$ of $\Pi$ are not
    connected in $T$ by a path.
    First suppose that $w$ is an ancestor of $v$ in $T$.
    Then every vertex $z$ on the branch of $v$ between $v$ and $w$ belongs to a
    shortest $(v,w)$-path (because $T$ is starshaped) and is at distance at
    most $k:=\dist_G(u,v)= \dist_G(u,w)$ from $u$ (because the ball $B_k(u)$ is convex).
    But then we conclude that the whole shortest $(v,w)$-path belongs to $\Pi$,
    contrary to the choice of $v,w$. Therefore, further we can suppose that $v$
    and $w$ belong to distinct branches of $T$.

    Let $t$ be the nearest common ancestor of $v$ and $w$ in $T$. Let $v'w't'$
    be a quasi-median of $v$, $w$, and $t$. Since $T$ is a starshaped tree, $t'
    \in I(v,t) \cap I(w,t)$, and $t$ is the nearest common ancestor of $v$ and
    $w$, we conclude that $t = t'$.
    We assert that the set of all vertices of $T$ between $v$ and $v'$, between
    $v'$ and $w'$, and between $w'$ and $w$ belongs to $\Pi$.
    Indeed, such vertices belong to a shortest $(v,w)$-path. The ball $B_k(u)$
    is convex and $v, w \in B_k(u)$. Since all such vertices also belong to $T$,
    we conclude that they all have distance $k$ from $u$. We now show that $v' = w' = t$.
    Assume by way of contradiction that $v' \ne w'$ and consider an edge $ab$
    on the path between $v'$ and $w'$ in $\Delta(v',w',t)$.
    According to Proposition \ref{metric_triangles}, $\dist_G(a,x) =
    \dist_G(b,x) =: \ell$.
    By the triangle condition applied to $a$, $b$ and $x$, there exists a
    vertex $c \sim a,b$ at distance $\ell-1$ from $x$.
    Since $a \sim b$ and $\dist_G(a,u) = \dist_G(b,u) = k$, there must exist a
    vertex $d \sim a,b$ at distance $k - 1$ from $u$ and this vertex belongs
    to a fiber $F(y)$ 1-neighboring $F(x)$. By Lemma \ref{lem_c2c_and_p2p}, one
    of those two fibers has to be a panel and the other must be a cone.

    First, let $F(x)$ be a panel and $F(y)$ be a cone.
    By Lemma \ref{lem_dist_to_base}, two subcases have to be considered :
    $\dist_G(d,y) = \ell$  and $\dist_G(d,y) = \ell - 1$. If $\dist_G(d,y) =
    \ell$, then $\dist_G(d,y) = \dist_G(y,c) = \ell$ and
    $\dist_G(b,y) = \ell + 1$. The convexity of $B_{\ell}(y)$ implies that $c
    \sim d$, and therefore $\{a,b,c,d\}$ induce a forbidden $K_4$. If
    $\dist_G(d,y) = \ell - 1$, consider the vertex $x' \sim y, x$ in $I(z,y)$.
    Then $\dist_G(d,x') = \dist_G(c,x') = \ell$, but $\dist_G(b,x') = \ell +
    1$. From the convexity of $B_{\ell}(x')$, we deduce that $c \sim d$.
    So $\{a,b,c,d\}$ induces a forbidden $K_4$.

    Now, let $F(x)$ be a cone and $F(y)$ be a panel. By Lemma
    \ref{lem_dist_to_base}, we have to consider the subcases $\dist_G(d,y) =
    \ell$ and $\dist_G(d,y) = \ell + 1$. If $\dist_G(d,y) = \ell$, then
    $\dist_G(d,y) = \dist_G(c,y) = \ell$ and $\dist_G(b,y) = \ell + 1$ lead to
    $c \sim d$ by the convexity of $B_{\ell}(y)$. Consequently, $\{a,b,c,d\}$
    induces a $K_4$. Finally, if $\dist_G(d,y) = \ell + 1$, we consider a
    vertex $e \sim d$ in $F(y)$ on a shortest $(d,y)$-path, and we consider the
    vertex $y' \sim x, y$ in $I(x,z)$. Then $\dist_G(b,y') = \dist_G(e,y') =
    \ell + 1$ and $\dist_G(d,y') = \ell + 2$. From the convexity of
    $B_{\ell+1}(y')$, it follows that $e \sim b$. With similar arguments, we
    show that $a \sim e$.
    Consequently, $\{a,b,d,e\}$ induces a forbidden $K_4$. Summarizing, we
    showed that in all cases the assumption $v'\ne w'$ leads to a
    contradiction. Therefore  $v' = w' = t$ and $t \in \Pi$. Since $t$ is an
    ancestor of $v$ and $w$, by what has been shown above, $v$ and $w$ can be
    connected in $\Pi$ to $t$ by shortest paths.  This leads to a contradiction
    with the assumption that $v$ and $w$ are not connected in $\Pi$.
\end{proof}

\begin{lemma}
    \label{lem_entrance}
    Let $F(x)$ be a fiber and $u \in V \setminus F(x)$.
    There exists a unique vertex $u'\in \Pi := \mproj{u}{F(x)}$ that is closest
    to $x$.
    Furthermore, $\dist_{G(\Pi)}(u',v) \le \dist_G(u,u') = \dist_G(u,v)$ for
    all $v \in \Pi$.
\end{lemma}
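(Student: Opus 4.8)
The plan is to extract both assertions from the description of $\Pi := \mproj{u}{F(x)}$ given by Lemma~\ref{lem_proj_increasing_tree}, combined with one quasi-median computation. First I would record the structure: by Lemma~\ref{lem_proj_increasing_tree}, $\Pi = \mproj{u}{F(x)} = \mproj{u}{\partial^* F(x)}$ is an induced tree of $G$ that is at the same time a connected subgraph --- hence a subtree --- of the starshaped tree $T := \partial^* F(x)$ rooted at $x$. Since the branches $I(x,s)$ of $T$ are $G$-geodesics, $\dist_G(x,\cdot)$ agrees with the tree distance $\dist_T(x,\cdot)$ on $V(T)$; and since every edge of $T$ is a $G$-edge, $T[\Pi]$ is a spanning subtree of the tree $G[\Pi]$, so $G[\Pi]=T[\Pi]$. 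A connected subtree of a rooted tree has a unique vertex closest to the root, and that vertex is an ancestor of all the others: if two vertices of $\Pi$ were both closest to $x$, their nearest common ancestor in $T$ would again lie in $\Pi$ (connectedness of $\Pi$ in $T$) and could not be farther from $x$, forcing the two vertices to coincide. This yields the existence and uniqueness of $u'$ and shows $u'\in I(x,v)$ for every $v\in\Pi$. Consequently the $T$-path from $u'$ to $v$ lies in $\Pi$, and $\dist_{G[\Pi]}(u',v)=\dist_G(u',v)=\dist_G(x,v)-\dist_G(x,u')$.

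Next I would reduce everything to a single inequality. Put $k:=\dist_G(u,F(x))$; every vertex of $\Pi$ is at distance $k$ from $u$, so $\dist_G(u,u')=\dist_G(u,v)=k$. Also $\dist_G(u,x)\ge k$ because $x\in F(x)$, and the triangle inequality gives $\dist_G(x,u')\ge\dist_G(u,x)-k$. Hence $\dist_{G[\Pi]}(u',v)=\dist_G(x,v)-\dist_G(x,u')\le\dist_G(x,v)-(\dist_G(u,x)-k)$, and both claims of the lemma follow as soon as I prove $\dist_G(x,v)\le\dist_G(u,x)$ for every $v\in\Pi$.

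For that inequality I would take a quasi-median $pqr$ of the triplet $(u,v,x)$: there are geodesics through $p,q$ from $u$ to $v$, through $q,r$ from $v$ to $x$, and through $p,r$ from $u$ to $x$, and $pqr$ is an equilateral metric triangle since $G$ is bridged (Proposition~\ref{metric_triangles}). As $q$ lies on a $(v,x)$-geodesic, $q\in I(x,v)$, and since $F(x)$ is starshaped with respect to $x$ (Lemma~\ref{lem_starshaped}) and $v\in F(x)$, in fact $q\in F(x)$; therefore $\dist_G(u,q)\ge\dist_G(u,F(x))=\dist_G(u,v)$, while $q$ lies between $u$ and $v$ on a geodesic, so $\dist_G(q,v)=0$ and $q=v$. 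Thus the quasi-median is $p\,v\,r$ with $\dist_G(p,v)=\dist_G(v,r)=\dist_G(p,r)$, and reading the geodesic from $v$ to $x$ through $r$ and the geodesic from $u$ to $x$ through $p$ and $r$, together with equilaterality, gives $\dist_G(x,v)=\dist_G(v,r)+\dist_G(r,x)=\dist_G(p,r)+\dist_G(r,x)\le\dist_G(u,p)+\dist_G(p,r)+\dist_G(r,x)=\dist_G(u,x)$, as wanted.

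The only step with real content is this last inequality, where bridgedness (equilaterality of metric triangles) and the starshapedness of the fiber $F(x)$ genuinely enter; everything else is bookkeeping on the starshaped tree $\partial^* F(x)$. The one spot deserving care is the identity $G[\Pi]=T[\Pi]$, which is what lets me measure distances in $G[\Pi]$ as lengths along branches of $T$: it rests on Lemma~\ref{lem_proj_increasing_tree} asserting simultaneously that $\Pi$ is an induced tree of $G$ and that it induces a connected subgraph of $\partial^* F(x)$.
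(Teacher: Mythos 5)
Your proof is correct, and while the uniqueness argument mirrors the paper's (both observe that $\Pi$ is a connected rooted subtree of the starshaped tree $\partial^* F(x)$, so the nearest-common-ancestor argument gives a unique vertex closest to $x$), your handling of the distance inequality takes a genuinely cleaner route than the paper does. The paper argues by contradiction: assuming $\dist_G(u',v)\ge k+1$, it repeatedly applies the triangle condition to $u$ and consecutive edges of the increasing path $I(u',v)$, then claims (without carrying it out) that the resulting vertices are distinct, form a path, and eventually produce a non-equilateral metric triangle. You replace this inductive chase with one quasi-median computation: take $pqr$ for $(u,v,x)$, use starshapedness of $F(x)$ plus $v\in\Pi=\mproj{u}{F(x)}$ to force $q=v$, then read off $\dist_G(x,v)\le\dist_G(u,x)$ from equilaterality, which combined with the triangle inequality $\dist_G(u,x)\le k+\dist_G(u',x)$ gives exactly the bound. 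Both arguments ultimately rest on Proposition~\ref{metric_triangles} (equilaterality of metric triangles) together with starshapedness, but yours is self-contained and fills in what the paper only sketches. The one preparatory observation you rely on --- that $G[\Pi]$ coincides with the subtree $T[\Pi]$, so that $\dist_{G[\Pi]}(u',v)=\dist_G(u',v)=\dist_G(x,v)-\dist_G(x,u')$ --- is justified correctly by appealing to Lemma~\ref{lem_proj_increasing_tree} (that $\Pi$ is simultaneously an induced tree of $G$ and a connected subgraph of $T$, whose edges are $G$-edges lying on geodesic branches).
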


\begin{proof}
    The uniqueness of $u'$ follows from the fact that $\Pi$ is a rooted
    starshaped subtree of $T := \partial^* F(x)$, which itself is a starshaped
    tree rooted at $x$. Indeed, every pair $(a,b)$ of vertices  of $\Pi$ admits
    a nearest common
    ancestor in $\Pi$ that coincides with the nearest common ancestor in $T$.
    This ancestor has to be closer to $x$ than $a$ and $b$ (or at equal
    distance if $a = x$ or $b = x$).

    Pick $v \in \Pi$. The equality $k := \dist_G(u,u') =
    \dist_G(u,v)$ holds since $\Pi$ is the metric projection of $u$
    on $F(x)$. Assume by way of contradiction that $\dist_G(u',v) \ge k + 1$. Then
    $I(v,u') =: P$ is an increasing path of length at least $k + 1$ in a
    starshaped tree.
    By  triangle condition applied to $u$ and to every pair of neighboring
    vertices of $P$, we derive at least $k$ vertices. We then can show that
    each of these (at least $k$) vertices has to be distinct from every other
    (otherwise, $P$ would contain a shortcut).
    We also can prove that those vertices create a path and, by induction on
    the length of this new shortest path, deduce that $(u,v,t)$ forms a
    non-equilateral metric triangle, which is impossible.
\end{proof}

\subsection{The distance lemma}
Lemma \ref{lem_entrance} establishes that every branch of
the tree $\Pi$ has depth smaller or equal to $\dist_G(u,u')$.
The vertex $u'$ defined in Lemma~\ref{lem_entrance} will be called the \emph{entrance} of vertex $u$ in the fiber $F(x)$.

\begin{lemma}
    \label{lem_exits}
    Let $u$ be any vertex of $G$ and let $T$ be a starshaped tree rooted at $z
    \in V$.
    Let $u_1$ and $u_2$ be the two extremal vertices with respect to $z$ in the
    two increasing paths of $I(u,z) \cap T$ (there are at most two of them by
    Lemma \ref{lem_2_increasing_paths}). Then, for all $v \in T$, the following
    inequality holds
    $$
    \min \{
        \dist_G(u,u_1) + \dist_T(u_1,v),
        \dist_G(u,u_2) + \dist_T(u_2,v)
    \} \le 2 \cdot \dist_G(u,v).
    $$
\end{lemma}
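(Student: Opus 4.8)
The plan is to reduce the inequality to the structure of $T':= I(u,z)\cap T$ described in Lemma~\ref{lem_2_increasing_paths}. First I would handle the degenerate case: if $I(u,z)$ is a single shortest path (so $T'$ is a single increasing path), then $u$ itself lies on $T$ and the two extremal vertices are not defined in the usual way; but in that case one can take $u_1=u_2$ to be the vertex of $T'$ closest to $u$, and the required bound follows because $\dist_T(u_i,v)\le \dist_T(u_i,u)+\dist_T(u,v)$ and everything collapses. So assume $I(u,z)$ contains a non-trivial block, and let $z_0,v_1,v_2,P_0,P_1,P_2$ be as in Lemma~\ref{lem_2_increasing_paths}, with $u_1,u_2$ the extremal convex corners; set $k_i:=\dist_G(u,u_i)$.

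Next I would fix an arbitrary $v\in T$ and run a quasi-median argument in the tree $T$. Let $t$ be the nearest common ancestor of $v$ and $z_0$ in $T$ (equivalently of $v$ and either $u_i$); since $T$ is a starshaped tree, $\dist_T(v,u_i)=\dist_T(v,t)+\dist_T(t,u_i)$ for each $i$, and $\dist_T$ restricted to each increasing branch equals $\dist_G$ (Lemma~\ref{lem_entrance} / the starshaped-tree property). The key geometric input is then: one of $u_1,u_2$ is ``on the side of'' $v$, more precisely there is an $i\in\{1,2\}$ with $\dist_G(u,u_i)+\dist_G(u_i,t)\le \dist_G(u,t)+ (\text{small slack})$. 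This should follow from Lemma~\ref{lem_2_increasing_paths}(ii): $I(z,v_i)=P_0\cup P_i$, so $v_i\in I(z,u_i)$-type relations let us compare $\dist_G(u,t)$ with the detour through the appropriate extremal corner. Concretely, since $t$ lies on $P_0$ or on one of $P_1,P_2$, and since $I(u,z)$ is a burned lozenge by Lemma~\ref{lem_interval_shape}, the two extremal corners $u_1,u_2$ bracket all of $T'$, so $t\in I(z,u_1)$ or $t\in I(z,u_2)$; pick that $i$. Then $\dist_G(u_i,t)=\dist_G(u_i,z)-\dist_G(t,z)$ and $\dist_G(u,u_i)=\dist_G(u,z)-\dist_G(u_i,z)$, so $\dist_G(u,u_i)+\dist_G(u_i,t)=\dist_G(u,z)-\dist_G(t,z)$. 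On the other hand $\dist_G(u,t)\ge \dist_G(u,z)-\dist_G(t,z)$ need not be an equality — but it is at least $\dist_G(u,v)-\dist_G(v,t)$ trivially, and also $\dist_G(u,t)\ge\dist_G(t,z_0)-\dots$; the point is to get $\dist_G(u,u_i)+\dist_G(u_i,t)\le 2\dist_G(u,t)$, perhaps using that $t$ lies between $z$ and $u_i$ so $\dist_G(u_i,t)\le\dist_G(u_i,u)+\dist_G(u,t)$... which would give exactly $\dist_G(u,u_i)+\dist_G(u_i,t)\le 2\dist_G(u,u_i)+\dots$, not quite. So the real work is bounding $\dist_G(u,u_i)$ itself: by Lemma~\ref{lem_entrance} applied with the fiber replaced by an appropriate set, $\dist_G(u,u_i)\le \dist_G(u,t)$ when $t$ is a descendant of $u_i$ — but $u_i$ is typically a descendant of $t$, so instead $\dist_G(u,u_i)\le\dist_G(u,t)+\dist_T(t,u_i)$ and $\dist_T(u_i,v)\le\dist_T(u_i,t)+\dist_T(t,v)$, giving $\dist_G(u,u_i)+\dist_T(u_i,v)\le\dist_G(u,t)+2\dist_T(t,u_i)+\dist_T(t,v)$. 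To close this we need $2\dist_T(t,u_i)+\dist_T(t,v)\le 2\dist_G(u,v)-\dist_G(u,t)+ \text{(something)}$, which will come from $\dist_G(u,v)\ge\dist_G(u,t)$ (as $t\in I(u,\cdot)$-ish? — no) — this is where I would need to be careful and instead choose $t$ as the nearest common ancestor of $u_i$-branch and $v$ inside $T$ and use $\dist_G(u,t)\le\dist_G(u,u_i)=k_i$, which holds because $t$ is an ancestor of $u_i$ and $u_i$ is the metric projection extremal point so all of $I(z,u_i)$ is within distance $k_i$ of $u$ — wait, that is exactly Lemma~\ref{lem_entrance}'s conclusion that every vertex of the branch has depth $\le\dist_G(u,u')$, applied with $u'=u_i$: hence $\dist_G(u,t)\le k_i$ for $t$ on the $u_i$-branch. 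Good — then $\dist_G(u,u_i)+\dist_T(u_i,v)\le k_i+\dist_T(u_i,t)+\dist_T(t,v)$ and since $\dist_T(u_i,t)\le k_i$ and $\dist_T(t,v)=\dist_G(t,v)$... and $\dist_G(u,v)\ge\dist_G(u_i,v)-\dist_G(u,u_i)\ge(\dist_T(u_i,t)+\dist_T(t,v))-k_i$ using $\dist_G\le\dist_T$... assembling, $2\dist_G(u,v)\ge 2\dist_T(u_i,t)+2\dist_T(t,v)-2k_i$ — not directly enough, so the final bookkeeping needs $\dist_G(u,v)\ge k_i$ or a triangle-inequality split at $t$, which I would get from $v$ being in $T$ and $t$ the common ancestor.

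The step I expect to be the main obstacle is precisely this last bookkeeping: correctly choosing which extremal corner $u_i$ to use as a function of where $v$ sits in $T$ relative to the branches containing $v_1,v_2$, and then combining $\dist_G(u,t)\le k_i$ (from Lemma~\ref{lem_entrance}), $\dist_T(u_i,t)\le k_i$, and $\dist_G(u,v)\ge$ (a triangle-inequality lower bound through $t$) to land exactly at the factor $2$. I would organize the final case analysis by the position of the nearest common ancestor $t$ of $v$ and $T'$ — either $t\in P_0$ (the ``trunk'', so $t\in I(z,v_1)\cap I(z,v_2)$ and either corner works), or $t\in P_1\setminus P_0$ (use $u_1$), or $t\in P_2\setminus P_0$ (use $u_2$), or $t$ lies strictly below all of $T'$ on $v$'s own branch outside $T'$ (then $t$ is a descendant of some $u_i$ and $\dist_G(u,u_i)+\dist_T(u_i,v)=\dist_G(u,u_i)+\dist_T(u_i,t)+\dist_T(t,v)$ with $\dist_T(u_i,t)\le\dist_G(u,v)$ and $\dist_G(u,u_i)\le\dist_G(u,v)$, immediately giving the bound). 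The first three cases are symmetric and reduce to the trunk case, which is the genuinely delicate one.
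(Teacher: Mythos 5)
The central gap is that you never run the quasi-median argument in $G$ that actually produces the factor $2$. The paper's proof takes a quasi-median $u'v'z'$ of the triplet $(u,v,z)$ \emph{in $G$}, locates $z'$ on the trunk of $T'$ between $z$ and the nearest common ancestor $x$ of $u_1$ and $v$, deduces that $u_1\in I(u,z')$ so that $\dist_G(u,u')+\dist_G(u',z')=\dist_G(u,u_1)+\dist_T(u_1,z')$, and then uses the fact that the quasi-median is an equilateral metric triangle (Proposition~\ref{metric_triangles}) to absorb the detour $\dist_T(z',v')=\dist_G(u',v')$ into one extra copy of $\dist_G(u,v)$. You mention ``a quasi-median argument in the tree $T$,'' but metric triangles in a tree are trivial, and the entire source of the constant $2$ is the equilateral triangle $u'v'z'$ in $G$; that ingredient is absent from your plan, and your attempts to replace it by triangle inequalities in $T$ plus ad hoc bounds never close.

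Several of the intermediate steps are also incorrect as stated. In your degenerate case you assert that if $I(u,z)$ is a single path then ``$u$ itself lies on $T$'' and you then use $\dist_T(u_i,u)$ and $\dist_T(u,v)$; but $u\notin T$ in general, so these quantities are undefined. Your key inequality $\dist_T(u_i,t)\le k_i$ (for $t$ on the $u_i$-branch) is not justified and is generally false: in the burned lozenge $I(u,z)$ the boundary path through $t$ and $u_i$ is a geodesic, so $\dist_G(u_i,t)=\dist_G(u,t)-\dist_G(u,u_i)$, and nothing bounds this by $k_i=\dist_G(u,u_i)$. You attribute $\dist_G(u,t)\le k_i$ to Lemma~\ref{lem_entrance}, but that lemma concerns the metric projection $\Pi=\mproj{u}{F(x)}$, not $I(u,z)\cap T$; the two sets are unrelated here. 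In the ``last case'' you claim the combination $\dist_T(u_i,t)\le\dist_G(u,v)$ and $\dist_G(u,u_i)\le\dist_G(u,v)$ (neither of which is established, and the second fails when $v$ is near $u$) immediately gives the bound, but even granting both the resulting sum is $\dist_G(u,u_i)+\dist_T(u_i,t)+\dist_T(t,v)$, leaving $\dist_T(t,v)$ uncontrolled. Finally you explicitly flag the trunk case as ``the genuinely delicate one'' without resolving it, so the plan does not yield a proof. The fix is to abandon the pure tree bookkeeping and instead introduce the quasi-median $u'v'z'$ of $(u,v,z)$, place $v'$ on $I(x,v)$ and $z'$ on $I(z,x)$ in $T$, verify $u_1\in I(u,z')$, and use strong equilaterality to obtain $\dist_G(u,u_1)+\dist_T(u_1,v)=\dist_G(u,u')+\dist_G(u',z')+\dist_T(z',v)\le 2\dist_G(u,v)$.
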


\begin{proof}
    Assume $\min_{i \in \{1,2\}} \{\dist_G(u,u_i) +
    \dist_T(u_i,v) \}$ is reached  for $i = 1$.
    Let $x \in T$ be the nearest common ancestor of $u_1$ and $v$. By Lemma \ref{lem_2_increasing_paths},
    we know that $v \notin I(u,z)$, unless $v = x$. We can assume that $v \ne x$, otherwise $\dist_G(u,v) =
    \dist_G(u,u_1) + \dist_T(u_1, v)$ would be shown already.
    Consider a quasi-median $u'v'z'$ of the triplet $u,v,z$ (see Fig.
    \ref{fig_2_representants_bordure}, left).
    We can make the following two remarks:

    (1) Since $T$ is a starshaped tree and $I(z,v)$ is one of its branches,
    $v'$ necessarily belongs to $I(z,v)$. If $v' \in I(z,x)$, then
    $v' \in I(u_1,z) \cap I(v,z)$ implies that $v' =
    x$ and that $\dist_G(u,u_1) + \dist_T(u_1,v') + \dist_T(v',v) =
    \dist_G(u,v)$. Indeed, if $v' = x$, then $v' \in I(u,z)$ (because $x \in I(u,z)$). So
    $\dist_G(u,u_1) + \dist_G(u_1,v') = \dist_G(u,v')$. Since $T$ is
    starshaped, $\dist_G(u_1,v') = \dist_T(u_1,v')$ and $\dist_T(v',v) =
    \dist_G(v',v)$. Finally, since $v'$ belongs to a quasi-median of $u,v,z$, $v'$ lies
    on a shortest $(u,v)$-path. So $\dist_G(u,u_1) +
    \dist_T(u_1,v') + \dist_T(v',v) = \dist_G(u,v') + \dist_G(v',v) =
    \dist_G(u,v)$. Therefore, $v' \in I(x,v)$.

    (2) Since $T$ is starshaped and $z' \in I(v,z)$, $z'$ belongs to the
    branch $I(v,z)$ of $T$. Since $z' \in I(u,z)$ $I(x,v) \cap
    I(u,z) = \{x\}$, we conclude that $z'$ is between $z$ and $x$.
    Since $z' \in I(u',z) \cap I(u_1,z)$, and $u_1, u' \in I(u,z)$, we assert
    that $\dist_G(u,u') + \dist_G(u',z') = \dist_G(u,u_1) + \dist_T(u_1,z')$.
    Indeed, $u' \in I(u,z')$ because it belongs to a quasi-median of
    $u,v,z$ ; $u_1 \in I(u,z')$ by definition, so the distance between $u$
    and $z'$ passing through $u_1$ and passing through $u'$ are equal:
    $\dist_G(u,u') + \dist_G(u',z) = \dist_G(u,u_1) + \dist_G(u_1,z')$.
    Finally, since $T$ is starshaped (and $z' \in T$), $\dist_G(u_1,z') =
    \dist_T(u_1,z')$.

    Since the quasi-median $u'v'z'$ is an equilateral metric
    triangle, we have
    $$
    \begin{aligned}
    2 \cdot \dist_G(u,v)
    &\ge \dist_G(u,u') + \dist_G(u',z') + \dist_T(z',v) \\
    &= \dist_G(u,u_1) + \dist_G(u_1,z') + \dist_T(z',v) \\
    &= \dist_G(u,u_1) + \dist_T(u_1,v).
    \end{aligned}
    $$
This concludes the proof.
\end{proof}

Stated informally, Lemma \ref{lem_exits} asserts that if $u\in F(x)$ and $T:=
\partial^* F(x)$, then the shortest paths from $u$ to any vertex of $T$ are
``close'' to a path passing via $u_1$ or via $u_2$. The vertices $u_1$ and $u_2$ are called the \emph{exits} of vertex $u$. If $u$ stores the information relative to the exits $u_1$ and $u_2$, then approximate
distances between $u$ and all vertices of $T$ can be easily computed.

\begin{remark}
The inequality of Lemma \ref{lem_exits} is tight as shown by vertices $u$ and $v$ from Figure~\ref{fig_classification_1pp-neighboring}.
\end{remark}

\begin{figure}[htb]
    \begin{minipage}{0.30\linewidth}
        \centering
\includegraphics[width=\linewidth]{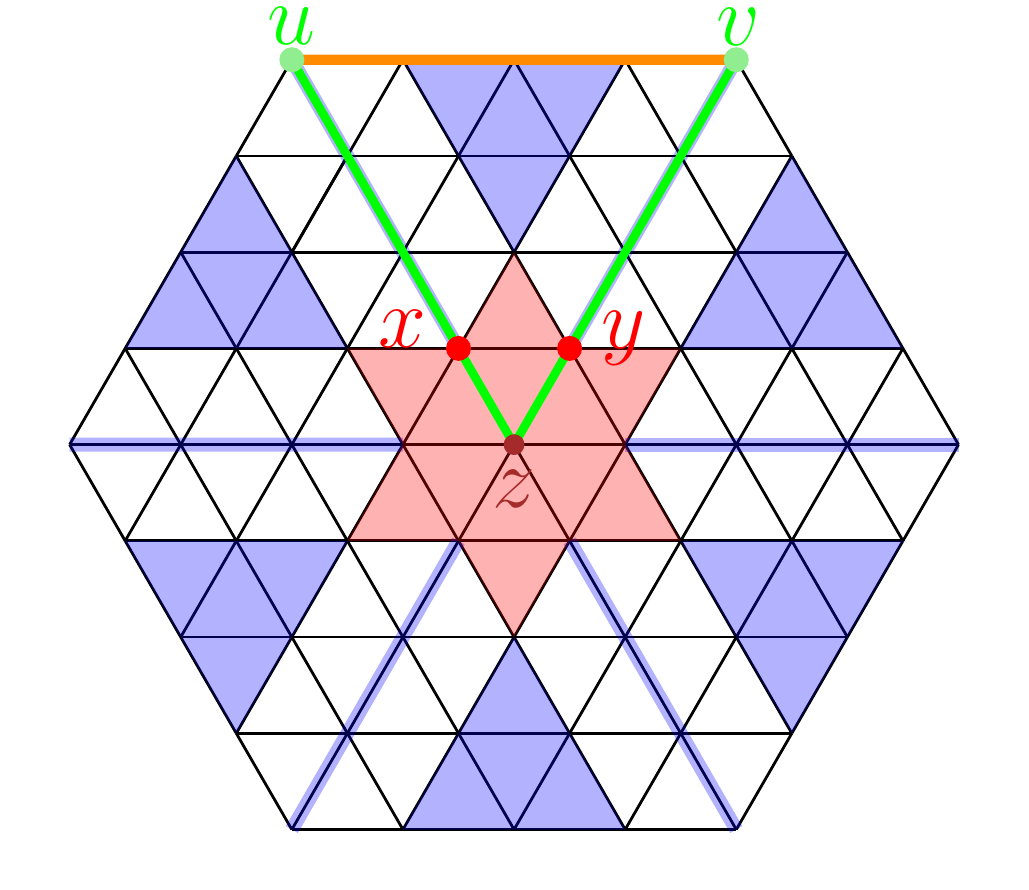}
    \end{minipage}
    \caption{
        \label{fig_classification_1pp-neighboring}
        The vertices $u$ and $v$ coincide with their respective exits.
    }

\end{figure}

\begin{figure}[htb]
    \centering
    \begin{minipage}{0.49\linewidth}
        \centering
        \includegraphics[width=0.53\linewidth]{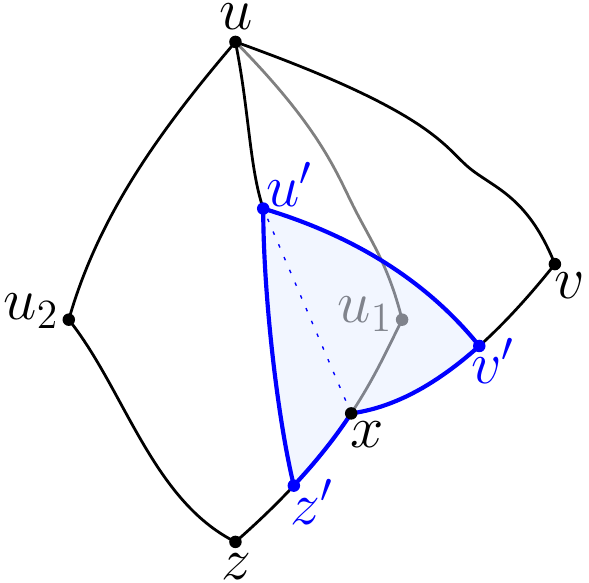}
    \end{minipage}
    \begin{minipage}{0.49\linewidth}
        \centering
        \includegraphics[width=0.6\linewidth]{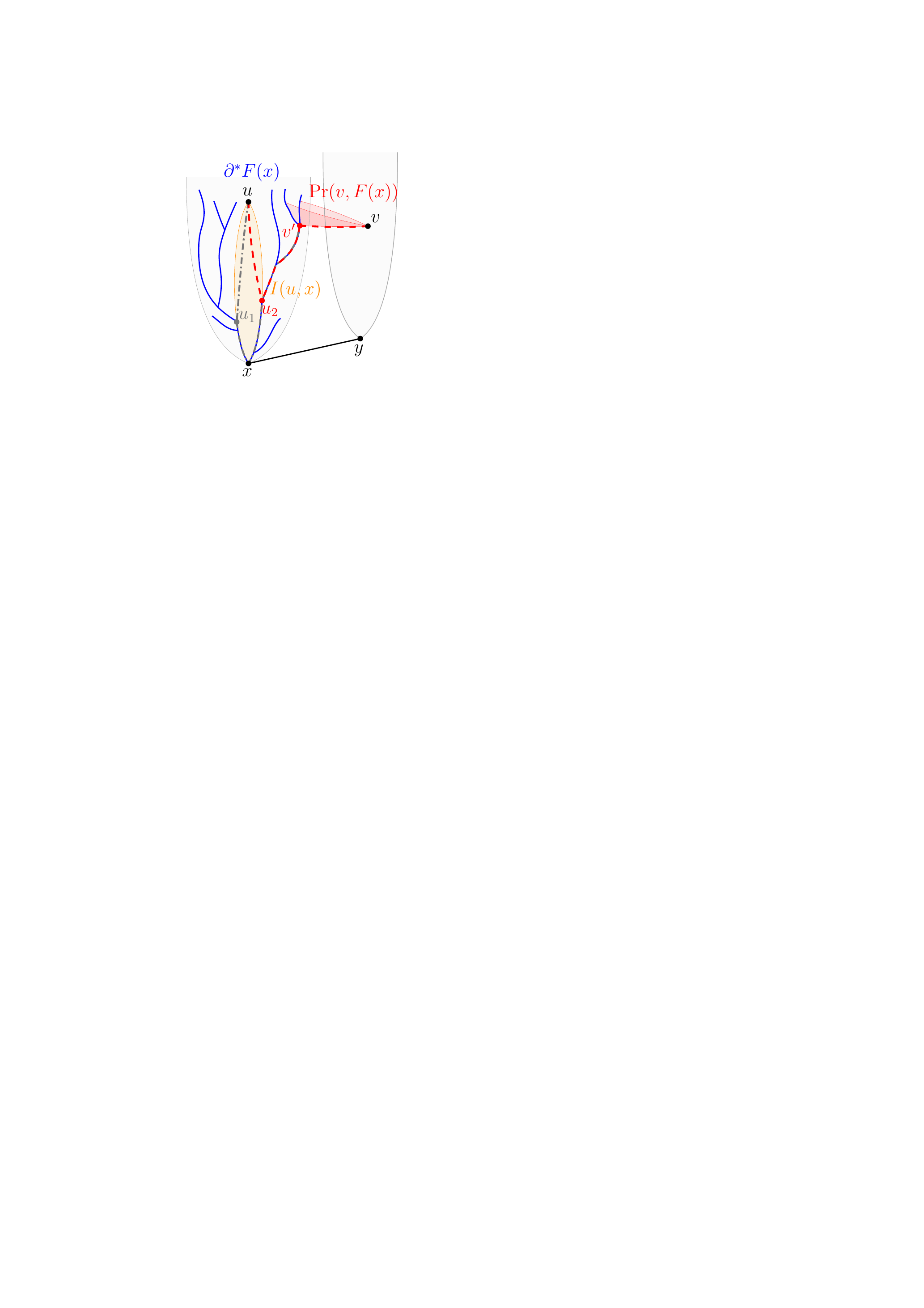}
    \end{minipage}
    \caption{
        \label{fig_2_representants_bordure}
        \label{fig_dist_1pc-neighboring}
        Notations of Lemma \ref{lem_exits} (left) and illustration of the
        entrance and exits used in Lemma \ref{lem_dist_1pc-neighboring}
        (right).
    }
\end{figure}

\section{Shortest paths and classification of pairs of vertices}
\label{sect_classif_vertices_dim2_bridged}

In this section, we characterize the pairs of vertices of $G$ which are connected by a shortest
path passing via the center $z$ of $\St(z)$ (Lemma
\ref{lem_separated_vertices_dim2_bridged}).
We also exhibit the cases for which passing via $z$ can lead to a
multiplicative error $2$ (Lemma
\ref{lem_quasi-separated_vertices_dim2_bridged}).
Finally, we present the cases where our algorithm could make an error of
at most $4$ (Lemma \ref{lem_dist_1pc-neighboring}).
In this last case, our analysis might not be tight.

Let $x$ and $y$ be two vertices of $\St(m)$ and let $(u,v) \in F(x) \times
F(y)$. If $F(x) = F(y)$, then $u$ and $v$ are called \emph{close}.
When $F(x)$ and $F(y)$ are as described in Lemma
\ref{lem_separated_vertices_dim2_bridged}, i.e., if $z \in I(u,v)$, then $u$
and $v$ are called \emph{separated}.
If $F(x)$ and $F(y)$ are 1-neighboring, one of the fibers being a panel and the
other a cone, then $u$ and $v$ are called \emph{1pc-neighboring}.
If $F(x)$ and $F(y)$ denote two 2-neighboring cones, then $u$ and $v$ are
\emph{2cc-neighboring}.
In remaining cases, $u$ and $v$ are said to be \emph{almost separated}.

\subsection{Separated vertices}
For separated vertices $u,v$, clearly $\dist_G(u,z)+\dist_G(z,v)$ is just the distance
$\dist_G(u,v)$. Next lemma establishes which pairs of vertices are separated.
\begin{lemma}
    \label{lem_separated_vertices_dim2_bridged}
    Let $u \in F(x)$ and $v \in F(y)$.
    Then $z \in I(u,v)$ iff $F(x)$ and $F(y)$ are distinct and
    either:
    (i) both are panels and are $k$-neighboring, for $k \ge 2$;
    (ii) one is a panel and the other is a $k$-neighboring cone, for $k \ge 3$;
    (iii) both are cones and are $k$-neighboring, for $k \ge 4$.
\end{lemma}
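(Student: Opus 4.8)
The plan is to prove both directions. For the ``if'' direction, suppose $F(x) \neq F(y)$ and one of (i)--(iii) holds; I want to show $\dist_G(u,v) = \dist_G(u,z) + \dist_G(z,v)$, equivalently $z \in I(u,v)$. The key observation is that the ``close'' distances between $x$ and $y$ in $\St(z)\setminus\{z\}$ are exactly one less (in panel/panel case), or translate appropriately, to distances through $z$. Concretely, if $F(x)$ and $F(y)$ are panels that are $k$-neighboring with $k\ge 2$, then $x\sim z$, $y\sim z$, $x\not\sim y$, and by the bridged (in fact $K_4$-free) structure the only shortest $(x,y)$-path realizing distance through $N[z]$ passes via $z$ when $\dist_{\St(z)\setminus\{z\}}(x,y)\ge 2$. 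I would argue by contradiction: if $z \notin I(u,v)$, take a shortest $(u,v)$-path $P$ and let $u_0 \in F(x)$ be its last vertex in $F(x)$ and $v_0 \in F(y)$ the first vertex in $F(y)$ after $u_0$; then the subpath of $P$ between the fibers crosses a sequence of fibers, and one shows by repeatedly applying Lemmas \ref{lem_c2c_and_p2p} and \ref{lem_dist_to_base} together with the convexity of balls that this forces the crossing fibers to be ``too close'' — the path would have to enter $N[z]$ in a way contradicting that $F(x)$ and $F(y)$ are $k$-neighboring for $k$ as large as required. The three cases (i)--(iii) differ only in the bookkeeping of how much ``slack'' is needed: a panel sits at distance $1$ from $z$ and a cone at distance $2$, so crossing from panel to panel needs one intermediate step (hence $k\ge 2$), panel to cone needs two ($k\ge 3$), and cone to cone needs three ($k\ge 4$).

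For the ``only if'' direction, assume $z\in I(u,v)$ and derive that $F(x)\ne F(y)$ and that the neighboring distance is at least the stated bound. That $F(x)\ne F(y)$ is immediate: if $F(x)=F(y)=F(w)$, then by Lemma \ref{lem_bridged_and_starshaped_partitionning} the fiber $F(w)$ is isometric, so a shortest $(u,v)$-path lies entirely in $F(w)$, and $z\notin F(w)$ (since $z$ lies in its own fiber $F(z)\ne F(w)$), contradiction. Now suppose $z\in I(u,v)$ with $F(x)\ne F(y)$. Since $z\in I(u,v)$, we have $u\in I(u,z)$-side and $v\in I(z,v)$-side, and $x\in I(u,z)$, $y\in I(z,v)$ by the starshapedness of the fibers combined with the fact that the projection of $u$ on $N[z]$ ``lies under'' $x$. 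Then $\dist_G(x,y) = \dist_G(x,z)+\dist_G(z,y)$. If $F(x),F(y)$ are both panels, $\dist_G(x,y)=2$ so $x$ and $y$ are at distance $2$ in $G$; since they are not adjacent (distance $2$) their distance in $\St(z)\setminus\{z\}$ is $\ge 2$ — but one must rule out $x\sim y$ which would give $z\notin I(u,v)$ by a direct shortest-path-through-the-edge argument. The cone cases are analogous, using $\dist_G(x,z)=2$: a panel and a cone with $z$ between give $\dist_G(x,y) = 1+2 = 3$, forcing $k$-neighboring with $k\ge 3$; two cones give $\dist_G(x,y)=2+2=4$, forcing $k\ge 4$. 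The subtlety is translating the $G$-distance between the bases $x,y$ into the $\St(z)\setminus\{z\}$-distance, which requires knowing that shortest $(x,y)$-paths in $\St(z)\setminus\{z\}$ and in $G$ (that avoid $z$) agree in the relevant range; this follows because $\St(z)$ is a $2$-neighborhood and its non-central vertices, together with the adjacencies forced by the triangle condition, realize exactly the ``one-ring'' connectivity around $z$.

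The main obstacle I anticipate is the ``only if'' direction's careful case analysis ensuring that the neighboring-distance bound is sharp and not off by one: I must show, for instance, that two $k$-neighboring panels with $k$ exactly $1$ (i.e. $x\sim y$) do \emph{not} have $z$ on every $(u,v)$-geodesic — here one uses that $xy$ is an edge, applies the triangle condition to $x,y$ and $z$ to get their common neighbor $z'\sim x,y$ at distance... wait, $x,y\in N[z]$ already, so $x\sim y$ means $x,y,z$ form a triangle, and then a shortest $(u,v)$-path can route through the edge $xy$ rather than through $z$, giving a path of the same length avoiding $z$, so $z\notin I(u,v)$. Establishing cleanly that in the $k$-neighboring case with $k$ at the threshold value, $z$ \emph{is} forced onto every geodesic (and not merely onto some geodesic), is the delicate point; it rests on the uniqueness-type statements in Lemmas \ref{lem_mproj_on_N[z]} and \ref{lem_dist_to_base} and on convexity of balls, which together pin down the geodesics between the two fibers tightly enough. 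I would handle cones versus panels uniformly by always reducing to the base vertices $x$, $y$ and the quantity $\dist_G(x,z)+\dist_G(z,y)$, and then invoking Lemma \ref{lem_c2c_and_p2p} to exclude the ``wrong'' adjacencies that would allow a geodesic to bypass $z$.
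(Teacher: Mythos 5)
Your ``only if'' direction is close in spirit to the paper's forward argument (the paper uses a quasi-median $u'v'z'$ of $u,v,z$ and observes that if $z\in I(u,v)$ then neighbors of $z$ on the two sides of the geodesic cannot be adjacent; your reduction to $z\in I(x,y)$ and then to $\dist_G(x,y)$ is essentially the same idea, and since $\St(z)\setminus\{z\}$ is an induced subgraph of $G$, $\dist_{\St(z)\setminus\{z\}}(x,y)\ge\dist_G(x,y)$ gives the required bound directly). However, your treatment of $F(x)\ne F(y)$ in this direction has a gap: producing \emph{one} shortest $(u,v)$-path that lies inside the fiber $F(w)$ and therefore avoids $z$ does not show $z\notin I(u,v)$, since $I(u,v)$ collects vertices on \emph{any} geodesic. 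You need the strict inequality $\dist_G(u,z)+\dist_G(z,v)>\dist_G(u,v)$, which does follow: since $F(w)$ is starshaped with respect to $w$, $w\in I(u,z)\cap I(v,z)$ and $w\ne z$, so $\dist_G(u,z)+\dist_G(z,v)=\dist_G(u,w)+\dist_G(w,v)+2\dist_G(w,z)>\dist_G(u,v)$. This is exactly the paper's one-line argument and you should make it explicit.

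The serious gap is in your ``if'' direction. Your plan is to assume $z\notin I(u,v)$, take a geodesic $P$, and ``track the sequence of fibers'' $P$ crosses between $F(x)$ and $F(y)$, hoping to derive a contradiction with the $k$-neighboring hypothesis. But this only yields an \emph{upper} bound $\dist_{\St(z)\setminus\{z\}}(x,y)\le (\text{number of fiber changes along }P)$, since consecutive vertices of $P$ lie in $1$-neighboring fibers (Lemma \ref{lem_c2c_and_p2p}). That is the wrong inequality: you need to show the fibers are \emph{close} (so as to contradict $k\ge 2,3,4$), but a geodesic avoiding $z$ can a priori cross many fibers, making the upper bound useless. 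There is no obvious reason within your sketch that such a geodesic crosses only one fiber boundary or that it is forced to enter $N[z]$. The paper handles this direction by a quasi-median argument that directly ties the geodesic structure back to $z$: letting $u'v'z'$ be a quasi-median of $u,v,z$, either $z'=z$, in which case $\Delta(u',v',z)$ is a flat triangle (Lemma \ref{lem_MT_structure_K4-free}) forcing the bases to be adjacent in the relevant sense, or $z'\ne z$, in which case the neighbor $z''$ of $z$ on $I(z,z')$ lies in $I(u,z)\cap I(v,z)$ and, by Lemma \ref{lem_mproj_on_N[z]}, must coincide with the projections of both $u$ and $v$, forcing $F(x)=F(y)$. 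This case split is what pins the fibers to be $1$-, ($1$ or $2$)-, or ($2$ or $3$)-neighboring respectively, giving the contrapositive cleanly; your fiber-crossing sketch does not replicate it and would need a genuinely different mechanism (not present in the paper) to control the number of crossings.

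A smaller inaccuracy: your ``key observation'' that ``the only shortest $(x,y)$-path realizing distance through $N[z]$ passes via $z$'' is not correct as stated; when $\dist_G(x,y)=2$, there may be common neighbors of $x$ and $y$ other than $z$. Fortunately this uniqueness is not needed anywhere, so it is cosmetic, but it signals that this part of your intuition about why the ``if'' direction should hold is not load-bearing.
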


\begin{proof}
    Consider a quasi-median $u'v'z'$ of the triplet $u,v,z$. The vertex $z$ belongs to a
    shortest $(u,v)$-path if and only if $u' = v' = z' = z$. In that case, let
    $s \in I(u,z)$ and $t \in I(v,z)$ be two neighbors of $z$.
    Since $z$ belongs to a shortest $(u,v)$-path, $s$ and $t$ cannot be
    adjacent. It follows (see Fig. \ref{fig_separated_vertices_dim2_bridged})
    that $F(x)$ and $F(y)$ are $k$-neighboring with:
    (i) $k \ge 2$ if $F(x)$ and $F(y)$ are both panels;
    (ii) $k \ge 3$ if one of $F(x)$ and $F(y)$ is a cone, and the other a panel;
    (iii) $k \ge 4$ if $F(x)$ and $F(y)$ are both cones.

    For the converse implication, we consider the cases where $z$ does not
    belong to a shortest $(u,v)$-path.
    First notice that if $F(x) = F(y)$, then $z$ cannot belong to such a
    shortest path because, then, $\dist_G(u,v) \le \dist_G(u,x) + \dist_G(x,v)
    < \dist_G(u,z) + \dist_G(z,v)$. We now assume that $F(x) \ne F(y)$.
    Three cases have to be considered depending on the type of $F(x)$ and
    $F(y)$.

    \smallskip\noindent
    \textbf{Case 1.} $F(x)$ and $F(y)$ are both panels.
    If $z = z'$, then according to Lemma \ref{lem_MT_structure_K4-free}, $x$
    and $y$ must be the two neighbors of $z$, respectively lying on the
    shortest $(z,u')$- and $(z,v')$-paths, and $x \sim y$, i.e., $F(x)$ and
    $F(y)$ are 1-neighboring. If $z \ne z'$, we consider a vertex $z'' \in I(z,z')$
    adjacent to $z$. Then $z'', x \in I(u,z)$ and, since $u$ belongs to a panel, $z''=x$.
    With the same arguments, we obtain that $z'' = y$. Consequently, $F(x) =
    F(y)$, contrary to our assumption.

    \smallskip\noindent
    \textbf{Case 2.} $F(x)$ is a cone and $F(y)$ is a panel (the symmetric
    case is similar).
    Let $x'$ and $x''$ denote the two neighbors of $x$ in the interval $I(x,z)$.
    If $z = z'$, then by Lemma \ref{lem_MT_structure_K4-free}, $y$ and
    $x'$ (or $x''$) must belong to the deltoid $\Delta(u',v',z')$ and then $x' \sim y$
    (or $x'' \sim y$). It follows that $F(x)$ and $F(y)$ are 2-neighboring.
    If $z \ne z'$, we consider again a neighbor $z''$ of $z$ in $I(z,z')$.
    Since $x', x'' \in I(u,z)$, $z''$ must coincide with $x'$ or with $x''$,
    say $z'' = x'$. Also, $z'' = y$.
    Consequently, $F(x)$ and $F(y)$ are 1-neighboring.

    \smallskip\noindent
    \textbf{Case 3.} $F(x)$ and $F(y)$ are both cones.
    Let $x'$ and $x''$ denote the two neighbors of $x$ in $I(x,z)$, and let
    $y'$ and $y''$ be those of $y$ in $I(z,y)$. Again, if $z = z'$, then $x'$
    (or $x''$) and $y'$ (or $y''$) belong to the deltoid $\Delta(u',v',z')$,
    leading to $x' \sim y'$ and to the fact that
    $F(x)$ and $F(y)$ are 3-neighboring. If $z \ne z'$, we consider
    $z'' \in I(z,z')$, $z'' \sim z$. By arguments
    similar to those used in previous cases, we obtain that $z'' = x' = y'$
    (up to a renaming of the vertices $x''$ and $y''$). It follows that $F(x)$
    and $F(y)$ are 2-neighboring.
\end{proof}

\begin{figure}[htb]
    \centering
    \includegraphics[width=0.65\linewidth]{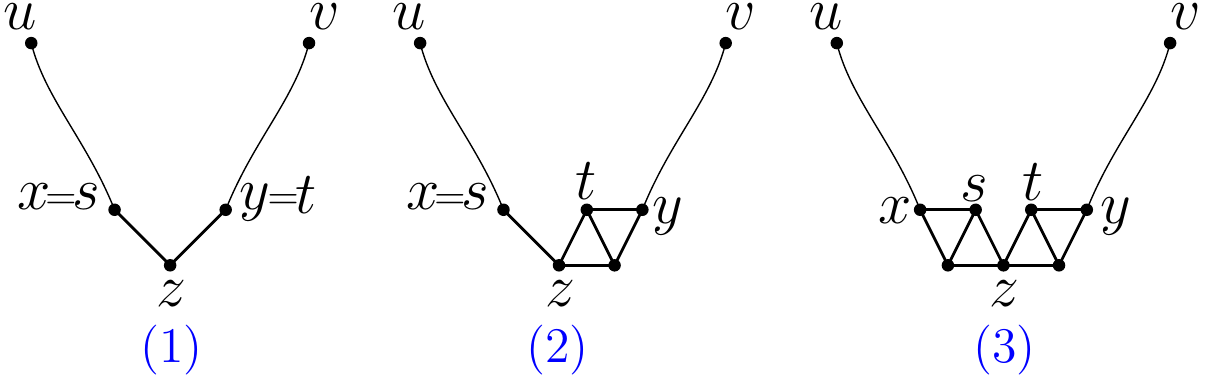}
    \caption{
        \label{fig_separated_vertices_dim2_bridged}
        Illustration of the proof of Lemma \ref{lem_separated_vertices_dim2_bridged}.
    }
\end{figure}

\subsection{Almost separated vertices} The following lemma show that in case of
almost separated vertices, $\dist_G(u,z)+\dist_G(z,v)$ is still a
good approximation of the distance $\dist_G(u,v)$:

\begin{lemma}
    \label{lem_quasi-separated_vertices_dim2_bridged}
    Let $u\in F(x)$ and $v\in F(y)$ be almost separated. Then,
    $
    \dist_G(u,v) \le \dist_G(u,z) + \dist_G(z,v) \le 2 \cdot \dist(u,v).
    $
\end{lemma}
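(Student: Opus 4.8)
The plan is to pass to a quasi-median $u'v'z'$ of the triplet $u,v,z$ and to show that, because $u$ and $v$ are almost separated, its $z$-corner collapses to $z$ itself; the estimate then becomes a one-line computation. Recall that such a quasi-median exists, that it is an equilateral metric triangle of some size $c$ (Proposition~\ref{metric_triangles}), that $z'\in I(z,u')\cap I(z,v')$, and that by definition there are shortest $(u,z)$- and $(v,z)$-paths through $u',z'$ and $v',z'$; since $z'\in I(z,u')$ and $z'\in I(z,v')$, these paths visit the vertices in the orders $u,u',z',z$ and $v,v',z',z$. Setting $a:=\dist_G(u,u')$, $b:=\dist_G(v,v')$ and $d:=\dist_G(z,z')$, this gives $\dist_G(u,v)=a+b+c$ and $\dist_G(u,z)+\dist_G(z,v)=a+b+2c+2d$. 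The left inequality of the lemma is just the triangle inequality, and the right inequality $\dist_G(u,z)+\dist_G(z,v)\le 2\dist_G(u,v)$ is equivalent to $2d\le a+b$; so everything reduces to proving that $d=0$, that is, $z'=z$.

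To prove $z'=z$ I would suppose $d\ge 1$ and derive a contradiction. The $(z,z')$-geodesic is then a common prefix of a shortest $(z,u)$-path and of a shortest $(z,v)$-path; let $p\ne z$ be its first vertex, so $p$ is a neighbor of $z$, hence a panel center of $\St(z)$. The main step is to identify $p$ from the fibers of $u$ and $v$. Using Lemma~\ref{lem_mproj_on_N[z]}, the fact that $z\notin\mproj{w}{N[z]}$ whenever $w\notin N[z]$, and the definitions of panels and cones, one shows that a neighbor of $z$ lying on a shortest $(z,u)$-path must equal $x$ if $F(x)$ is a panel, and must be one of the two feet of $x$ in $N(z)$ if $F(x)$ is a cone; symmetrically for $v$ and $F(y)$. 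A short case analysis then finishes the proof. If $F(x)$ and $F(y)$ are both panels, $p=x=y$ contradicts $F(x)\ne F(y)$. If one of them, say $F(x)$, is a panel and the other a cone, then $x=p$ is a foot of $y$, so $x\sim y$; hence $F(x)$ and $F(y)$ are $1$-neighboring and $u,v$ are 1pc-neighboring, contrary to the hypothesis. If both are cones, then $p$ is a common foot of $x$ and $y$, hence adjacent to both; since $x\not\sim y$ by Lemma~\ref{lem_c2c_and_p2p}, the centers $x$ and $y$ are at distance $2$ in $G[\St(z)\setminus\{z\}]$, so $F(x)$ and $F(y)$ are $2$-neighboring cones and $u,v$ are 2cc-neighboring, again a contradiction. (The corner cases $u=z$ or $v=z$ make the claim trivial, and the cases where $u$ or $v$ is a panel or cone center are subsumed above, taking $x=u$ or $y=v$.) Hence $z'=z$, $d=0$, and the inequality follows.

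I expect the delicate point to be the identification of the first vertex $p$: this is where the metric description of the fibers must be combined with the projection lemmas, with a little care needed to distinguish panels (where $p$ is forced) from cones (where $p$ is one of two feet). Everything downstream of that is bookkeeping. Finally, the bound is tight: $2d=a+b$ with $a=b=d=0$ occurs exactly when $u,v,z$ themselves form an equilateral metric triangle with $I(u,z)\cap I(v,z)=\{z\}$, and then $\dist_G(u,z)+\dist_G(z,v)=2\dist_G(u,v)$.
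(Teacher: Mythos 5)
Your proof is correct, and it shares the paper's overall skeleton — pass to a quasi-median $u'v'z'$, prove $z'=z$, and read off the inequality from the equilateral triangle — but the way you establish $z'=z$ is genuinely different from the paper's. The paper argues \emph{directly} in each almost-separated configuration: using Lemma~\ref{lem_separated_vertices_dim2_bridged} it enumerates the surviving combinations of fiber types and $k$-neighboring, and for each shows that $z'\in I(u,z)\cap I(v,z)$ confines $z'$ to a small union of fibers attached to $x$ on one side, a small union attached to $y$ on the other, and that these two unions intersect only in $\{z\}$. You argue by \emph{contrapositive}: if $z'\ne z$, look at the first vertex $p\ne z$ on the $(z,z')$-geodesic; since $p\sim z$ and $p\in I(z,u)\cap I(z,v)$, it belongs to $\mproj{u}{N[z]}$ and $\mproj{v}{N[z]}$, so by the definitions of panel and cone it is pinned to $x$ (panel) or to a foot of $x$ (cone), and symmetrically for $y$; the resulting three cases force $(u,v)$ to be close, $1$pc-neighboring, or $2$cc-neighboring, none of which is almost separated. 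Your route is a bit cleaner, since all cases funnel through a single claim about $p$ and the case analysis is then mechanical, whereas the paper repeats the fiber-confinement argument separately for each $k$-neighboring value. One small note: the reference you really need when pinning down $p$ is the \emph{definition} of panel/cone together with the observation $p\in\mproj{u}{N[z]}$, rather than Lemma~\ref{lem_mproj_on_N[z]} itself (which only bounds the size of the projection); and when you invoke Lemma~\ref{lem_c2c_and_p2p} to get $x\not\sim y$ for two cones, you should note explicitly that $x\in F(x)$ and $y\in F(y)$, which is what makes that lemma applicable to the centers themselves.
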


\begin{proof}
    Let $u'v'z'$ be a quasi-median  of the triplet $u,v,z$.
    We have to show that $z = z'$. According to Lemma \ref{lem_separated_vertices_dim2_bridged}, four cases
    must be considered: $F(x)$ and $F(y)$ are two 1-neighboring or
    2-neighboring fibers of distinct types and $F(x)$ and $F(y)$ are two
    3-neighboring cones.

    First, let $F(x)$ and $F(y)$ be 1-neighboring, one of them being
    a panel and the other a cone.
    If $x$ and $y$ belong to a shortest $(u,v)$-path, then $z = z'$.
    Let us assume that this is not the case.
    Then there exists a cone $F(w) \sim F(x), F(y)$ such that $I(u,v) \cap F(w)
    \ne \varnothing$. We claim that, if $z' \notin F(w)$, then $z = z'$.
    Indeed, this directly follows from the fact that $z' \in I(u,z) \cap
    I(v,z)$, $x \notin I(v,z)$ and $y \notin I(u,z)$.
    We now show that $z' \notin F(w)$. Indeed, notice that $x \notin
    I(v,z)$, $y \notin I(u,z)$, and $x, y \in I(w,z)$ imply that $w \notin
    I(u,z) \cup I(v,z)$. By the definition of $u'v'z'$, we obtain that $z' \in
    I(u,z)$. If $z' \in F(w)$, this would contradicts  that  $w \in I(z',z)
    \subseteq I(u,z)$. Hence $z=z'$ in this case as well.

    Let now $F(x)$ and $F(y)$ be  2-neighboring, one of
    them being a panel and the other a cone.
    Suppose that $F(y)$ is the panel and denote by $x_1$ and $x_2$ the two
    neighbors of $x$ in $I(x,z)$. Then in  the same way as before, we show that $z = z'$.
    Indeed, $z' \in I(u,z)$ requires that $z' \in F(x) \cup F(x_1) \cup F(x_2)
    \cup \{z\}$, and $z' \in I(v,z)$ requires that $z' \in F(y) \cup \{z\}$. Consequently,  $z = z'$.

    Finally, let $F(x)$ and $F(y)$ be two 3-neighboring cones. Let $x_1,
    x_2 \in I(x,z)$ and $y_1, y_2 \in I(y,z)$ be distinct from $x$, $y$, $z$,
    and each others. Again, $z' \in I(u,z)$ leads to $z'
    \in F(x) \cup F(x_1) \cup F(x_2) \cup \{z\}$, and $z' \in I(v,z)$ leads to
    $z' \in F(y) \cup F(y_1) \cup F(y_2) \cup \{z\}$. These sets intersect only
    in the vertex $z$, so $z' = z$.
\end{proof}

\subsection{1pc-Neighboring vertices}
Let $F(x)$ be a panel and let $F(y)$ be a cone 1-neighboring $F(x)$.
We set  $T := \partial^* F(x)$. Let $u \in F(x)$ and $v \in F(y)$.
Recall that  by Lemma
\ref{lem_proj_increasing_tree}, $\Pi := \mproj{v}{T}$ induces a tree.
The vertex $v'$ of $\Pi$ closest to $x$ is called the
\emph{entrance of $v$} on the total boundary $T$.
Similarly, two vertices $u_1$ and $u_2$ such as described in Lemma
\ref{lem_exits} are called the \emph{exits of
$u$} on the total boundary $T$. See Fig. \ref{fig_dist_1pc-neighboring}
(right) for an illustration of the notations of this paragraph.

\begin{lemma}
    \label{lem_dist_1pc-neighboring}
    Let $u \in F(x)$ and $v \in F(y)$ be two 1pc-neighboring vertices, where
    $F(x)$ is a panel and $F(y)$ a cone.
    Let $T$, $u_1$, $u_2$ and $v'$ be  as described above. Then,
    $$
    \begin{aligned}
    \dist_G(u,v)
        &\le \min \{
            \dist_G(u,u_1) + \dist_T(u_1,v'),
            \dist_G(u,u_2) + \dist_T(u_2,v')
        \} + \dist_G(v',v)    \\
        &\le 4 \cdot \dist_G(u,v).
    \end{aligned}
    $$
\end{lemma}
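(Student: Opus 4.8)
The plan is to treat the two inequalities separately. The left inequality is a routine triangle-inequality computation: since the starshaped tree $T=\partial^*F(x)$ is a (spanning) subgraph of $G[\partial^*F(x)]$, we have $\dist_G(a,b)\le\dist_T(a,b)$ for all $a,b\in T$, so for each $i\in\{1,2\}$,
$$\dist_G(u,v)\le\dist_G(u,u_i)+\dist_G(u_i,v')+\dist_G(v',v)\le\dist_G(u,u_i)+\dist_T(u_i,v')+\dist_G(v',v),$$
and minimizing over $i$ gives the claim.

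For the right inequality, write $d:=\dist_G(u,v)$. Since $T=\partial^*F(x)$ is a starshaped tree rooted at $x$ (Lemma~\ref{lem_total_boundary_starshaped_tree}) and $u_1,u_2$ are by definition the exits of $u$ on $T$, Lemma~\ref{lem_exits} applied with target $v'\in T$ gives $\min_i\{\dist_G(u,u_i)+\dist_T(u_i,v')\}\le 2\dist_G(u,v')$. Moreover the entrance satisfies $\dist_G(v',v)=\dist_G(v,F(x))$ by Lemma~\ref{lem_entrance}, and this is at most $d$ because $u\in F(x)$; in particular $\dist_G(u,v')\le d+\dist_G(v,v')\le 2d$. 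Hence the middle quantity of the statement is bounded by
$$2\dist_G(u,v')+\dist_G(v',v)=\dist_G(u,v')+\bigl(\dist_G(u,v')+\dist_G(v',v)\bigr),$$
so the right inequality will follow once we establish the key estimate
$$\dist_G(u,v')+\dist_G(v',v)\le 2\,\dist_G(u,v),\qquad(\star)$$
since $(\star)$ together with $\dist_G(u,v')\le 2d$ bounds the middle quantity by $2d+2d=4d$. Observe that $(\star)$ is automatic when $\dist_G(v,F(x))\le d/2$ (as $\dist_G(u,v')\le d+\dist_G(v,F(x))$), so only the case where $v$ is ``deep'' relative to $F(x)$ needs an argument; moreover, when one can show the stronger statement that some shortest $(u,v)$-path leaves $F(x)$ at a vertex $w$ of $\mproj{v}{F(x)}$, the same bookkeeping (using $\dist_G(w,v)=\dist_G(v,F(x))$, $w,v'\in\mproj{v}{F(x)}$, and the depth bound of Lemma~\ref{lem_entrance}) yields the sharper bound $2d$ — which is presumably why the constant $4$ in the statement need not be tight.

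Proving $(\star)$ — equivalently $\dist_G(u,v')\le 2\dist_G(u,v)-\dist_G(v,F(x))$, or the displayed stronger routing statement — is the heart of the matter and, I expect, the main obstacle. I would take a shortest $(u,v)$-path and track its last vertex $w$ inside $F(x)$, which then lies on $\partial^*F(x)=T$; since $F(x)$ is a panel, the vertex of the path immediately after $w$ must lie in a cone $1$-neighbouring $F(x)$ by Lemma~\ref{lem_c2c_and_p2p} together with the structure of total boundaries (Lemma~\ref{lem_total_boundary_starshaped_tree}), and Lemma~\ref{lem_dist_to_base} then controls the distances from $w$ to $x$ and to the apex of that cone. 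Using isometry of the fibers (Lemma~\ref{lem_bridged_and_starshaped_partitionning}), convexity of balls, and the fact that $\mproj{v}{F(x)}=\mproj{v}{T}$ induces a tree of bounded depth around $v'$ (Lemmas~\ref{lem_proj_increasing_tree} and~\ref{lem_entrance}), one should be able to reroute the path near $w$ so that it stays in $F(x)$ until it meets $\mproj{v}{F(x)}$, forcing $v'$ to lie essentially between $u$ and $v$ and hence giving $(\star)$. This step requires a case analysis on how the shortest path travels from the panel $F(x)$ to the cone $F(y)$ (possibly via intermediate cones and panels); everything else is the bookkeeping above.
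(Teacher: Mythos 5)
Your left inequality is fine (a routine triangle inequality using $\dist_G\le\dist_T$ on the starshaped tree), and the reduction via Lemma~\ref{lem_exits} to bounding $2\dist_G(u,v')+\dist_G(v',v)$ is correct bookkeeping. However, everything then hinges on the estimate $(\star)$, namely $\dist_G(u,v')+\dist_G(v',v)\le 2\dist_G(u,v)$, which you correctly flag as the heart of the matter and which you do not prove. The sketch you offer for it does not close the gap: the only concrete mechanism you propose is to show that a shortest $(u,v)$-path leaves $F(x)$ at a vertex $w\in\mproj{v}{F(x)}$, but there is no reason this should hold in general (and the paper's proof structure implicitly acknowledges this, since it explicitly treats the case where the exit point $u'\in I(u,v)\cap T$ and $v'$ lie on \emph{different} branches of $T$ — exactly the situation in which $u'\notin\mproj{v}{F(x)}$). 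When $w\notin\mproj{v}{F(x)}$, the bound $\dist_G(w,v')\le\dist_G(v,v')$ from Lemma~\ref{lem_entrance} is unavailable, and only the trivial $\dist_G(w,v')\le\dist_G(w,v)+\dist_G(v,v')$ remains, which gives $\dist_G(u,v')+\dist_G(v',v)\le 3d$ rather than $2d$; the talk of ``rerouting the path so that it stays in $F(x)$ until it meets $\mproj{v}{F(x)}$'' is wishful, since a shortest path cannot be lengthened and there is no argument offered that $v'$ would end up near any such rerouted path.

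The paper's actual proof follows a genuinely different route. It sets $u'\in I(u,v)\cap T$ (closest to $\Pi$) so that $\dist_G(u,v)=\dist_G(u,u')+\dist_G(u',v)$, uses Lemma~\ref{lem_exits} only for $\dist_G(u,u_1)+\dist_T(u_1,u')\le 2\dist_G(u,u')$, and then confronts the tree-distance $\dist_T(u',v')$ directly: in the easy case ($u'$ and $v'$ on the same branch) one gets $\dist_T(u',v')\le\dist_G(u',v)$ and a $2d$-bound, and in the hard case (distinct branches with nearest common ancestor $t$) it proves $\dist_T(u',v')\le 3\dist_G(u',v)$ via Claim~\ref{claim_diamond_bridged}, which is a dedicated combinatorial-geometry argument applying the discrete Gauss-Bonnet formula (Theorem~\ref{thm_Gauss-Bonnet}) to a minimal ``bigon'' inside the burned lozenge $I(s,t)$ to show that this bigon is in fact a full lozenge, yielding the needed side-length equalities. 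Nothing in your sketch replaces that Gauss-Bonnet argument, and without it — or a correct proof of $(\star)$ — the upper bound $4\dist_G(u,v)$ is not established.
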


\begin{proof}
    Assume $\min \{ \dist_G(u,u_1) + \dist_T(u_1,v'),
    \dist_G(u,u_2) $ $+$ $ \dist_T(u_2,v') \}$ is reached for $u_1$.
    Let $u' \in I(u,v) \cap T$ be closest possible from $\Pi$. Then the
    exact distance between $u$ and $v$ is $\dist_G(u,u') + \dist_G(u',v)$ and
    we have to compare it with $\dist_G(u,u_1) + \dist_T(u_1,v') +
    \dist_G(v',v)$.
    By triangle inequality, we obtain:
    \begin{equation*}
        \dist_G(u,u_1) + \dist_T(u_1,v')
           \le \dist_G(u,u_1) + \dist_T(u_1,u') + \dist_T(u',v').
    \end{equation*}
    Since $v' \in \Pi$, we obtain  $\dist_G(v',v) \le \dist_G(u',v)$.
    By the choice of $u_1$ and by Lemma \ref{lem_exits}, we also know that
    $\dist_G(u,u_1) + \dist_T(u_1,u') \le 2 \cdot \dist_G(u,u')$.
    It remains to compare $\dist_T(u',v')$ with $\dist_G(u',v)$.

    Suppose first that $u'$ and $v'$ belong to a same branch of $T$ and
    consider a quasi-median $u_0'v_0'v_0$ of $u',v',v$.
    Then $I(u',v') \subseteq T$ (because $T$ is starshaped) leads to $u_0',
    v_0' \in T$. The vertex $u'$ being the closest possible to $\Pi$, we have $u' = u_0'$.
    Since $v_0' \in I(v,v')$, $v_0' \in \Pi$.
    Also, since $v'$ is the closest possible to $u'$ vertex, we have $v' = v_0'$.
    Finally, since each quasi-median is an equilateral metric triangle, we conclude that
    $\dist_G(u,u') = \dist_G(u,v')$.
    Consequently, $u' \in \Pi$.
    All  this allows us to obtain the inequalities $\dist_T(u',v') \le
    \dist_G(u',v)$ (Lemma \ref{lem_entrance}), and
    $$
    \begin{array}{rccccclcl}
        \dist_G(u,v)
            &\le& \dist_G(u,u_1)
            &+& \multicolumn{3}{c}{\dist_T(u_1,v')}
            &+& \dist_G(v',v) \\
        &\le& \dist_G(u,u_1)
            &+& \dist_T(u_1,u')
            &+& \dist_T(u',v')
            &+& \dist_G(v',v) \\
        &\le& \multicolumn{3}{c}{2 \cdot \dist_G(u,u')}
            &+& \dist_G(u',v)
            &+& \dist_G(u',v) \\
        &=& \multicolumn{7}{l}{2 \cdot \dist_G(u,v).}
    \end{array}
    $$

    Suppose now that $u'$ and $v'$ belong to distinct branches of $T$, and denote by
    $t$ their nearest common ancestor in this starshaped tree.
    Let $u''$ be the closest vertex to $t$ in the metric projection of $v$ on
    the branch of $u'$.
    Consider a quasi-median $u_0'v_0't_0$ of $u',v',t$.
    Since $T$ is starshaped, $t_0 \in I(u',t) \cap I(v',t)$, and $t$ nearest
    common ancestor of $u'$ and $v'$, we conclude  that $t = t_0$.
    Moreover, $u_0'$ and $v_0'$  belong to the branches of $u'$ and
    of $v'$, respectively.
    We distinguish  five cases, illustrated in Fig.~\ref{figure-1pc}.
    Blue lines correspond to the exact distance $\dist_G(u',v)$, and red ones
    correspond to the approximate distance that we are comparing to it.

\begin{figure}[htb]
    \centering
      \includegraphics[width=0.8\linewidth]{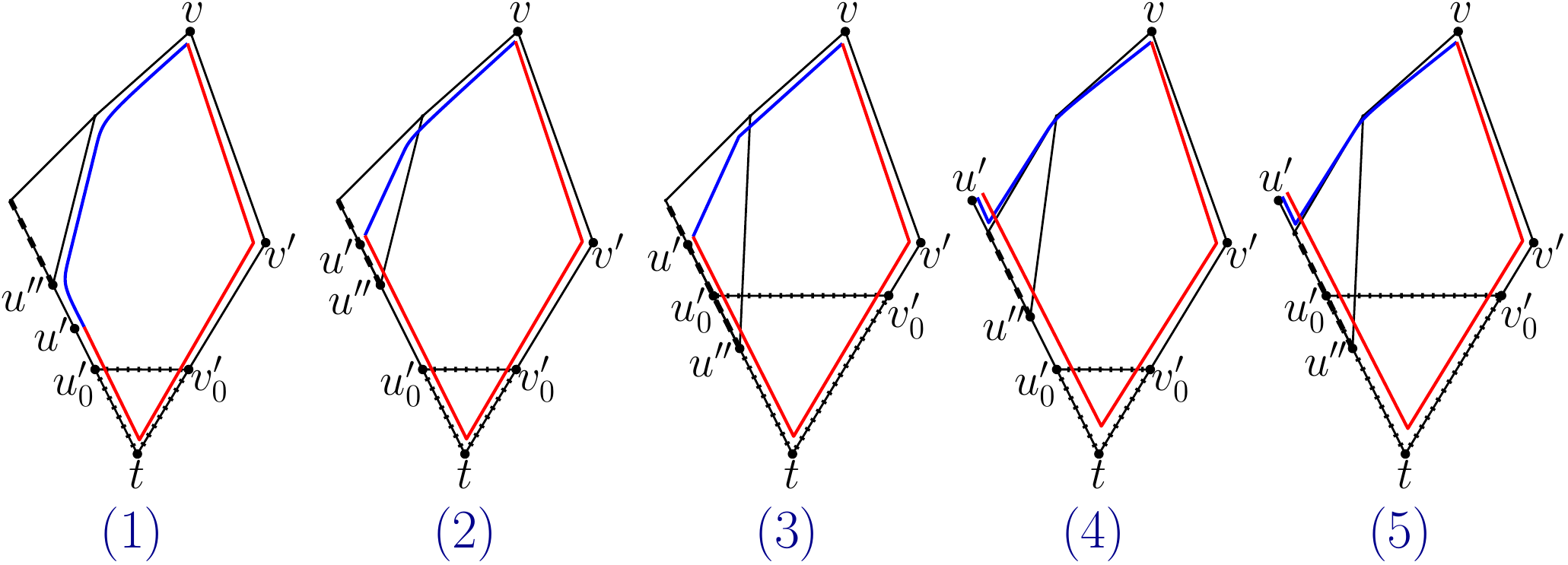}
    \caption{
        \label{figure-1pc}
        Illustration of the proof of Lemma \ref{lem_dist_1pc-neighboring}.
    }
\end{figure}

    In fact, we can notice that the error will be maximal if $u'$ is the
    extremity opposite to $u''$ in the projection of $v$ on the branch of $u'$.
    Indeed, in every considered case the error occurs on the fragment of the path
    between $u'$ and $u''$ that does not belong to the shortest $(u,v)$-path.
    The length of this fragment is maximal in that case.
    We now assert that the two following inequalities hold:

    \begin{claim}
        \label{claim_diamond_bridged}
        $\dist_G(t,v') \le \dist_G(v,u'')$, and $\dist_G(t,u'') \le
        \dist_G(v,v')$.
    \end{claim}

    \begin{proof}
        Begin by noticing that $u''$ and $v'$ belong to $I(v,t)$.
        Notice also that $I(t,u'') \cap I(t,v') = \{t\}$, but the intervals
        $I(v,u'')$ and $I(v,v')$ may intersect on some part of $I(v,t)$, not only in $\{v\}$.
        Let $s$ be a furthest from $v$ vertex in this intersection
        ($v$ and $s$ might coincide, as  is the case in the illustration of
        the five cases above, to make it simpler).
        We will prove that $\dist_G(s,u'') = \dist_G(v',t)$, and that
        $\dist_G(s,v') = \dist_G(u'',t)$.

        By Lemma \ref{lem_interval_shape}, the interval $I(s,t)$ induces a burned lozenge, thus a plane graph. We call \emph{bigon}  a subgraph $D$
        of this burned lozenge $I(s,t)$ bounded by two shortest $s,t$-paths, the first one passing via $u''$ and the second one passing
        via $v'$ (for illustration, see Fig. \ref{fig_disque}). Note that each bigon is  a burned lozenge. The area of $D$ is the number
        inner faces (all triangles) of $I(s,t)$ belonging to $D$.

        Assume now that $D$ is a bigon with minimal area.
        The boundary $\partial D$ of $D$ consists of a shortest $(s,u'')$-path $P_1$, shortest $(s,v')$-path $P_2$,
        shortest $(t,u'')$-path $Q_1$, and shortest $(t,v')$-path $Q_2$.
        Notice that each corner of $D$ is either a vertex of $\partial D$ with two neighbors (corner of \emph{type
        1}) or a vertex of $\partial D$ with three  neighbors (corner of 
        \emph{type 2}).
        The two neighbors of a corner of type 1 are adjacent and the three neighbors of a corner of type 2 induce a 3-path.
        The vertices $s$ and $t$ are corners of type 1 because they have exactly
        two neighbors in $I(s,t)$ by Lemma \ref{lem_interval_shape}.
        We now assert that, among the remaining vertices of $\partial D$ of $D$, only $u''$ and
        $v'$ can be corners.
        Indeed, the paths $Q_1$ and $Q_2$ between $t$ and $u'',v'$ are
        convex (because they belong to the branches of a starshaped tree). Consequently, $Q_1$ and $Q_2$ cannot contain corners different from $t,u''$, or
        $v'$. Concerning the paths $P_1$ and $P_2$ between $s$ and $u'',v'$,
        suppose by way of contradiction that one of them, say $P_1$, contains a corner (distinct from $u''$ and $s$).
        If this corner is of type 1, we directly obtain a contradiction with
        the fact that it belongs to a shortest $(s,t)$-path.
        If this corner is of type 2, denote it by $a$, denote by $a'$ its unique  neighbor
        in the interior of $D$, and by $b$ and $c$ its neighbors in $\partial D$. Since $a'$ is adjacent to $b$ and $c$,
        $a'$ belong to a shortest $(s,t)$-path,  obtained by
        replacing $a$ by $a'$. Consequently, we created a bigon $D'$ with two triangles less than
        $D$, contradicting the minimality of $D$.

        Thus, $D$ has at most four corners, $s,t,u'',$ and $v'$. We apply to $D$ the Gauss-Bonnet formula.
        Since $D$ is a burned lozenge,  for every $w \in D \setminus \partial D$, $\kappa(w) = 0$, and for
        every $w \in \partial D \setminus \{s, t, v', u''\}$, $\tau(w) = 0$.
        Since $\tau(s) = \tau(t) = \frac{2\pi}{3}$ by the Gauss-Bonnet formula
        (Theorem~\ref{thm_Gauss-Bonnet}), $\tau(u'') + \tau(v') = \frac{2\pi}{3}$.
        It follows that either $v'$ and $u''$ are both corners of type 2, or one of
        them is a corner of type 1 and the other is not a corner.
        We can easily observe that the second case is impossible because, if
        one vertex is a corner of type 1, it cannot belong to a shortest $(s,t)$-path.
        Thus both $u''$ and $v'$ are corners of type 2. Consequently, $D$ is a full lozenge and we conclude that $\dist_G(s,u'') =
        \dist_G(v',t)$ and $\dist_G(s,v') = \dist_G(u'',t)$. This finishes the proof of the claim.
    \end{proof}

    Returning to the proof of the lemma, from the equalities $\dist_G(v,t) = \dist_G(v,u'') + \dist_G(u'',t) = \dist_G(v,v') +
    \dist_G(v',t) = \dist_G(v,u') + \dist_G(u'',t)$, we deduce that    
    $\dist_G(v,v') + \dist_T(v',u') < 4 \dist_G(v,u')$.
    Consequently,
    $$
    \begin{array}{rcccccl}
        \dist_G(u,v)
            &\le& \dist_G(u,u_1) + \dist_T(u_1,u')
            &+& \dist_T(u',v')
            &+& \dist_G(v',v) \\
        &\le& 2 \cdot \dist_G(u,u')
            &+& 3 \cdot \dist_G(u',v)
            &+& \dist_G(u',v) \\
        &\le& \multicolumn{5}{l}{4 \cdot \dist_G(u,v).}
    \end{array}
    $$
\end{proof}

\begin{figure}[htb]
    \centering
    \includegraphics[width=0.22\linewidth]{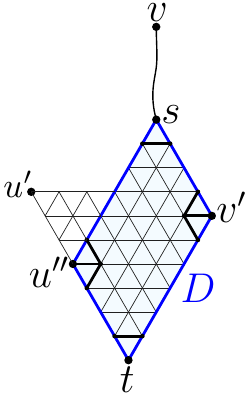}
    \caption{
        \label{fig_disque}
        Illustration of  the proof of Claim \ref{claim_diamond_bridged}.
    }
\end{figure}

\subsection{2cc-Neighboring vertices} Finally, we consider the case of 2cc-neighboring vertices.

\begin{lemma}
    \label{lem_dist_2cc-neighboring}
    Let $u$ and $v$ be two 2cc-neighboring vertices respectively belonging to
    the cones $F(x)$ and $F(y)$. Let $F(w)$ denote the panel 1-neighboring
    $F(x)$ and $F(y)$, and set $T := \partial^* F(w)$.
    Let $u'$ and $v'$ be the respective entrances of $u$ and $v$ on $T$.
    Then
    $$
        \dist_G(u,v)
        \le \dist_G(u,u') + \dist_T(u',v') + \dist_G(v',v)
        \le 4 \cdot \dist_G(u,v).
    $$
\end{lemma}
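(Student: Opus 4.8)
The plan is to reduce the 2cc-neighboring case to the 1pc-neighboring situation already handled in Lemma~\ref{lem_dist_1pc-neighboring}, by routing a shortest $(u,v)$-path through the common panel $F(w)$. First I would establish the left-hand inequality $\dist_G(u,v) \le \dist_G(u,u') + \dist_T(u',v') + \dist_G(v',v)$: since $u'$ is the entrance of $u$ on $T = \partial^* F(w)$ and $v'$ is the entrance of $v$ on $T$, both lie in $F(w)$, so concatenating a shortest $(u,u')$-path, a path in $T$ from $u'$ to $v'$, and a shortest $(v',v)$-path yields a walk from $u$ to $v$; by the triangle inequality its length is an upper bound on $\dist_G(u,v)$.

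For the right-hand inequality, the key step is to relate $\dist_G(u,u')$, $\dist_T(u',v')$, and $\dist_G(v',v)$ to $\dist_G(u,v)$. Here I would use Lemma~\ref{lem_entrance} applied twice: for the fiber $F(w)$ and the external vertex $u$, it gives $\dist_G(u,u') = \dist_G(u, w')$ for any $w' \in \Pi_u := \mproj{u}{F(w)}$, and similarly $\dist_G(v,v') = \dist_G(v, \Pi_v)$ where $\Pi_v := \mproj{v}{F(w)}$. The idea is to pick a shortest $(u,v)$-path and look at where it meets $F(w)$ — since $F(x)$ and $F(y)$ are $2$-neighboring cones with $F(w)$ the unique panel $1$-neighboring both (Lemma~\ref{lem_c2c_and_p2p} and the remark that a cone is $1$-neighboring exactly two panels force $w$ to be the common neighbor of $x$ and $y$ in $\St(z)\setminus\{z\}$), any shortest $(u,v)$-path either passes through $F(w)$ or else goes through $z$; in the latter case $u$ and $v$ would be separated rather than $2$cc-neighboring by Lemma~\ref{lem_separated_vertices_dim2_bridged}, a contradiction. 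So the path enters $F(w)$ at some vertex $a$ and leaves it at some vertex $b$, with $\dist_G(u,a) \ge \dist_G(u,u')$ and $\dist_G(v,b) \ge \dist_G(v,v')$ by definition of the metric projections, and $a,b \in F(w)$.

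From there the strategy is to bound $\dist_T(u',v')$. I would apply Corollary~\ref{corol_mult2_error_total_boundary} to $T = \partial^* F(w)$, which gives $\dist_T(u',v') \le 2\,\dist_{F(w)}(u',v')$; then, since $F(w)$ is an isometric subgraph of $G$ (Lemma~\ref{lem_bridged_and_starshaped_partitionning}), $\dist_{F(w)}(u',v') = \dist_G(u',v')$. Now $\dist_G(u',v') \le \dist_G(u',a) + \dist_G(a,b) + \dist_G(b,v') $, and using $\dist_G(u',a) = \dist_G(u,a) - \dist_G(u,u')$ (both $u'$ and $a$ being at distance $\dist_G(u,u')$, resp.\ $\dist_G(u,a)$, from $u$ with $u' \in I(u,a)$ up to the starshaped-tree structure — this needs a short argument using Lemma~\ref{lem_entrance} and the convexity of balls), and $\dist_G(a,b) \le \dist_G(u,v) - \dist_G(u,a) - \dist_G(b,v)$, one assembles
\begin{align*}
\dist_G(u,u') + \dist_T(u',v') + \dist_G(v',v)
&\le \dist_G(u,u') + 2\dist_G(u',v') + \dist_G(v',v) \\
&\le 4\,\dist_G(u,v),
\end{align*}
after collecting terms and using $\dist_G(u,u') \le \dist_G(u,a)$, $\dist_G(v,v') \le \dist_G(v,b)$, and $\dist_G(u,a) + \dist_G(a,b) + \dist_G(b,v) = \dist_G(u,v)$. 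The main obstacle I anticipate is the careful bookkeeping showing $u' \in I(u,a)$ (so that the distances add up cleanly) — this requires exploiting that $\Pi_u$ is a subtree of the starshaped tree $T$ with $u'$ its root-closest vertex (Lemma~\ref{lem_entrance}) together with the convexity of $B_{\dist_G(u,u')}(u)$, rather than being a purely formal manipulation; the symmetric claim for $v'$ and $b$ is analogous. One should also double-check that the factor does not blow past $4$, which is why the bound $\dist_T \le 2\dist_{F(w)}$ must be paired with the fact that the ``wasted'' portions $\dist_G(u',a)$ and $\dist_G(b,v')$ are themselves absorbed into $\dist_G(u,v)$ rather than added on top.
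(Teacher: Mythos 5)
Your high-level plan — route a shortest $(u,v)$-path through the common panel $F(w)$ and then bound the detour via $T$ — is the same as the paper's, but both of your two key steps have genuine gaps.

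\textbf{The dichotomy ``through $F(w)$ or through $z$'' is not correct.} Knowing that $z\notin I(u,v)$ (which follows from Lemma~\ref{lem_separated_vertices_dim2_bridged}) does \emph{not} force a shortest $(u,v)$-path to traverse $F(w)$: when the path leaves the cone $F(x)$ it enters a panel (Lemma~\ref{lem_c2c_and_p2p}), but $F(x)$ is $1$-neighboring two panels, and the path could enter the \emph{other} one and then wind around the star through a sequence of cones and panels, never touching $F(w)$ or $z$. Ruling this out is precisely the hard content of the paper's proof: it takes a quasi-median $u^*v^*w^*$ of $(u,v,w)$, reduces to the case $u=u^*\in F(x)$, $v=v^*\in F(y)$, and then builds an explicit chain of vertices $x_i,y_i,w_i$ by repeated use of the triangle condition, terminating in a forbidden $5$-wheel/$K_4$/$C_4$. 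There is no one-line replacement for that argument.

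\textbf{The bookkeeping does not close.} You need $u'\in I(u,a)$ (and symmetrically $v'\in I(v,b)$) to write $\dist_G(u',a)=\dist_G(u,a)-\dist_G(u,u')$. But $a$ is merely the first vertex of $F(w)$ along a shortest $(u,v)$-path, which need not lie in $\Pi_u=\mproj{u}{F(w)}$, and Lemma~\ref{lem_entrance} only controls distances from $u'$ to other vertices of $\Pi_u$, not to an arbitrary $a\in F(w)$. Without the interval claim, the triangle inequality gives $\dist_G(u',a)\le\dist_G(u,u')+\dist_G(u,a)$, and chasing through your estimate
\[
\dist_G(u,u')+2\dist_G(u',v')+\dist_G(v,v')
\]
yields a bound of $5\dist_G(u,v)$, not $4$. (Conversely, if $u'\in I(u,a)$ did hold unconditionally, your computation would give the factor $2$, strictly better than the lemma states — a further sign that the step is not actually available.) The paper sidesteps this entirely: once it has a boundary vertex $v''\in\partial^*F(w)$ on a shortest $(u,v)$-path, it applies Lemma~\ref{lem_dist_1pc-neighboring} to the two $1$pc-neighboring pairs $(u,v'')$ and $(v'',v)$ and simply adds the two factor-$4$ estimates. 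Your route through Corollary~\ref{corol_mult2_error_total_boundary} plus ad hoc triangle inequalities is a genuinely different tactic, and as written it does not deliver the constant.
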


\begin{proof} 
    First we prove that there exists a shortest $(u,v)$-path traversing the
    panel $F(w)$. Let $w'$ be the second common neighbor of $x$ and $z$ and $w''$
    be the second common neighbor of $y$ and $z$.
    Let $u^*v^*w^*$ be a quasi-median of the triplet $u,v,w$ and let $P(u,v)$
    be a shortest $(u,v)$-path passing via $u^*$ and $v^*$. Finally, let $w_0$
    be a neighbor of $w$ in $I(w,w^*)$. Since $w\in I(u,z)\cap I(v,z)$ and
    $u^*$ belongs to a shortest $(u,w)$-path, we conclude that $w\in I(u^*,z)$.
    Moreover, $w\in I(t,z)$ for any $t\in I(u,u^*)$. Analogously, $w\in I(t,z)$
    for any $t\in I(v,v^*)$. This implies that each of the vertices of
    $I(u,u^*)\cup I(v^*,v)$ belongs either to the panel $F(w)$ or to a cone
    defined by $w$ and a common neighbor of $w$ and $z$.

    While moving along $P(u,v)$ from $u$ to $v$, at some point we have to leave
    the cone $F(x)$. Since two cones cannot be adjacent (Lemma
    \ref{lem_c2c_and_p2p}), necessarily we have to move to a panel. If the
    first change of a fiber happens on the portion of $P(u,v)$ between $u$ and
    $u^*$, then by previous discussion, we conclude that we have to enter the
    panel $F(w)$ and we are done.  So, suppose that $u^*\in F(x)$. Analogously,
    we can suppose that $v^*\in F(y)$. Thus, further we  suppose that $u=u^*$
    and $v=v^*$ (see Figure \ref{fig_2cc-neighboring} for the notations of this
    proof).

    Since $G$ does not contain induced $C_4,C_5$, and $K_4$, one can easily see
    that $w_0\ne x,y$. Since $w_0,x\in I(w,u)$  and $w_0,y\in I(w,v)$, we
    conclude that $w_0\sim x,y$.
    By triangle condition applied to the edge $xw_0$ and $u$, we conclude that
    there exists $x_1\sim x,w_0$ one step closer to $u$ than $x$ and $w_0$.
    Analogously, there exists a vertex $y_1\sim w_0,y$ one step closer to
    $v$. Let $w_1$ be a neighbor of $w_0$ in $I(w_0,w^*)$. If $w_1$ coincides
    with $x_1$ or $y_1$ we will obtain a forbidden $C_4,C_5$, and $K_4$.
    Otherwise, since $x_1,w_1\in I(w_0,u)$ and $y_1,w_1\in I(w_0,v)$ we
    conclude that $w_1\sim x_1,y_1$. Continuing this way, i.e., applying the
    triangle condition and the forbidden graph argument, we will construct the
    vertices $x_i\in I(x_{i-1},u)\cap I(w_{i-1},u),$ $y_i\in I(y_{i-1},v)\cap
    I(w_{i-1},v),$ and $w_i\in I(w_{i-1},w^*)$, such that $x_i\sim
    x_{i-1},w_{i-1}$, $y_i\sim y_{i-1},w_{i-1}$, and $w_i\sim x_i,y_i,$ and
    $w_{i-1}$.
    After $p=\dist_G(w_0,w^*)$ steps, we will have $w_p=w^*$. Let $a$ and $b$
    be the unique neighbors of $w^*$ in $I(w^*,u)$ and $I(w^*,v)$,
    respectively (uniqueness follows from Lemma
    \ref{lem_MT_structure_K4-free}).  Since  $x_p,w_p=w^*$ have the same
    distance to $u$, by triangle condition, $x_p\sim a$. Analogously, we
    conclude that $y_p\sim b$. By Lemma \ref{lem_MT_structure_K4-free}, $a\sim
    b$. The vertices $x_p,a,b,y_p,w_{p-1},w_p$ define a 5-wheel, which cannot
    be induced. But any additional edge leads to a forbidden $K_4$ or $C_4$.
    This contradiction shows that one of the portions of $P(u,v)$ between $u$
    and $u^*$ or between $v^*$ and $v$ contains a vertex $v''$ of the panel
    $F(w)$ adjacent to a vertex of $F(x)$ or of $F(y)$. Clearly, this vertex
    $v''$ must belong to the total boundary $\partial^* F(w)$.

    Then,
    $\dist_G(u,v) = \dist_G(u,v'') + \dist_G(v'',v)$.
    According to Lemma \ref{lem_dist_1pc-neighboring}, the two following
    inequalities hold:
    $$
    \begin{array}{rcccl}
        \dist_G(u,u')   &+& \dist_T(u',v'') &\le& 4 \cdot \dist_G(u,v'') \\
        \dist_T(v'',v') &+& \dist_T(v',v)   &\le& 4 \cdot \dist_G(v'',v)
    \end{array}
    $$
    It follows that
    $$
    \begin{array}{rcccc}
        \dist_G(u,u') + \dist_T(u',v') + \dist_G(v',v)
            &\le& \dist_G(u,u') + \dist_T(u',v'')
            &+&   \dist_T(v'',v') + \dist_G(v',v) \\
            &\le& 4 \cdot \dist_G(u,v'')
            &+&   4 \cdot \dist_G(v'',v) \\
            &=&   \multicolumn{3}{c}{4 \cdot \dist_G(u,v).}
    \end{array}
    $$
\end{proof}

\begin{figure}
    \centering
    \includegraphics[width=0.3\linewidth]{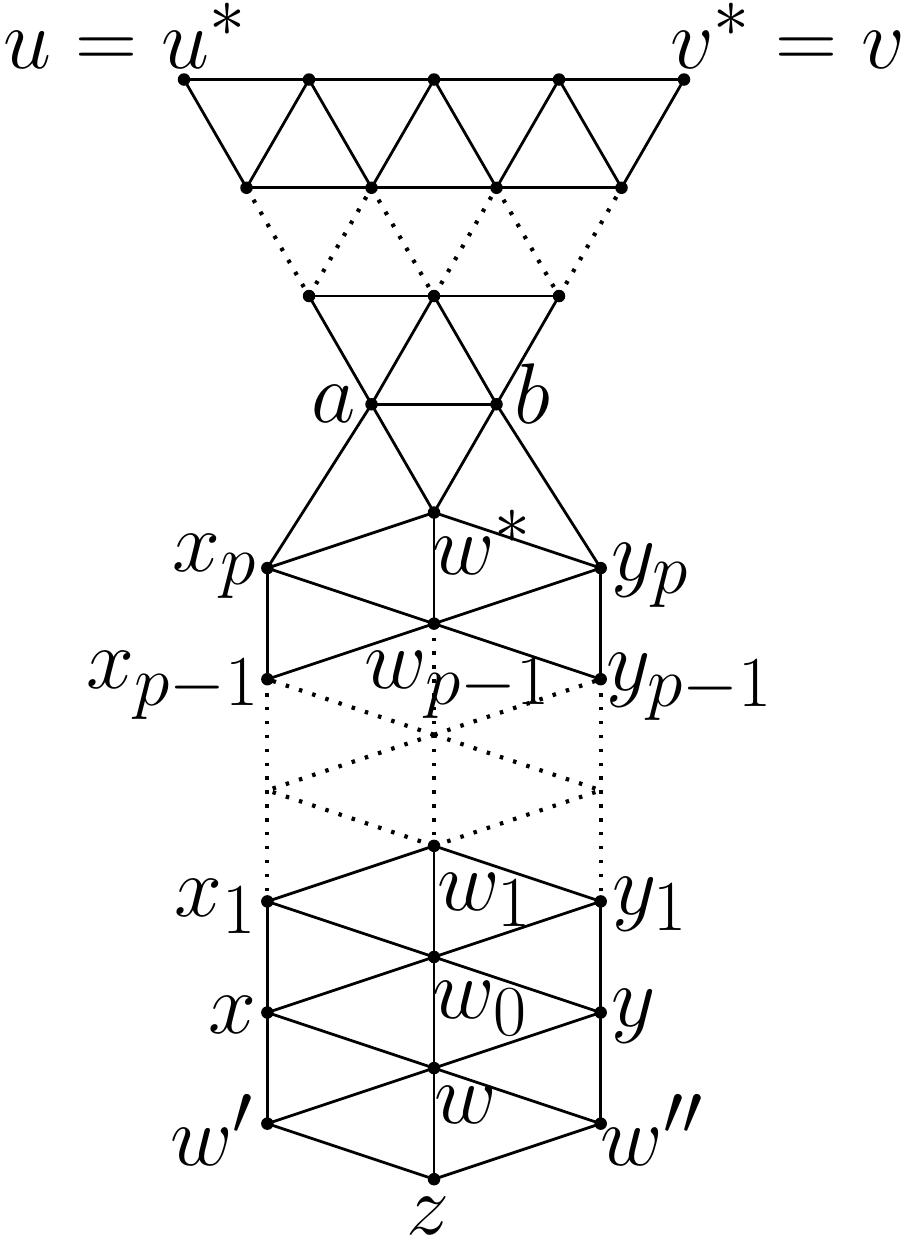}
    \caption{
        \label{fig_2cc-neighboring}
        Illustration of the proof of Lemma \ref{lem_dist_2cc-neighboring}.
    }
\end{figure}

\begin{remark}
    Lemma \ref{lem_dist_2cc-neighboring} covers a case which was not correctly
    considered in the short version of this paper.
\end{remark}

\section{Distance labeling scheme}
\label{sect_dist_labeling_dim2_bridged}

We now describe the $4$-approximate distance labeling scheme for $K_4$-free
bridged graphs. In this section, $G=(V,E)$ is a $K_4$-free
bridged graph with $n$ vertices.

\subsection{Encoding}
We begin  with a brief description of the encoding of  the star $\St(m)$
of a median vertex $m$ of $G$ (in Section
\ref{sect_distdec_bridged}, we explain how to use it to decode the distances).
Then we describe the labels $\L(u)$ given to vertices $u \in V$ by the encoding
algorithm.

\medskip\noindent
\textbf{Encoding of the star.}
Let $m$ be a median vertex of $G$ and let $\St(m)$ be the star of $m$.
The star-label of a vertex $u\in \St(m)$ is denoted by
$\Ls{u}{\St(m)}$. We set $\Ls{m}{\St(m)}:= 0$ (where $0$ will be considered as the empty set
$\varnothing$). Each neighbor of $m$ takes a distinct label in the range $\{1, \ldots, \deg(m)\}$
(interpreted as singletons). The label $\Ls{u}{\St(m)}$ of a vertex $u$ at distance $2$ from $m$ corresponds
to the concatenation of the labels $\Ls{u'}{\St(m)}$ and $\Ls{u''}{\St(m)}$ of
the two neighbors $u'$ and $u''$ of $u$ in $I(u,m)$, i.e., $\Ls{u}{\St(m)}$  is
a set of size 2.

\begin{remark}
    The labels of the vertices of $\St(m)$ not adjacent to $m$ are not necessarily
    unique identifiers of these vertices. Moreover, the labeling of $\St(m)$ does not
    allow to determine adjacency of all pairs of vertices of $\St(m)$.
    Indeed, adjacency queries between vertices encoded by a singleton cannot
    be answered; a singleton label only tells that the corresponding
    vertex is adjacent to $m$, see Fig. \ref{fig_enc_star_bridged}, right.
\end{remark}

\medskip\noindent
\textbf{Encoding of the $K_4$-free bridged graphs.}
Let $u$ denote any vertex of $G$. Let $L_0(u)$ be the unique identifier of $u$. We describe here the part $\L_i(u)$ of the
label of $u$ built at step $i\geq 1$ of the recursion by the encoding procedure
(see \distEncBri).
$\L_i(u)$ consists of three parts: ``\st'', ``\lleft'', and ``\rright''.
The first part $\L_i^{\st}$ contains information relative to the star
$\St(m)$ around the median $m$ chosen in the corresponding step:
the unique identifier $\id(m) =: \L_i^{\st[\med]}(u)$ of $m$ in $G$;
the distance $\dist_G(u,m) =: \L_i^{\st[\distance]}(u)$ between $u$ and $m$;
and a star labeling $\Ls{x}{\St(m)} =: \L_i^{\st[\rootS]}(u)$ of $u$ in
$\St(m)$ (where $x \in \St(m)$ is such that $u \in F(x)$).
This last identifier is used to determine to which type of fibers the vertex  $u$ belongs,
as well as the status (close, separated, 1pc-neighboring, or other) of the pair
$(u,v)$ for any other vertex $v \in V$. Recall that any cone has exactly two 1-neighboring panels.

The two subsequent parts, $\L_i^{\lleft}$ and $\L_i^{\rright}$, depend
whether $F(x)$ is a cone or a panel. If $F(x)$ is a panel, then $\L_i^{\lleft}$ and $\L_i^{\rright}$ contain information relative to the two exits $u_1$ and $u_2$ of $u$
on the total boundary $\partial^* F(x)$ of $F(x)$. The part $\L_i^{\lleft}$ contains (1) an exact distance labeling
$\LDt{u_1}{\partial^* F(x)} =: \L_i^{\lleft[\repD]}(u)$ of $u_1$ in the
total boundary of $F(x)$ and (2) the distance $\dist_G(u,u_1) =:
\L_i^{\lleft[\distance]}(u)$ between $u$ and $u_1$ in $G$. The part $\L_i^{\rright}$
is the same as $\L_i^{\lleft}$ with $u_2$ replacing $u_1$.

If $F(x)$ is a cone, then $\L_i^{\lleft}$ and $\L_i^{\rright}$
contain information relative to the entrances $u_1^+$ and $u_2^+$ of $u$ on (i) the total boundaries $\partial^* F(w_1)$ and $\partial^* F(w_2)$ of the two 1-neighboring fibers $F(w_1)$ and $F(w_2)$ of $F(x)$.
The part $\L_i^{\lleft}$ contains (1) an exact distance labeling
$\LDt{u_1^+}{\partial^* F(w_1)} =: \L_i^{\lleft[\repD]}(u)$ of $u_1^+$ in the
starshaped tree $\partial^* F(w_1)$ (the \dls described in
\cite{FrGaNiWe_trees}, for  example) and (2)  the distance $\dist_G(u,u_1^+)
=: \L_i^{\lleft[\distance]}(u)$ between $u$ and $u_1^+$ in $G$. Finally, the 
part $\L_i^{\rright}$ is the same as $\L_i^{\lleft}$ with $u_2^+$ replacing 
$u_1^+$ and $\partial^* F(w_2)$ instead of $\partial^* F(w_1)$.

\scalebox{0.85}{\begin{algorithm}[H]
    \caption{\label{alg_distencBri}\distEncBri[nolink]($G$, $\L(V)$)}
    \Input{
        A $K_4$-free bridged graph $G = (V,E)$, and a list $\L(V) := \{
        \L(u) := \L_0(u) = (\id(u)) : u \in V \}$ of unique identifiers of its vertices.
    }

    \BlankLine
    \lIf{$V = \{v\}$}{ \Stop }

    \BlankLine
    Find a median vertex $m$ of $G$ \;
    $\Ls{\St(m)}{\St(m)}$ $\leftarrow$ \encStarBri($\St(m)$) \;
    \ForEach{panel $F(x) \in {\mathcal F}_m$}{
        $\LDt{\partial^* F(x)}{\partial^* F(x)}$ $\leftarrow$
        \distEncTree($\partial^* F(x)$)\;

        \ForEach{$u \in F(x)$}{
            Find the exits $u_1$ and $u_2$ of $u$ on $\partial^* F(x)$ \;

            $L^\st \leftarrow
                (\id(m), ~\dist_G(u,m), ~\Ls{x}{\St(m)})$
            \tcp*{
                $L^\st \leftarrow
                    (L^{\st[\med]}, L^{\st[\distance]}, L^{\st[\rootS]})$
            }
            $L^{\lleft} \leftarrow
                (\LDt{u_1}{\partial^* F(x)}, ~\dist_G(u,u_1))$
            \tcp*{
                $L^\lleft \leftarrow
                    (L^{\lleft[\repD]}, L^{\lleft[\distance]})$
            }
            $L^{\rright} \leftarrow
                (\LDt{u_2}{\partial^* F(x)}, ~\dist_G(u,u_2))$
            \tcp*{
                $L^\rright \leftarrow
                (L^{\rright[\repD]}, L^{\rright[\distance]})$
            }
            $\L(u) \leftarrow
            \L(u) \conc (L^\st, ~L^{\lleft}, ~L^{\rright})$ \;
        }
        \distEncBri[]($G[F(x)]$, $\L(F(x))$) \;
    }
    \ForEach{cone $F(x) \in {\mathcal F}_m$}{
        \ForEach{$u \in F(x)$}{
            Find $w_1, w_2 \in \St(m)$ s.t. $F(w_1)$ and $F(w_2)$ are the
            two panels 1-neighboring $F(x)$ \;
            Let $u^+_1$ and $u^+_2$ be the two entrances  of $u$ on $F(w_1)$ and
            $F(w_2)$ \;

            $L^\st \leftarrow
                (\id(m), ~\dist_G(u,m), ~\Ls{x}{\St(m)})$
            \tcp*{
                $L^\st \leftarrow
                (L^{\st[\med]}, L^{\st[\distance]}, L^{\st[\rootS]})$
            }
            $L^{\lleft} \leftarrow
                (\LDt{u^+_1}{\partial^* F(w_1)}, ~\dist_G(u,u^+_1))$
            \tcp*{
                $L^\lleft \leftarrow
                (L^{\lleft[\repD]}, L^{\lleft[\distance]})$
            }
            $L^{\rright} \leftarrow
                (\LDt{u^+_2}{\partial^* F(w_2)}, ~\dist_G(u,u^+_2))$
            \tcp*{
                $L^\rright \leftarrow
                (L^{\rright[\repD]}, L^{\rright[\distance]})$
            }

            $\L(u) \leftarrow
            \L(u) \conc (L^\st, ~L^{\lleft}, ~L^{\rright})$ \;
        }
        \distEncBri[]($G[F(x)]$, $\L(F(x))$) \;
    }
\end{algorithm}}

\subsection{Distance queries}\label{sect_distdec_bridged}
Given the labels $\L(u)$ and $\L(v)$ of two vertices $u$ and $v$,
the distance decoder (see \distDecBri below) starts by determining the state of 
the pair $(u,v)$.
To do so, it looks up for the first median $m$ that separates $u$ and $v$, i.e., such that $u$ and $v$  belong to
distinct fibers with respect to $\St(m)$. More precisely, it looks for the part $i$ of the labels
corresponding to the step in which $m$ became a median.
As noticed in \cite[Section 6.4.6]{ChLaRa_labeling_median}, it is possible to
find this
median vertex $m$ in constant time by adding particular $O(\log^2 n)$ bits
information to the head of each label (consisting of a lowest common ancestor
scheme defined on the tree of median vertices). Once the right parts of label
are found, the decoding function determines that two vertices are
1pc-neighboring
if and only if the identifier (i.e., the star-label in $\St(m)$ of the fiber of
one of the two vertices $u,v$ is strictly included in the identifier of the other).
In that case, the decoding function calls a procedure based on Lemma
\ref{lem_dist_1pc-neighboring} (see \DistPCNeighboring below). 
More precisely, the procedure returns
$$
\min \{
    \dist_G(u,u_1) + \dist_T(u_1,v'),
    \dist_G(u,u_2) + \dist_T(u_2,v')
\} + \dist_G(v',v),
$$ where we assume that $u$ belongs to a panel (and $v$
belongs to a cone), where $u_1$, $u_2$ are contained in the label
parts $\L_i^{\rright}(u)$, $\L_i^{\lleft}(u)$ and $v'$ is contained in the label
part $\L_i^{\rright}(v)$ or $\L_i^{\lleft}(v)$. The distances $\dist_T(u_1,v')$ and
$\dist_T(u_2,v')$ are obtained by decoding the tree distance labels of $u_1$,
$u_2$, and $v'$ in $T$ (also available in these label parts). We also point out
that we assume that $\L_i^{\lleft}(v)$ always contains the
information to get to the panel whose identifier corresponds to the minimum of
the two values identifying the cone of $v$.
The vertices $u$ and $v$ are classified as 2cc-neighboring if and only if
the identifier of their respective fibers intersect in a singleton. In that
case, \distDecBri[nolink] calls the procedure \DistCCNeighboring, based on
Lemma \ref{lem_dist_2cc-neighboring}.
In all the remaining cases (i.e., when $u$ and $v$ are separated or almost
separated), the decoding algorithm will return $\dist_G(u,m) +
\dist_G(v,m)$. By Lemmas \ref{lem_separated_vertices_dim2_bridged} and
\ref{lem_quasi-separated_vertices_dim2_bridged}, this sum is sandwiched between
$\dist_G(u,v)$ and $2\cdot \dist_G(u,v)$.
We now give the two main procedures used by the decoding algorithm
\distDecBri[nolink]: \DistPCNeighboring and \DistCCNeighboring. Note that
\DistPCNeighboring assumes that its first argument $u$ belongs to a panel and
the second $v$ belongs to a cone.

\smallskip
\scalebox{0.85}{\begin{myFunction}
    \Fn{\DistPCNeighboring{$\L_i(u)$, $\L_i(v)$}}{
        $\dir \leftarrow \rright$
        \tcp*[l]{
            If $\L_i^{\st[\rootS]}(u) = \max\{j : j \in
            \L_i^{\st[\rootS]}(v)\}$
        }
        \lIf{$\L_i^{\st[\rootS]}(u) = \min\{j : j \in
            \L_i^{\st[\rootS]}(v)\}$}{
            $\dir \leftarrow \lleft$
        }
        $v' \leftarrow \L_i^{\dir[\repD]}(v)$ \;
        $u_1, ~u_2 \leftarrow
             \L_i^{\lleft[\repD]}(u),
            ~\L_i^{\rright[\repD]}(u)$ \;
        $d_1, ~d_2 \leftarrow
             \distDecTree(v', u_1),
            ~\distDecTree(v', u_2)$ \;
        \Return $\L_i^{\dir[\distance]}(v)
            + \min
            \left\{
                 d_1 + \L_i^{\lleft[\distance]}(u),
                ~d_2 + \L_i^{\rright[\distance]}(u)
            \right\}$ \;	
    }
\end{myFunction}}

Recall that in the following procedure, $u$ and $v$ both belong to cones at
step $i$.

\smallskip
\scalebox{0.85}{\begin{myFunction}
    \Fn{\DistCCNeighboring{$\L_i(u)$, $\L_i(v)$}}{
        $\{x\} \leftarrow \L_i^{\st[\rootS]}(u) \cap \L_i^{\st[\rootS]}(v)$ \;
        $\dirU, ~\dirV \leftarrow \rright, ~\rright$ \;
        \lIf{$x = \min \{ j : j \in \L_i^{\st[\rootS](u)} \}$ }{
            $\dirU \leftarrow \lleft$
        }
        \lIf{$x = \min \{ j : j \in \L_i^{\st[\rootS](v)} \}$ }{
            $\dirV \leftarrow \lleft$
        }
        $u', ~v' \leftarrow
             \L_i^{\dirU[\repD]}(u),
            ~\L_i^{\dirV[\repD]}(v)$ \;
        $d_u, ~d_v \leftarrow
             \L_i^{\dirU[\distance]}(u),
            ~\L_i^{\dirV[\distance]}(v)$ \;
        $d \leftarrow \distDecTree(u', v')$ \;
        \Return $d_u + d + d_v$ \;
    }
\end{myFunction}}

The following algorithm \distDecBri[nolink] finds the first step where the
given vertices $u$ and $v$ have belonged to distinct fibers for the first time.
If they are 1pc-neighboring or 2cc-neighboring at this step, then
\distDecBri[nolink] respectively calls procedure \DistPCNeighboring or
\DistCCNeighboring. Otherwise, it returns the sum of their distances to the
median of the step. The occurring cases are illustrated by 
Figure~\ref{fig_decoding}.

\smallskip
\scalebox{0.85}{\begin{algorithm}[H]
        \caption{\label{alg_distdecBri}\distDecBri[nolink]($\L(u)$,
            $\L(v)$)}
        \Input{The labels $\L(u)$ and $\L(v)$ of two vertices $u$ and $v$ of
            $G$}
        \Output{A value between $\dist_G(u,v)$ and $4 \cdot \dist_G(u,v)$}

        \BlankLine
        \lIf{$\L_0(u) = \L_0(v)$ \tcc*[h]{$u = v$}}{
            \Return $0$
        }

        \BlankLine
        Let $i$ be the greatest integer such that $\L_i^{\st[\med]}(u) =
        \L_i^{\st[\med]}(v)$ \;

        \tcp{If $u$ is in a panel 1-neighboring the cone of $v$}
        \If{$\L_i^{\st[\rootS]}(u) \subsetneq \L_i^{\st[\rootS]}(v)$}{
            \Return $\DistPCNeighboring(\L_i(u), \L_i(v))$ \;
        }
        \tcp{If $v$ is in a panel 1-neighboring the cone of $u$}
        \If{$\L_i^{\st[\rootS]}(v) \subsetneq \L_i^{\st[\rootS]}(u)$}{
            \Return $\DistPCNeighboring(\L_i(v), \L_i(u))$ \;
        }
        \tcp{If $u$ is in a cone 2-neighboring the cone of $v$}
        \If{$|\L_i^{\st[\rootS]}(u) \cap \L_i^{\st[\rootS]}(v)| = 1$
            \and $\L_i^{\st[\rootS]}(u) \ne \L_i^{\st[\rootS]}(v)$
        }{
            \Return $\DistCCNeighboring(\L_i(u), \L_i(v))$ \;
        }
        \tcp{In every other case}
        \Return $\L_i^{\st[\distance]}(u) + \L_i^{\st[\distance]}(v)$ \;
\end{algorithm}}

\begin{figure}[htb]
    \centering
    \includegraphics[width=0.6\linewidth]{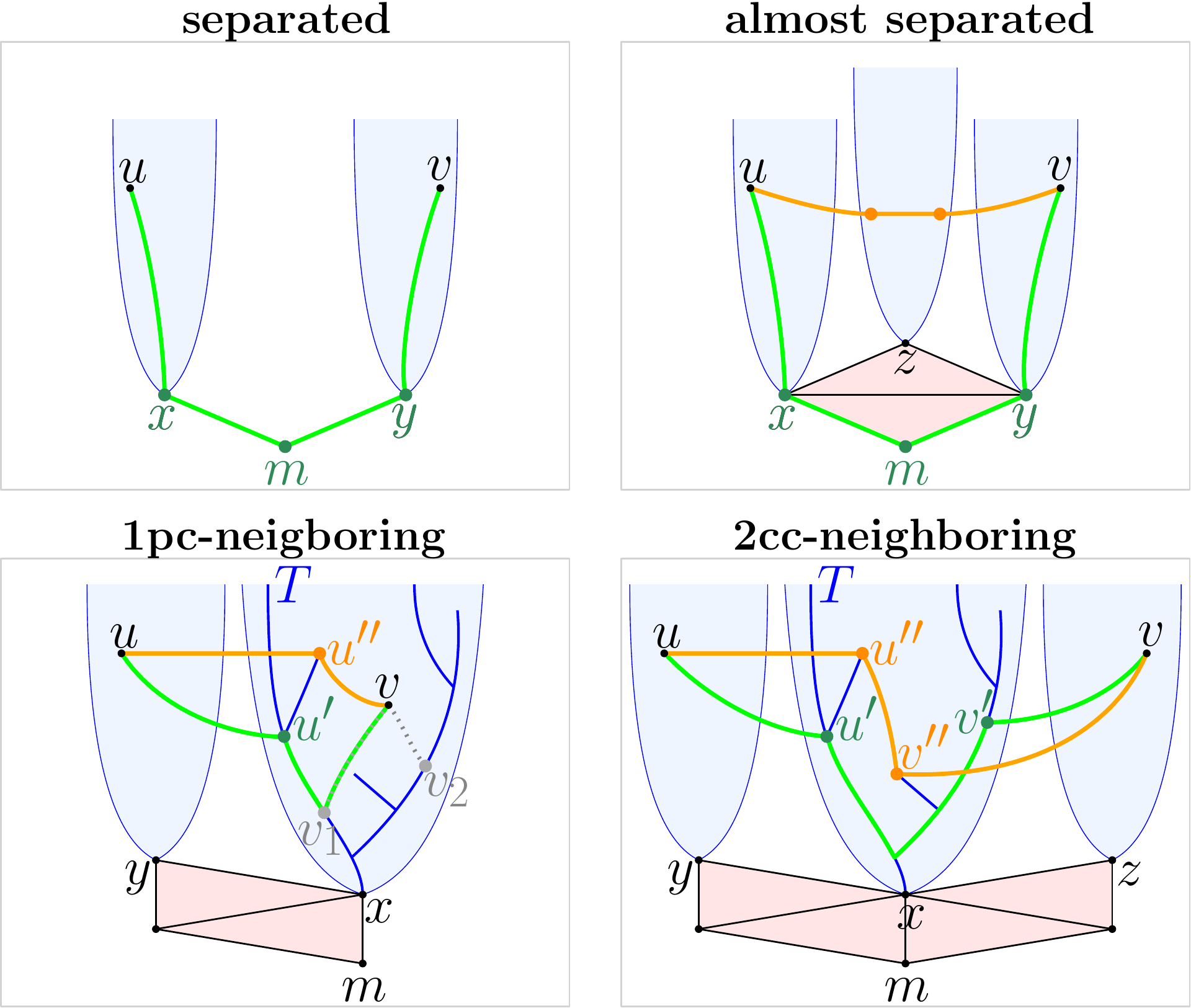}
    \caption{
        \label{fig_decoding}
        Decoding of the distances between separated, almost separated,
        1pc-neighboring, and 2cc-neighboring vertices. An exact shortest path
        is given in orange, and the path considered by our decoding algorithm
        appears in green.
    }
\end{figure}

\subsection{Correctness and complexity}\label{sect_complexite_distance_bridged}
Since by Lemma \ref{lem_small_fibers_dim2_bridged}
the number of vertices in every part is each time divided by 2, the recursion
depth is $O(\log n)$.
At each recursive step, the vertices add to their label a constant number of
information among which the longest consists in a distance labeling scheme for
trees using $O(\log^2 n)$ bits.
It follows that our scheme uses $O(\log^3 n)$ bits for each vertex.
We show, by induction on $n$, that the time complexity is $T(n) := a (n^2 \log
n + n)$ for some constant $a$.
This is trivially true for $n = 1$.
For a graph $G = (V,E)$ on $n$ vertices, we can compute a median vertex $m$ of
$G$, and its partition into fibers $F(x_1), \ldots, F(x_k)$ (with $n_1, \ldots,
n_k$ vertices respectively) in time $O(n^2)$ by using the distance matrix of
$G$.
A vertex $u$ belongs to the total boundary of its fiber $F(x_i)$ if one of its
neighbors belongs to another fiber. This can be determined in time $O(n)$.
Therefore,
the total boundary of all the fibers can be computed in time $O(n^2)$.
Let $u$ be a vertex of a panel $F(x_j)$, the interval $I(x_j, u)$ can be
computed in time $O(n)$, and thus the two exits of $u$ (i.e., the two extremal
vertices of $I(x_j, u) \cap \partial^* F(x_j)$) can also be computed in time
$O(n)$.
Let $v$ be a vertex of a cone $F(x_\ell)$. The projections $\mproj{v}{F}$ and
$\mproj{v}{F'}$ of $v$ on the two panels $1$-neighboring $F(x_\ell)$ can be
computed in time $O(n)$, hence the two entrances of $v$ (i.e., the roots of
$\mproj{v}{F} \cap \partial^* F$ and $\mproj{v}{F'} \cap \partial^* F'$) can
also be computed in time $O(n)$.
Consequently, the first step of the induction requires time $a n^2$.
By induction, computing the label of all $u$ in $F(x_i)$ takes time $T(n_i)$.
Therefore, $T(n) = an^2 + \sum_{i=1}^k T(n_i)$. By Lemma
\ref{lem_small_fibers_dim2_bridged}, $n_i \le n / 2$ for all $i \in
\{1,\ldots,k\}$. Consequently,
$$
\begin{array}{rcl}
  T(n) &=   &a n^2 + \sum_{i=1}^k (a n_i^2 \log n_i + n_i) \\
       &\le &a n^2 + a n^2 \log(\frac{n}{2}) + a n         \\
       &\le &a (n^2 \log n + n).
\end{array}
$$
We conclude that the total complexity of Algorithm \ref{alg_distencBri} is
$O(n^2 \log n)$.

That the decoding algorithm returns distances with a multiplicative error
at most $4$ directly follows from Lemmas
\ref{lem_separated_vertices_dim2_bridged} and
\ref{lem_quasi-separated_vertices_dim2_bridged} for separated and almost
separated vertices, and from Lemmas \ref{lem_dist_1pc-neighboring} and
\ref{lem_dist_2cc-neighboring} for the 1pc-neighboring and 2cc-neighboring
vertices. Those results are based on Lemmas \ref{lem_entrance} and
\ref{lem_exits} that respectively indicate the entrances and exits to store in
total boundaries of panels. This concludes the proof of Theorem
\ref{thm_dist_labeling_bridged}.

\section{Conclusion}

We would like to finish this paper with some open questions.
First of all, the problem of finding a polylogarithmic (approximate) distance
labeling scheme for general bridged graphs remains open. We formulate it in the
following way:

\begin{question}
    Do there exist constants $c$ and $b$ such that any bridged graph $G$
    admits a $c$-approximate distance labeling scheme with labels of size
    $O(\log^b n)$ ?
\end{question}

One of the first obstacles in adapting our labeling scheme to general bridged graphs is
that it is not clear how to define the star and the fibers. In all bridged graphs,
since the neighborhood $N[z]$ of a vertex $z$ is convex, the metric projection of
any vertex $u$ on $N[z]$ induces a clique. Therefore, we could define the fiber $F(C)$
for a  clique $C$ of $N[z]$ as the set of all $u$ having $C$ as metric
projection on $N[z]$. This induces a partitioning of $V(G) \setminus N[z]$,
however the
interaction between different fibers of $N[z]$ seems intricate.

The same question can be asked for bridged graphs of constant clique-size and
for hyperbolic bridged graphs (via a result of \cite{ChChHiOs}, those are the
bridged graphs in which all deltoids have
constant size). A positive result would be interesting since Gavoille and Ly
\cite{GaLy_hyperbolic} established that general graphs of bounded hyperbolicity
do not admit poly-logarithmic distance labeling schemes unless we allow a
multiplicative error of order $\Omega(\log \log n)$, at least.

\begin{question}
    Do there exist linear functions $f$ and $g$ such that every
    $\delta$-hyperbolic bridged graph $G$ admits a $f(\delta)$-approximate distance
    labeling scheme with labels of size $O(\log^{g(\delta)} n)$ ?
\end{question}

In the full version of \cite{ChLaRa_labeling_median}, we managed to
encode the cube-free median graphs in $O(n \log n)$ time instead of $O(n^2 \log
n)$. This improvement uses a recent result of \cite{BeChChVa_median}
allowing to compute a median vertex of a median graph in linear time. We also
compute in cube-free median graphs the partition into fibers, the gates
(equivalent of the entrances) and the  imprints (equivalent to the exits) in
fibers in linear time, with a BFS-like algorithm. Altogether,  this improvement
of the preprocessing time for cube-free median graphs was technically
non-trivial.
In the case of $K_4$-free bridged graph, a similar result can be expected.
The first step will be to design a linear-time algorithm for computing
medians in $K_4$-free bridged graphs. For planar $K_4$-free bridged graphs, such an algorithm was
described in \cite{ChFaVa}.

\subsubsection*{Acknowledgment} We would like to acknowledge the referees for their very careful reading of the manuscript, numerous insightful comments, and useful suggestions. This work  was supported by ANR project DISTANCIA
(ANR-17-CE40-0015).

\bibliographystyle{plainurl}
\bibliography{biblio-bridged}

\begin{thebibliography}{10}

\bibitem{AnFa}
R.P Anstee and M~Farber.
\newblock On bridged graphs and cop-win graphs.
\newblock {\em Journal of Combinatorial Theory, Series B}, 44(1):22--28, 1988.
\newblock URL:
  \url{https://www.sciencedirect.com/science/article/pii/0095895688900937},
  \href {http://dx.doi.org/https://doi.org/10.1016/0095-8956(88)90093-7}
  {\path{doi:https://doi.org/10.1016/0095-8956(88)90093-7}}.

\bibitem{BaCh_Helly}
H.-J. Bandelt and V.~Chepoi.
\newblock A {H}elly theorem in weakly modular space.
\newblock {\em Discrete Math.}, 160(1-3):25--39, 1996.

\bibitem{BaCh_wma}
H.-J. Bandelt and V.~Chepoi.
\newblock The algebra of metric betweenness ii: axiomatics of weakly median
  graphs.
\newblock {\em Europ. J. Combin.}, 29:676--700, 2008.

\bibitem{BaCh_survey}
H.-J. Bandelt and V.~Chepoi.
\newblock Metric graph theory and geometry: a survey.
\newblock {\em Contemporary Mathematics}, 453:49--86, 2008.

\bibitem{BaCh_dwmg}
Hans{-}J{\"{u}}rgen Bandelt and Victor Chepoi.
\newblock Decomposition and\emph{l}\({}_{\mbox{1}}\)-embedding of weakly median
  graphs.
\newblock {\em Eur. J. Comb.}, 21(6):701--714, 2000.
\newblock URL: \url{https://doi.org/10.1006/eujc.1999.0377}, \href
  {http://dx.doi.org/10.1006/eujc.1999.0377}
  {\path{doi:10.1006/eujc.1999.0377}}.

\bibitem{BaGa_data_structures}
F.~Bazzaro and C.~Gavoille.
\newblock Localized and compact data-structure for comparability graphs.
\newblock {\em Discr. Math.}, 309:3465--3484, 2009.

\bibitem{BeChChVa_median}
L.~B{\'e}n{\'e}teau, J.~Chalopin, V.~Chepoi, and Y.~Vax{\`e}s.
\newblock Medians in median graphs and their cube complexes in linear time.
\newblock {\em J. Comput. Syst. Sci.}, 126:80--105, 2022.

\bibitem{ChChHiOs}
J.~Chalopin, V.~Chepoi, H.~Hirai, and D.~Osajda.
\newblock Weakly modular graphs and nonpositive curvature.
\newblock {\em Memoirs of AMS}, 268(1309):159 pp., 2020.

\bibitem{Chepoi_metric_triangles}
V.~Chepoi.
\newblock Classification of graphs by means of metric triangles.
\newblock {\em Metody Diskret. Analiz.}, pages 75--93, 96, 1989.

\bibitem{Ch_CAT}
V.~Chepoi.
\newblock Graphs of some {CAT(0)} complexes.
\newblock {\em Adv. Appl. Math.}, 24:125--179, 2000.

\bibitem{ChDrVa_labeling}
V.~Chepoi, F.~F. Dragan, and Y.~Vax{\`{e}}s.
\newblock Distance and routing labeling schemes for non-positively curved plane
  graphs.
\newblock {\em J. Algorithms}, 61:60--88, 2006.

\bibitem{Ch_bridged}
Victor Chepoi.
\newblock Bridged graphs are cop-win graphs: an algorithmic proof.
\newblock {\em J. Combin. Theory Ser. B}, 69(1):97--100, 1997.

\bibitem{ChDrVa_SODA}
Victor Chepoi, Feodor~F. Dragan, and Yann Vax{\`{e}}s.
\newblock Center and diameter problems in plane triangulations and
  quadrangulations.
\newblock In David Eppstein, editor, {\em Proceedings of the Thirteenth Annual
  {ACM-SIAM} Symposium on Discrete Algorithms, January 6-8, 2002, San
  Francisco, CA, {USA}}, pages 346--355. {ACM/SIAM}, 2002.
\newblock URL: \url{http://dl.acm.org/citation.cfm?id=545381.545427}.

\bibitem{ChDrVa2005}
Victor Chepoi, Feodor~F. Dragan, and Yann Vax{\`{e}}s.
\newblock Distance-based location update and routing in irregular cellular
  networks.
\newblock In Lawrence Chung and Yeong{-}Tae Song, editors, {\em Proceedings of
  the 6th {ACIS} International Conference on Software Engineering, Artificial
  Intelligence, Networking and Parallel/Distributed Computing {(SNPD} 2005),
  May 23-25, 2005, Towson, Maryland, {USA}}, pages 380--387. {IEEE} Computer
  Society, 2005.
\newblock URL: \url{https://doi.org/10.1109/SNPD-SAWN.2005.32}, \href
  {http://dx.doi.org/10.1109/SNPD-SAWN.2005.32}
  {\path{doi:10.1109/SNPD-SAWN.2005.32}}.

\bibitem{ChDrVa_wn}
Victor Chepoi, Feodor~F. Dragan, and Yann Vax{\`{e}}s.
\newblock Addressing, distances and routing in triangular systems with
  applications in cellular networks.
\newblock {\em Wirel. Networks}, 12(6):671--679, 2006.
\newblock URL: \url{https://doi.org/10.1007/s11276-006-6527-0}, \href
  {http://dx.doi.org/10.1007/s11276-006-6527-0}
  {\path{doi:10.1007/s11276-006-6527-0}}.

\bibitem{ChFaVa}
Victor Chepoi, Cl{\'{e}}mentine Fanciullini, and Yann Vax{\`{e}}s.
\newblock Median problem in some plane triangulations and quadrangulations.
\newblock {\em Comput. Geom.}, 27(3):193--210, 2004.
\newblock URL: \url{https://doi.org/10.1016/j.comgeo.2003.11.002}, \href
  {http://dx.doi.org/10.1016/j.comgeo.2003.11.002}
  {\path{doi:10.1016/j.comgeo.2003.11.002}}.

\bibitem{ChLaRa_labeling_median}
Victor Chepoi, Arnaud Labourel, and S{\'{e}}bastien Ratel.
\newblock Distance and routing labeling schemes for cube-free median graphs.
\newblock {\em Algorithmica}, 83(1):252--296, 2021.
\newblock URL: \url{https://doi.org/10.1007/s00453-020-00756-w}, \href
  {http://dx.doi.org/10.1007/s00453-020-00756-w}
  {\path{doi:10.1007/s00453-020-00756-w}}.

\bibitem{ChepoiOsajda}
Victor Chepoi and Damian Osajda.
\newblock Dismantlability of weakly systolic complexes and applications.
\newblock {\em Trans. Amer. Math. Soc.}, 367(2):1247--1272, 2015.
\newblock URL: \url{http://dx.doi.org/10.1090/S0002-9947-2014-06137-0}, \href
  {http://dx.doi.org/10.1090/S0002-9947-2014-06137-0}
  {\path{doi:10.1090/S0002-9947-2014-06137-0}}.

\bibitem{CoVa_bounded_cw}
B.~Courcelle and R.~Vanicat.
\newblock Query efficient implementation of graphs of bounded clique-width.
\newblock {\em Discrete Appl. Math.}, 131:129--150, 2003.

\bibitem{Elsner2009-isometries}
Tomasz Elsner.
\newblock Isometries of systolic spaces.
\newblock {\em Fund. Math.}, 204(1):39--55, 2009.
\newblock URL: \url{http://dx.doi.org/10.4064/fm204-1-3}, \href
  {http://dx.doi.org/10.4064/fm204-1-3} {\path{doi:10.4064/fm204-1-3}}.

\bibitem{FaJa_convexity}
M.~Farber and R.~E. Jamison.
\newblock On local convexity in graphs.
\newblock {\em Discr. Math.}, 66(3):231--247, 1987.

\bibitem{FrGaNiWe_trees}
O.~Freedman, P.~Gawrychowski, P.~K. Nicholson, and Oren Weimann.
\newblock Optimal distance labeling schemes for trees.
\newblock In {\em PODC}, pages 185--194. ACM, 2017.

\bibitem{GaKaKaPaPe_approx}
C.~Gavoille, M.~Katz, N.~A. Katz, C.~Paul, and D.~Peleg.
\newblock Approximate distance labeling schemes.
\newblock In {\em ESA}, pages 476--487. Springer, 2001.

\bibitem{GaLy_hyperbolic}
C.~Gavoille and O.~Ly.
\newblock Distance labeling in hyperbolic graphs.
\newblock In {\em International Symposium on Algorithms and Computation}, pages
  1071--1079. Springer, 2005.

\bibitem{GaPa_decomposition}
C.~Gavoille and C.~Paul.
\newblock Distance labeling scheme and split decomposition.
\newblock {\em Discr. Math.}, 273:115--130, 2003.

\bibitem{GaPa_interval_graphs}
C.~Gavoille and C.~Paul.
\newblock Optimal distance labeling for interval graphs and related graph
  families.
\newblock {\em SIAM J. Discr. Math.}, 22:1239--1258, 2008.

\bibitem{GaPePeRa_distance}
C.~Gavoille, D.~Peleg, S.~P{\'{e}}renn{\`{e}}s, and R.~Raz.
\newblock Distance labeling in graphs.
\newblock {\em J. Algorithms}, 53:85--112, 2004.

\bibitem{GaUz_distance}
P.~Gawrychowski and P.~Uznanski.
\newblock A note on distance labeling in planar graphs.
\newblock {\em CoRR}, abs/1611.06529, 2016.
\newblock \href {http://arxiv.org/abs/1611.06529} {\path{arXiv:1611.06529}}.

\bibitem{GeSh}
S.M. Gersten and H.B. Short.
\newblock Small cancellation theory and automatic groups.
\newblock {\em Invent. Math.}, 102:305--334, 1990.

\bibitem{Gr}
Misha Gromov.
\newblock Hyperbolic groups.
\newblock In {\em Essays in group theory}, volume~8 of {\em Math. Sci. Res.
  Inst. Publ.}, pages 75--263. Springer, New York, 1987.
\newblock URL: \url{https://doi.org/10.1007/978-1-4613-9586-7_3}, \href
  {http://dx.doi.org/10.1007/978-1-4613-9586-7_3}
  {\path{doi:10.1007/978-1-4613-9586-7_3}}.

\bibitem{Haglund}
Fr\'ed\'eric Haglund.
\newblock Complexes simpliciaux hyperboliques de grande dimension.
\newblock {\em Prepublication Orsay}, (71), 2003.

\bibitem{HoOs}
N.~Hoda and D.~Osajda.
\newblock Two-dimensional systolic complexes satisfy property a.
\newblock {\em Internat. J. Algebra Comput.}, 28:1247--1254, 2018.

\bibitem{HuOs_ms}
Jingyin Huang and Damian Osajda.
\newblock Metric systolicity and two-dimensional {A}rtin groups.
\newblock {\em Math. Ann.}, 374(3-4):1311--1352, 2019.

\bibitem{HuOs_Artin}
Jingyin Huang and Damian Osajda.
\newblock Large-type {A}rtin groups are systolic.
\newblock {\em Proc. Lond. Math. Soc.}, 120(1):95--123, 2020.

\bibitem{JaSw}
T.~Januszkiewicz and J.~{\'{S}}wi{\c{a}}tkowski.
\newblock Simplicial nonpositive curvature.
\newblock {\em Publications Math{\'e}matiques de l'Institut des Hautes
  {\'E}tudes Scientifiques}, 104(1):1--85, 2006.

\bibitem{JanuszkiewiczSwiatkowski2007}
Tadeusz Januszkiewicz and Jacek \'Swi{\c a}tkowski.
\newblock Filling invariants of systolic complexes and groups.
\newblock {\em Geom. Topol.}, 11:727--758, 2007.
\newblock URL: \url{http://dx.doi.org/10.2140/gt.2007.11.727}, \href
  {http://dx.doi.org/10.2140/gt.2007.11.727}
  {\path{doi:10.2140/gt.2007.11.727}}.

\bibitem{LySc_group_theory}
R.~C. Lyndon and P.~E Schupp.
\newblock {\em Combinatorial Group Theory}.
\newblock Springer, 2015.

\bibitem{OsaPrzy2009}
Damian Osajda and Piotr Przytycki.
\newblock Boundaries of systolic groups.
\newblock {\em Geom. Topol.}, 13(5):2807--2880, 2009.
\newblock URL: \url{https://doi.org/10.2140/gt.2009.13.2807}, \href
  {http://dx.doi.org/10.2140/gt.2009.13.2807}
  {\path{doi:10.2140/gt.2009.13.2807}}.

\bibitem{Peleg00}
D.~Peleg.
\newblock {\em {D}istributed {C}omputing: {A} {L}ocality-{S}ensitive
  {A}pproach}.
\newblock SIAM, 2000.

\bibitem{Po_bridged1}
Norbert Polat.
\newblock On infinite bridged graphs and strongly dismantlable graphs.
\newblock {\em Discrete Math.}, 211(1-3):153–--166, 2000.

\bibitem{Po_bridged2}
Norbert Polat.
\newblock On isometric subgraphs of infinite bridged graphs and geodesic
  convexity.
\newblock {\em Discrete Math.}, 244(1-3):399–--416, 2002.

\bibitem{Prytula2017}
Tomasz Prytu{\l}a.
\newblock Hyperbolic isometries and boundaries of systolic complexes.
\newblock {\em arXiv preprint arXiv:1705.01062}, 2017.

\bibitem{Pr3}
Piotr Przytycki.
\newblock The fixed point theorem for simplicial nonpositive curvature.
\newblock {\em Mathematical Proceedings of the Cambridge Philosophical
  Society}, 144:683 -- 695, 2008.
\newblock \href {http://dx.doi.org/10.1017/S0305004107000989}
  {\path{doi:10.1017/S0305004107000989}}.

\bibitem{Ratel}
S.~Ratel.
\newblock {\em Densit\'e, {V}{C}-dimension et \'etiquetages de graphes}.
\newblock Aix-Marseille Universit\'e, 2019.

\bibitem{SoCh_convexification}
V.~Soltan and V.~Chepoi.
\newblock Conditions for invariance of set diameters under $d$-convexification
  in a graph.
\newblock {\em Cybernetics}, 19(6):750--756, 1983.

\bibitem{Thorup_approx_distance}
M.~Thorup.
\newblock Compact oracles for reachability and approximate distances in planar
  digraphs.
\newblock {\em J. of ACM}, 51:993--1024, 2004.

\bibitem{Weetman}
G.~M. Weetman.
\newblock A construction of locally homogeneous graphs.
\newblock {\em J. London Math. Soc. (2)}, 50:68--86, 1994.

\bibitem{Wise}
D.~T. Wise.
\newblock Sixtolic complexes and their fundamental groups.
\newblock 2003.

\end{thebibliography}

\newpage
\section*{Appendices}
\addcontentsline{toc}{section}{Appendices}
\setcounter{tocdepth}{0}
\appendix

\section{Glossary}
\label{ref_glo}
In the following glossary, $G$ denotes a graph of vertex set $V$ and edge set
$E$ ; unless it is explicitly defined otherwise, $T$ denotes a starshaped tree
rooted at $z$; $H = (V(H), E(H))$ denotes a subgraph of $G$.

\medskip

{
    \renewcommand{\arraystretch}{1.2}
    \begin{tabular}{p{0.33\linewidth}p{0.67\linewidth}}
        \hline
        \textbf{Notions and notations}
            & \textbf{Definitions}
            \\
        \hline
        Ball $B_k(S)$
            & $\{ x \in V : \exists s \in S, \dist_G(x,s) \le k \}$.
            \\
        Boundary $\partial_{y} F(x)$
            & $\{ u \in F(x) : \exists v \in F(y), uv \in E \}$.
            \\
        Closed neighborhood $N[u]$ of $u$
            & $\{ v \in V : uv \in E \} \cup \{u\}$.
            \\
        Cone $F(x)$ w.r.t. to $\St(z)$
            & Fiber $F(x)$ w.r.t. $\St(z)$ with $\dist_G(x,z) = 2$.
            \\
        Convex subgraph $H$
            & $\forall u, v \in V(H)$, $I(u,v) \subseteq V(H)$.
            \\
        Distance $\dist_G(u,v)$
            & Number of edges on a shortest $(u,v)$-path of $G$.
            \\
        Entrance of $u$ on $T := \partial^* F(x)$
            & Closest vertex to $x$ in $\mproj{u}{T}$.
            \\
        Exit of $u$ on $T := \partial^* F(x)$
            & Extremal vertex w.r.t. $z$ in an increasing path of $I(u,z) \cap
            T$.
            \\
        Extremal vertex of $I(u,z) \cap T$.
            & First convex corner in an increasing path starting at $z$.
            \\
        Fiber $F(x)$ w.r.t. $H$
            & $\{u \in V : x \text{ is the gate of $u$ in } H \}$.
            \\
        Increasing path $P$ of $T$
            & Path entirely contained in a single branch of $T$.
            \\
        Interval $I(u,v)$
            & $\{ w \in V : \dist_G(u,v) = \dist_G(u,w) + \dist_G(w,v) \}$.
            \\
        Isometric subgraph $H$
            & $\forall u, v \in V(H), \dist_H(u,v) = \dist_G(u,v)$.
            \\
        Locally convex subgraph $H$
            & $\forall u, v \in V(H)$, with $\dist_G(u,v) \le 2$, $I(u,v)
            \subseteq V(H)$.
            \\
        Median vertex $m$
            & Vertex minimizing $u \mapsto \sum_{v \in V} \dist_G(u,v)$.
            \\
        Metric projection $\mproj{x}{H}$
            & $\{ u \in V(H) : \forall v \in V(H), \dist_G(v,x) \ge
            \dist_G(u,x)
            \}$.
            \\
        Metric triangle $u_1u_2u_3$ of $G$
            & $u_1, u_2, u_3 \in V$ s.t. $\forall i, j, k \in
            \{1,2,3\}$, $I(u_i,u_j) \cap I(u_j, u_k) = \{u_j\}$.
            \\
        Panel $F(x)$ w.r.t. to $\St(z)$
            & Fiber $F(x)$ w.r.t. $\St(z)$ with $\dist_G(x,z) = 1$.
            \\
        Star $\St(z)$
            & Union of $N[z]$ and of all the triangles derived from $N[z]$
            using (TC).
            \\
        Starshaped $H$ w.r.t. $z$
            & $\forall u \in V(H), I(u,z) \subseteq V(H)$.
            \\
        Starshaped tree $T \subseteq G$ w.r.t. $z$
            & $\forall u \in V(T)$, $I(u,z)$ is a path of $T$.
            \\
        Total boundary $\partial^* F(x)$
            & $\bigcup_{y \sim x} \partial_{y} F(x)$.
            \\
        Triangle condition (TC)
            & $\forall u,v,w \in V$ with $k := \dist_G(u,v) = \dist_G(u,w)$,
            and $vw \in E$, $\exists x \in V$ s.t. $\dist_G(u,x) = k - 1$
            and $xv, xw \in E$.
            \\
        \hline
    \end{tabular}
}

\end{document}